\newcommand{\sredm}[1]{\ifmmode\text{\xout{\ensuremath{\displaystyle \textcolor{red}{#1}}}}\else\sout{\textcolor{red}{#1}}\fi}
\definecolor{darkgreen}{rgb}{0,0.5,0}
\newcommand{\kibitz}[2]{\ifnum\Comments=1\textcolor{#1}{#2}\fi}
\numberwithin{equation}{section}
\theoremstyle{definition}
\newtheorem{theorem}{Theorem}[section]
\newtheorem{lemma}{Lemma}[section]
\newtheorem{proposition}{Proposition}[section]
\newtheorem{corollary}{Corollary}[section]
\newtheorem{definition}{Definition}[section]
\newtheorem{remark}{Remark}[section]
\newcommand{\noi}{\noindent}
\newcommand{\fra}{\mathfrak{a}}
\newcommand{\E}{\mathbb{E}}
\newcommand{\R}{\mathbb{R}}
\newcommand{\N}{\mathbb{N}}
\newcommand{\eps}{\varepsilon}
\newcommand{\lam}{\lambda}
\newcommand{\vr}{\varrho}
\newcommand{\al}{\alpha}
\newcommand{\Ups}{\mathnormal{\Upsilon}}
\newcommand{\EE}{{\mathbb E}}
\newcommand{\PP}{{\mathbb P}}
\newcommand{\calA}{{\mathcal A}}
\newcommand{\calC}{{\mathcal C}}
\newcommand{\calF}{{\mathcal F}}
\newcommand{\calK}{{\mathcal K}}
\newcommand{\calL}{{\mathcal L}}
\newcommand{\calN}{{\mathcal N}}
\newcommand{\calO}{{\mathcal O}}
\newcommand{\calP}{{\mathcal P}}
\newcommand{\calQ}{{\mathcal Q}}
\newcommand{\calS}{{\mathcal S}}
\newcommand{\calU}{{\mathcal U}}
\newcommand{\calV}{{\mathcal V}}
\newcommand{\calX}{{\mathcal X}}
\newcommand{\scrL}{\mathscr{L}}
\newcommand{\skp}{\vspace{\baselineskip}}
\newcommand\iy{\infty}
\newcommand{\A}{\mathbb{A}}
\newcommand{\1}{\boldsymbol{\mathbbm{1}}}
\newcommand{\bx}{\boldsymbol{x}}
\newcommand{\bX}{\boldsymbol{X}}
\newcommand{\by}{\boldsymbol{y}}
\newcommand{\bal}{\boldsymbol{\alpha}}
\newcommand{\bbeta}{\boldsymbol{\beta}}
\newcommand{\blue}{\textcolor{black}}
\DeclareMathOperator*{\argmin}{arg\,min}
\title{Asymptotic Nash Equilibria of Finite-State Ergodic Markovian Mean Field Games}
\author{ 
Asaf Cohen}\address{Department of Mathematics\\
University of Michigan\\
Ann Arbor, MI 48109\\
United States
}
\email{shloshim@gmail.com}
\author{Ethan Zell}
\address{Department of Mathematics\\
University of Michigan\\
Ann Arbor, MI 48109\\
United States
}
\email{ezell@umich.edu}
\thanks{* This is the final version of the paper. To appear in {\it Mathematics of Operations Research
}.}
\def\namedlabel#1#2{\begingroup
    #2%
    \def\@currentlabel{#2}%
    \phantomsection\label{#1}\endgroup
}
\begin{document}
\maketitle

\begin{abstract}
Mean field games (MFGs) model equilibria in games with a continuum of weakly interacting players as limiting systems of symmetric $n$-player games. We consider the finite-state, infinite-horizon problem with ergodic cost. Assuming Markovian strategies, we first prove that any solution to the MFG system gives rise to a $(C/\sqrt{n})$-Nash equilibrium in the $n$-player game. We follow this result by proving the same is true for the strategy profile derived from the master equation. We conclude the main theoretical portion of the paper by establishing a large deviation principle for empirical measures associated with the asymptotic Nash equilibria. Then, we contrast the asymptotic Nash equilibria using an example. We solve the MFG system directly and numerically solve the ergodic master equation by adapting the deep Galerkin method of Sirignano and Spiliopoulos \cite{MR3874585}. We use these results to derive the strategies of the asymptotic Nash equilibria and compare them. Finally, we derive an explicit form for the rate functions in dimension two.
\end{abstract}
\skp
\noi{\bf Keywords:} Mean field games, master equation, ergodic games, Markov chains, large deviations.

\noi{\bf AMS Classification:} 
49N80, 
91A06, 
60J27, 
35Q89,  	
68T07,  	
49L12,  	
49N10. 	

 \tableofcontents

\section{Introduction}\label{sec:1}

\section{Introduction}
\vspace{-15mm}

\subsection{Focus of the paper}
We consider a finite-state, ergodic mean field game (MFG) to model a large interacting system of symmetric agents using Markovian strategies. In such games, a \textit{representative player} moves according to her jump process on a finite state-space $[d] := \{1,\dots, d\}$ while subject to an ergodic cost that depends on a flow of measures. The representative player chooses a control to minimize her cost, \blue{and the control she chooses is the transition rate between states}, and hence the distributional law of her state dynamics as well. Her distributional law, in turn, is \textit{another} flow of measures over the states $[d]$. When the representative player's law matches the original measure flow she responded to, we refer to the control and the flow as a \textit{mean field equilibrium} (MFE). The  representative player's ergodic cost is given by:
\begin{equation}\label{eqn:MFG_cost}
J_0(\al,\mu):=\limsup_{T\to\iy}\frac{1}{T}\E\Big[\int_0^T \left\{f(\calX_t^\al, \al(t, \calX_t^\al))+F(\calX_t^\al,\mu(t))\right\}dt\Big],
\end{equation} where $(\calX_t^\al)_{t\ge0}$ is the position of the representative player using her chosen control $\al$, \blue{$\al(t,x)=(\al_y(t,x))_{y\in[d]}$ stands for the transition rate vector from state $x$ to $y$}, $f+F$ is the running cost, 
and $(\mu(t))_{t\geq 0}$ is a flow of distributions over $[d]$. In these terms, we have an MFE when: (1) the distributional law of the player under control $\al$, $\calL(\calX_t^\al)$, equals $\mu(t)$ and (2) $\al$ minimizes $J_0(\cdot, \mu)$.

In a typical $n$-player game, by contrast, players react to the dynamics of others, and when no player has a cost incentive to change their strategy, we say the profile of all strategies constitutes a \textit{Nash equilibrium}. In a \textit{symmetric} Nash equilibrium, a given player's strategy will mirror that of any other player, rendering players indistinguishable from a population perspective. While explicitly solving for Nash equilibria in finite-player games can be computationally intractable, the relationship between MFGs and $n$-player games provides one avenue to approximate Nash equilibria for large, finite-player games. Specifically, when the number of players $n$ is large, we expect that the dynamics of the representative player in the MFG are close to the dynamics of any indistinguishable player from the $n$-player game. Denote the set of all players $[n] := \{1,\dots, n\}$. The cost to player $i \in [n]$ is:
\begin{align}\label{eqn:n_cost}
J^{n,i}_0(\bx,\bal):=\limsup_{T\to\infty} \frac{1}{T}\E \Big[\int_0^T \left\{f(X^{\bal, i}_t, \al^i(t, \bX_t^{\bal}))+F(X^{\bal, i}_t, \mu^{\bal, i}_t)\right\}dt\Big],
\end{align} 
where now $\bal := (\al^1, \dots, \al^n)$ is the collection of strategies for all players, $(\bX_t^{\bal} := (X^{\bal, 1}_t, \dots, X^{\bal, n}_t))_{t\ge0}$ are the players' positions, $(\mu^{\bal, i}_t))_{t\ge0}$ is the empirical distribution of all players $\bX^{\bal}$ without the $i$-th player, and $\bx$ is the initial state of the players, $\bX^{\bal}_0 = \bx \in [d]^n$. With respect to a Nash equilibrium, note that the $i$-th player minimizing $J_0^{n,i}$ can only select her own strategy $\al^i$.

One way to characterize the MFG and $n$-player relationship then, and a focus of this paper, is whether the controls from the MFE result in an approximate Nash equilibrium when applied in the finite-player game. That is, when all $n$ players are using the same strategies derived by the MFE, a single player who switches strategies receives, at most, a marginal benefit that vanishes as the number of players increases. Since the potential benefit of switching strategies decreases as the number of players increases, we refer to such strategy profiles as \textit{asymptotic Nash equilibria}. 

Typically, an ergodic MFE is characterized by a \textit{MFG system} that is, in general, a coupled system of differential equations. One equation is Kolmogorov's equation that, evolving forward in time, describes the distribution of the representative player under the MFE's control. The second differential equation is a Bellman equation that includes the {\it value of the MFG} (optimal cost) and a \textit{potential function}, that characterizes the state-wise deviations from the value. The {\it time-dependent ergodic MFG system} is written:
\begin{align}\label{erg_MFG_t}
    \begin{split}
        &-\frac{d}{dt} \check u(t,x) + \check \vr =  H(x,\Delta_x \check u(t,\cdot)) + F(x,\check \mu(t)), \\
        &\frac{d}{dt} \check \mu(t,x) = \sum_{y\in [d]}  \check \mu(t,y) \gamma^*_x(y,\Delta_y \check u (t,\cdot)), \\
        &\check \mu(0) = \eta, \quad t\in \R_+ := (0,\iy), \quad x\in [d],
    \end{split}
\end{align} 
where $\calP([d])$ is the set of probability distributions over $[d]$, 
$\Delta_x$ is the finite difference operator $\Delta_x b := (b_y -b_x)_{y\in [d]}$, \blue{$H:[d]\times\R^d\to\R$ is the Hamiltonian and $\gamma^*: [d]\times\R^d \to \A_{[d]}^d$, is its optimal selector, representing the rates to move from state $x$ to the other states, given by:
\begin{align}\notag
H(x,p):= \min_{a\in\A^d_{-x}}\Big\{f(x,a)+a\cdot p\Big\},\qquad \gamma^*(x,p):=\argmin_{a\in\A^d_{-x}}\Big\{f(x,a)+a\cdot p\Big\},
\end{align}
where $\A^d_{-x}$ and $\A_{[d]}^d$ represent all allowable rates leaving state $x$ and all allowable rates in general, respectively. Their precise definitions are provided in Section \ref{sec15}, where the notation is introduced.} 
The running cost component $f$ is part of the Hamiltonian $H$. A solution to \eqref{erg_MFG_t} is a triple $(\check \vr, \check u,\check \mu)$ where $\check \vr \in \R$ is the value, the potential function is $\check u : \R_+ \times [d] \to \R$, and the equilibrium measure flow is $\check \mu : \R_+ \to \calP([d])$. \textit{Any solution to the MFG system is linked to an MFE}: when the representative player uses the rates $[\gamma^*_y(x,\Delta_x \check u (t,\cdot))]_{x,y\in [d]}$ from the MFG system, then: (1) the controls and $\check \mu$ constitute an MFE, (2) the player pays the cost $\check \vr$, and (3) the representative player's law equals $\check \mu$.

Our ergodic systems consider a long-time average cost and so admit a stationary system whose optimal control, measure flow, and potential function are all independent of time. The \textit{stationary ergodic MFG system}, is:
\begin{align}\label{erg_MFG}
    \begin{split}
        &\bar \vr = H(x,\Delta_x \bar u) + F(x,\bar\mu), \\
        &0 = \sum_{y\in [d]} \bar \mu_y \gamma^*_y(x,\Delta_x \bar u),\quad x\in [d],
    \end{split}
\end{align} where any solution is a triple $(\bar\vr, \bar u, \bar\mu) \in \R\times\R^d\times \calP([d])$. When the initial data from \eqref{erg_MFG_t}, $\eta = \bar\mu$, we note that \eqref{erg_MFG} is a special case of \eqref{erg_MFG_t}. \blue{This is an algebraic equation, hence its solvability is generally more straightforward than that of \eqref{erg_MFG_t} or the master equation presented in \eqref{ME}.}

The MFG system is usefully characterized by a single PDE, the master equation. On one hand, the MFG system fixes the initial distribution of the players beforehand and then solves the system to find the MFE evolving from that distribution. More advantageously, the master equation solves for every initial distribution simultaneously. The master equation is particularly useful to study the convergence problem of (exact) Nash equilibria to the MFE. In this paper, we take the reversed direction, aiming to construct asymptotic Nash equilibria in the large population game from the MFE. For $(x,\eta) \in [d]\times \calP([d])$, the finite-state {\it ergodic master equation} was studied in \cite{CZ2022} and is given by:
\begin{equation}\label{ME}
    \vr = H(x,\Delta_x U_0 (\cdot, \eta)) + F(x,\eta) + \sum_{y,z \in[d]} \eta_y  D^\eta_{yz} U_0 (x,\eta) \gamma_z^*(y,\Delta_y U_0(\cdot, \eta)).
\end{equation} 
A solution to \eqref{ME} is a pair $(\vr, U_0)$ where $\vr\in\R$ is the {\it value of the MFG} and $U_0 : [d]\times \calP([d])\to\R$ is again referred to as the master equation's {\it potential function}. In Cohen, Zell, and Lauri\`ere \cite{CLZ2024}, the authors present two numerical schemes for solving this master equation.

Analogously to finite-horizon models, there are two natural ways to construct asymptotic Nash equilibria for the ergodic game: from the MFG system and from the master equation. By using controls from the MFG system, we mean players compute their optimal strategy through the potential function $\check u$ and the minimizer of the Hamiltonian, $\gamma^*$, and we expect the 
\begin{align}\label{eq:MFG_derived}
\text{{\it MFG system-derived strategy} \qquad $(\gamma^*_y(X^{n,i}_t,\Delta_{X^{n,i}_t} \check u(t))_{t\ge0}$\quad$i\in[n]$},
\end{align}
would be approximately optimal in the $n$-player game. Note that the potential function $\check u$ is coupled with the MFE flow $\check \mu$ through the MFG system \eqref{erg_MFG_t} and using the MFG system-derived strategy results in a player's law equal to the MFE flow. When instead considering the stationary ergodic MFG system \eqref{erg_MFG}, the corresponding strategy is $\gamma^*_y(x,\Delta_x \bar u).$

We also consider the 
\begin{align}\label{eq:ME_derived}
\begin{split}
\text{{\it master equation-derived strategy} \qquad
$(\gamma^*_y(X^{n,i}_t,\Delta_{X^{n,i}_t} U_0(\cdot, \mu^{n,i}_t)))_{t\ge0}$\quad$i\in[n]$}.
\end{split}
\end{align}
Here, $\mu^{n,i}_t$ denotes the current empirical distribution of the other players when all players are employing this profile. A property of the master equation assures us that replacing the empirical distribution in the above by the MFE flow just recovers the MFG system-derived strategy (namely, $\gamma^*_y(x,\Delta_x U_0(\cdot, \check\mu(t)))=\gamma^*_y(x,\Delta_x \check u(t))$) and so does not offer anything new. These two methods differ since, in the former, players respond to the deterministic flow of measures and their own states, hence can be thought of as {\it controls} while the latter lets players respond to the empirical distribution of the other players. In either case, players plug in a potential function different from the $n$-player game's potential function, which is harder to calculate. 
 
\blue{At first glance}, using the MFG system offers an advantage because \eqref{erg_MFG_t} and \eqref{erg_MFG} might have solutions even when \eqref{ME} might not. Moreover, the MFG system-derived strategy is simpler because players only observe a theoretical MFE flow and their own state rather than the realized empirical distribution. However, through a numerical example provided in \Cref{sec:numerics}, we demonstrate the advantages of using the master equation-derived strategies; specifically, they result in a stationary distribution that is more concentrated around the stationary MFE, the law of the empirical distribution has fewer deviations to extreme values, and the realized cost in the $n$-player game is lower, as opposed to the MFG system-derived strategies.

\subsection{Relevant work} 
The study of MFGs dates back to the pioneering work by Lasry and Lions \cite{LasryLions}, and Huang, Caines and Malham{\'e}, e.g., \cite{Huang2006,Huang2007}. Ergodic MFGs on continuous state-spaces consider diffusions instead of jump processes and trace back to the origins of MFGs by Lasry and Lions \cite{Lasry2006}. Later, Cardaliaguet and Porretta \cite{car-por} developed the theory of ergodic MFGs as limits of both finite-horizon MFGs and of infinite-horizon, discounted MFGs, on the torus, and treated the ergodic master equation. 
The first reference to the \textit{finite-state}, ergodic MFG comes from Gomes et. al. \cite{Gomes2013}. Later, Cohen and Zell \cite{CZ2022} studied the finite-state ergodic MFG without the contractive assumptions of \cite{Gomes2013}, and further proved the well-posedness of the finite-state, ergodic master equation. Other finite-state, ergodic MFG frameworks consider the discrete-time case \cite{GomesMohrSouza_discrete} and with finite action space by Neumann \cite{neumann2020stationary}.

In addition to investigating asymptotic Nash equilibria, we can directly compare the optimal cost functions of the MFG and the $n$-player game (under exact Nash equilibria). \blue{We categorize games into three types based on control methods: {\it open-loop controls}, where players only observe the noise components of others; {\it closed-loop controls}, where players can observe the entire dynamics of others (potentially including their full history); and {\it Markovian (closed-loop) controls}, where players observe only the current state of the dynamics of other players.}

For open-loop controls, Lacker \cite{Lacker2015general} and Fischer \cite{Fischer2017} established convergence of both asymptotic and exact Nash equilibria in the $n$-player finite-horizon game to the MFE in the corresponding MFG. The convergence in the continuous state-space, finite-horizon, with Markovian controls was established in the seminal work of Cardaliaguet, Delarue, Lasry, and Lions \cite{CardaliaguetDelarueLasryLions} using the master equation, and subsequently in the finite state-space by Cecchin and Pelino \cite{cec-pel2019} and Bayraktar and Cohen \cite{bay-coh2019}. Recently, Lacker \cite{Lacker2020, Lacker2023} showed that the convergence holds in the continuous state-space, finite horizon with general closed-loop controls (with and without common noise). The ergodic model in continuous state-space was studied \blue{by Feleqi \cite{fel2013}, Bardi and Priuli \cite{bar-pri2014},} and Arapostathis et al. \cite{Arapostathis_2017}, establishing the convergence of exact Nash equilibria to the MFE for open-loop controls. In sharp contrast to the finite-horizon work with closed-loop controls \cite{Lacker2020} and to the infinite-horizon with open-loop controls \cite{Arapostathis_2017}, Cardaliaguet and Rainer \cite{MR4083905} provided an example of multiple mean field limits in ergodic differential games with closed-loop controls, while the corresponding MFG has a unique MFE. The convergence problem in the ergodic Markovian setting is still an open question \blue{as we point out in Remark \ref{rem10}}.

We emphasize that we study the question of asymptotic Nash equilibria and not convergence of the optimal value functions, as discussed in the preceding paragraph. In the following section, we precisely outline this work's novel contributions.

\subsection{Contributions} 
\Cref{sec:25} provides the complete presentation of our results, and we describe them heuristically here. We introduce the MFG system and master equation-derived strategy profiles \eqref{eq:MFG_derived} and \eqref{eq:ME_derived}, 
and prove that they constitute asymptotic Markovian Nash equilibria in
\Cref{thm:epsilon_equilibrium} and \Cref{thm:epsilon_equilibrium_2}, respectively. Commensurate results for the stationary ergodic MFG system \eqref{erg_MFG} follow from \Cref{thm:epsilon_equilibrium} and are provided in \Cref{cor:stationary_eq}. In each asymptotic Nash equilibria result, we also assert that the associated empirical measures converge to the MFE flow as the number of players grows and refer to this as \textit{propagation of chaos}. \Cref{thm:large_deviations} presents corresponding large deviation results for the empirical measures. We begin by describing two lemmas that assist the proofs of both \Cref{thm:epsilon_equilibrium} and \Cref{thm:epsilon_equilibrium_2}, describe the differences between the proofs of each, and afterward speak to \Cref{thm:large_deviations}.

Since we assume all transition rates are bounded away from zero, and since the state-space $[d]$ is finite and fully-connected, any jump process on $[d]$ or $[d]^n$ must be exponentially ergodic. \Cref{lem:freedom} uses this fact to prove that, when such a jump process is plugged into a function under a long-time average, the process' initial distribution is irrelevant. In particular, \Cref{lem:freedom} removes the ergodic cost's dependence on its initial condition. The other intermediary, \Cref{lem:pre_duality}, estimates the difference between an empirical distribution of players under an arbitrary Markovian strategy profile and the measure flow $\check \mu$ from \eqref{erg_MFG_t}. The bound we prove depends on three things: the difference in the initial conditions, along with an exponential decay in time; the number of players, $n$; and, the difference between the arbitrary profile strategies and the MFG system-derived strategies \eqref{eq:MFG_derived}.

\Cref{sec:asymp_proofs} applies these lemmas to prove \Cref{thm:epsilon_equilibrium}. To prove \Cref{thm:epsilon_equilibrium} in the most generality we can, we do not require that the mean-field cost $F$ is Lasry--Lions monotone nor that the Hamiltonian $H$ is concave. We use that  $F$ is Lipschitz and \Cref{lem:freedom} to compare the costs \eqref{eqn:MFG_cost} and \eqref{eqn:n_cost}, and use that the strategy is optimal in the MFG to obtain approximate optimality in the $n$-player game. At one point, \Cref{lem:concentration_inequality_1}, a concentration inequality for Bernoulli variables helps to estimate the error. The propagation of chaos, in turn, follows from \Cref{lem:pre_duality}, simplified by the fact that the strategies selected for the $n$-player game are exactly the controls from the MFG.

\Cref{thm:epsilon_equilibrium_2} establishes that the master equation-derived strategies have propagation of chaos and constitute an asymptotic Nash equilibrium too. While in \Cref{thm:epsilon_equilibrium} players respond to a theoretical MFE flow and so do not respond to their own system, in \Cref{thm:epsilon_equilibrium_2}, players respond to their own empirical distribution. The main difficulty is the \textit{feedback problem} arising from the Markovian strategies \blue{as we now explain. Recall that with open-loop controls, players observe noises rather than dynamics. Therefore, if a player deviates from a predetermined Nash equilibrium, that deviation is not revealed to others. Its impact on the other players is only through the empirical distribution, which has an 
$O(1/n)$ effect. The situation becomes more complex with closed-loop controls, specifically, in the Markovian setting. When a player deviates from the Nash equilibrium, its dynamics change. Since the other players observe the dynamics of all players, including the one that deviated, their controls will also adjust. This may create a snowball effect, potentially leading to significant changes in the players' empirical distribution.}


One might already see the issue. If we compare empirical measures with and without a deviating player, the feedback problem ensures that any naive argument relying only on Lipschitz estimation becomes stuck in a loop. For the finite-horizon case, Gronwall's inequality can escape such loops \cite{Lacker2017}, but it is not applicable for the infinite-horizon. In fact, as mentioned in the introduction, \cite{MR4083905} showed that, for non-Markovian strategies, there may even be multiple limits to the exact Nash equilibria, while the corresponding MFG has only one MFE.

We address the feedback problem by combining \Cref{lem:pre_duality} with \Cref{lem:miracle}. Now, recall that in \Cref{lem:pre_duality}, one of the terms in the  upper bound for the difference between an empirical distribution of players under an
arbitrary Markovian strategy profile and the measure flow $\check\mu$, is an integral over the difference between the arbitrary profile strategies and the MFG system-derived strategies \eqref{eq:MFG_derived}.
In the case of the MFG system-derived strategies, this integral term in the upper bound vanishes. 
This term does not vanish for the master equation-derived strategy profile. 
In the MFG setting, a typical way to deal with an integral term like that one 
is to use the \textit{duality} of the MFG system. That is, integrating a potential function against its measure flow as a test function results in estimates that overcome the feedback problem. However, the MFG system is an ODE system, while what we need to estimates concerning a random, empirical measure. So for \Cref{lem:miracle}, we try to proceed like a standard duality argument, but encounter random terms that we will need to estimate or that vanish under expectation as martingales. Consequently, we accumulate $n$-dependent error that we drag through the proof. The part of the proof of \Cref{thm:epsilon_equilibrium_2} that concerns \Cref{lem:miracle} uses expectations taken under a stationary distribution. Hence, we deal with {\it stationary} Markovian strategies to start. Toward the end of the proof, we upgrade our analysis so it handles any Markovian strategy by \Cref{lem:stat_is_enough}. \Cref{lem:stat_is_enough} relies on an auxiliary ergodic system that we construct through (another) auxiliary discounted system and by sending the discount factor to zero. We then prove the optimal strategy for a deviating player is \textit{stationary} Markovian and use this fact to finish the proof of \Cref{thm:epsilon_equilibrium_2}.

Lastly, \Cref{thm:large_deviations} obtains large deviations results: (1) for the different empirical measures' laws at a fixed time, (2) for the laws of the measure's paths, and (3) for the invariant distributions. Like \Cref{thm:epsilon_equilibrium_2}, the proof of the large deviation principle requires that $F$ is monotone and $H$ concave, but this time much of the heavy lifting is done by \cite{MR3354770}. We only need to verify assumptions concerning ODE stability for the Kolmogorov equation from \eqref{erg_MFG_t} and the details are presented in \Cref{sec:ld_proofs}.

In the final part of the paper, Sections \ref{sec:numerics} and \ref{sec:ld_numerics}, we study a numerical example to elucidate differences between MFG system-derived and master equation-derived asymptotic Nash equilibria. 
We contrast the strategies, the stationary distributions of the empirical distributions, the large deviation rate functions, and the players' realized costs. As we discuss, the master equation-derived strategies result in a lower realized cost to the players than those of the MFG.

Computing the master equation-derived strategies requires computing $U_0$ and, since the master equation is a fully nonlinear PDE, it is not clear how to analytically do this, even for a concrete example. To approximate the solution to the master equation, we adapt the deep Galerkin method (DGM) of \cite{MR3874585} to the ergodic master equation in \Cref{alg:ergodic_DGM}. The DGM samples the state-space of the PDE, and using the PDE as part of the loss function of a neural network, trains the network to approximate the solution to the PDE. For the last leg of the numerics, we return to the large deviation principles established in \Cref{thm:large_deviations}. In the case the dimension $d=2$, we recognize a way to interpret the empirical measures as a birth-death process and introduce a function that we can use to prove an explicit formula for the good rate functions holds; we present this special case in an ancillary result, \Cref{thm:special_case}. 


\subsection{Outline of the paper} 

In \Cref{sec:2} we introduce the $n$-player ergodic game and the assumptions we consider. After, \Cref{sec:25} presents the main theorems of the paper. \Cref{sec:asymp_proofs} gives the proof for \Cref{thm:epsilon_equilibrium} through its several lemmas and \Cref{sec:thm2} does the same for \Cref{thm:epsilon_equilibrium_2}. Briefly, \Cref{sec:ld_proofs} gives the proofs for the large deviation result \Cref{thm:large_deviations}. In \Cref{sec:numerics}, we begin to numerically investigate the different aspects of the asymptotic Nash equilibria, where one profile is derived from the MFG system and one from the master equation. Finally, \Cref{sec:ld_numerics} concludes the paper with the numerics pertaining to large deviations. We conclude the current section by introducing some commonly used notation.

\subsection{Notation and basic definitions}\label{sec15}
We denote the ray $[0,\infty)$ by $\R_+$, the set of positive integers by $\N$, and the set of positive integers larger than one by $\N_{>1}$. \blue{For any Euclidean vector $v$, we denote its Euclidean norm as $|v|$.} Let $\vec{1}\in\R^d$ be the vector of all ones, so $k\vec{1}$ will be an additive constant vector appearing in the context of uniqueness of potential functions. We also denote the sets $[n]:=\{1,2,\dots,n\}$ and $[d]:=\{1,2,\dots,d\}$, where $n$ denotes a number of players and $d$ the number of states. Let $\1_A$ be the indicator function corresponding to the event $A$; that is, the function that is $1$ on $A$ and $0$ on its complement. The parameters $i,j$, and $k$ (resp., $x,y$, and $z$) represent a player (reps., state) and belong to $[n]$ (resp., $[d]$). For any random variable $Y$ with distributional law $\calL(Y)$, we may write $Y\sim \calL(Y)$. Fix two positive real numbers $0<\fra_l<\fra_u<+\iy$. For any $d\in\N_{>1}$ and $m\in\N_{>1}$, we set:
\begin{align}\notag
&\calP([d]):=\Big\{\eta\in\R^d_+:\sum_{x\in[d]}\eta_x = 1\Big\}, \quad \text{ a } (d-1)\text{-dimensional simplex,}
\\\notag
&\calP^m([d]):=\left\{\eta\in\calP([d]):m\eta\in(\N\cup\{0\})^d\right\}, \quad \text{valid configurations of } m \text{ players,} \\\notag
&\A_{-x}^d := \Big\{ a\in\R^d : \forall y\neq x,\quad a_y\in [\fra_l, \fra_u], \quad a_x = -\sum_{y,y\neq x} a_y\Big\}, \text{ allowable rates leaving state }x,\\\notag
&\A^d_{[d]} := \bigcup_{x\in [d]} \A_{-x}^d, \quad \text{ all allowable rates.}
\end{align} Moreover, let $\calQ$ be the set of $d\times d$ rate matrices; that is, $\alpha \in \calQ$ if for every distinct $x,y\in [d]$, $\alpha_{xy}\geq 0$ and $\sum_{z\in [d]} \alpha_{xz}=0$. When necessary, we denote the Lipschitz constant of a given function $h$ by $C_{L,h}$ and a bound by $C_h$. For any $x\in[d]$ and  $p\in\R^d$, set $\Delta_x p:=(p_y-p_x)_{y\in[d]}.$ For a function $K$ mapping $\R^d\ni p \mapsto K(p)\in\R$, we denote its gradient by $K_p$ or $D_p K$ and Hessian matrix by $K_{pp}$. We also use derivatives on the simplex as follows. When $K:\calP([d]) \to \R$ is a measurable function, we say that $K$ is differentiable if there exists a function $D^\eta_{yz} K : \calP([d])\to\R^{d\times d}$ such that 
\[
D^\eta_{yz} K (\eta) = \lim_{h\to 0^+} h^{-1}(K(\eta + h(e_{z} - e_y))-K(\eta))
\] for all $z\in [d]$, where $e_y$ is the $y$-th standard basis vector in $\R^d$. Denote the vector $D^\eta_y K := (D^\eta_{yz} K)_{z\in [d]}$ and the matrix $D^\eta K := (D^\eta_{yz} K)_{y,z \in [d]}$. The set of continuous functions with at least one continuous derivative, which are defined on $\R_+$ and with values in some Banach space $X$, is denoted $\calC^1(\R_+,X)$. Lastly, for $K: [d]\times \calP([d])\to\R$ we say $K$ is \textit{Lasry--Lions monotone} if for all $\eta,\hat \eta \in\calP([d])$,
\[
\sum_{x\in [d]} (K(x,\eta) - K(x,\hat \eta)) (\eta_x-\hat \eta_x) \geq 0.
\]

\section{The model}
\label{sec:2}

In Section \ref{sec:21}, we describe the $n$-player, finite-state ergodic game model. The assumptions about the models are listed in Section \ref{sec:22}. Next in Section \ref{sec:24}, we introduce the corresponding ergodic MFG via its dynamics. Then, we recall some results about the ergodic MFG systems \eqref{erg_MFG_t} and \eqref{erg_MFG} as well as their connection to the master equation \eqref{ME}.

\subsection{The $n$-player model}
\label{sec:21}
Throughout the paper, we consider a filtered probability space $(\Omega, \calF,\{\calF_t\}_{t\in\R_+}, \PP)$, satisfying the usual conditions, supporting Poisson random measures and jump processes as we detail below.

We consider a finite-state, continuous-time, symmetric $n$-player game with weak interactions. We begin by describing the $n$-player game's dynamics.  The state-space is the finite set $[d]$, where $d$ is an integer larger than one. Players choose their rates of transition from their own state to the other states and each player aims to minimize a personal cost. Both the transition rates and the cost depend on the players' current states.

Denote the players' positions via the vector $\bx := (x_1,\dots,x_n) \in [d]^n$. Players move at rates from the compact set $\A=[\fra_l,\fra_u]\subset\R_+$. A measurable function  
$\al:\R_+\times[d]^n\to \R^d$ satisfying $\al_{y}(t,\bx)\in\A$ for any $y\ne x_i$, and $\al_{x_i}(t,\bx)=-\sum_{y\in[d], y\ne x_i}\al_{y}(t,\bx)$ is called a {\it Markovian strategy} for player $i\in[n]$. Whenever $\al$ is independent of the time component we refer to it as a {\it stationary Markovian strategy}. For $y\ne x_i$, the magnitude $\al^i_y(t,\bx)$ represents the transition rate of player $i$ from its own state $x_i$ to state $y$, when the rest of the players are in states $\bx^{-i}:=(x_j : j\in [n]\setminus\{i\})$. Denote by $\calA^{n,i}$ (resp., $\calA^{n,i}_S$) the set of all Markovian strategies (resp. stationary Markovian strategies) for each one of the players. Then $\bal :=(\al^i:i\in[n])\in\calA^{(n)}:=\prod_{i=1}^n\calA^{n,i}$ (resp., $\bal=(\al^i:i\in[n])\in\calA^{(n)}_S:=\prod_{i=1}^n\calA^{n,i}_S$) is called a {\it Markovian strategy profile} (or respectively, {\it stationary Markovian strategy profile}).

For the following description of player dynamics, fix a number $n\in\N_{>1}$, an initial state for the players $\bx\in[d]^n$, and a Markovian strategy profile $\bal\in\calA^{(n)}$. Often, we will refer to the $n$-players' initial state as a {\it configuration} of the players. Given the fixed Markovian strategy profile $\bal$, the process standing for the dynamics of the $n$-players on $[d]^n$ is denoted by the bold vector $(\bX_t)_{t\geq 0}$. On occasions when we want to emphasize the dependence of $\bX$ on the Markovian strategy profile $\bal$, we denote it by $(\bX^{\bal}_t)_{t\ge 0}$. 
\blue{We rigorously utilize the representation of dynamics introduced by Cecchin and Fischer \cite{Cecchin2017}, where the dynamics of player 
$i$ are described by the stochastic differential equation:}
\begin{align*}\label{eqn:Xi_dyn}
X^{\bal,i}_t = X^{\bal,i}_0 + \int_0^t \int_{\A^d} \sum_{y\in [d]} (y-X^{\bal,i}_{s^-}) \1_{\{\xi_y\in(0,\al^i_y(s^-,\bX_{s^-}^{\bal}))\}} \calN_i(ds,d\xi),
\end{align*} where $\{\calN_i\}_{i\in [n]}$ are i.i.d.~Poisson random measures with common intensity measure $\nu$ given by:
\begin{equation}\label{n_intensity_measure}
\nu(E) := \sum_{y\in [d]}\text{Leb}(E\cap \A^d_y),
\end{equation} 
where $\A^d_y := \{u\in\A^d \mid u_x = 0 \text{ for all }x\neq y\}$ and $\text{Leb}$ is the Lebesgue measure on $\R$.

Recall that each player attempts to minimize the cost \eqref{eqn:n_cost} that is a function of the initial configuration $\bx \in[d]^n$ and the Markovian strategy profile $\bal$. 
We have that $f:[d]\times \A^d_{[d]}\to\R$ is measurable, $F:[d]\times \calP([d])\to\R$ is measurable, and 
$$\mu^{\bal,i}_{t}:=\frac{1}{n-1}\sum_{j\in[n], j\ne i}\delta_{X^{\bal, j}_t}.$$
 As a technicality, we also assume that $f(x,\al(t,\bx))$ should be independent of the $x$-th coordinate of $\al(t,\bx)$. This technical assumption is justified since the $x$-th coordinate of the function $\al(t,\bx)$ for a player in a state $x$ does not represent a rate, but rather the negative sum of the player's other rates. In that sense, $f$ already accounts for the ``payment" for the player's rates. With this game setup, we have a notion of $\eps$-Nash equilibrium: 

\begin{definition}
The Markovian strategy profile $\bal\in\calA^{(n)}$ is called an $\eps$\textit{-Nash equilibrium profile} for the $n$-player ergodic game if for any $i\in[n]$ and any $\beta^i\in\calA^{n,i}$, one has
\begin{align*}\notag
&J^{n,i}_0(\bx,\bal)\le J^{n,i}_0(\bx, [\bal^{-i};\beta^i])+\eps, 
\end{align*} 
where \[
[\bal^{-i};\beta^i] := (\al^1,\dots,\al^{i-1},\beta^i,\al^{i+1},\dots,\al^n).
\] For all $n\in\N_{>1}$, let $\bal^{(n)} \in \calA^{(n)}$  be a $\eps_n$-Nash Markovian equilibrium. We say that $(\bal^{(n)})_{n\in\N_{>1}}$ is an \textit{asymptotic Nash equilibrium} whenever $\eps_n\to 0$ as $n\to\iy$. 
\end{definition}

\begin{remark}
While we define an $\eps$-Nash equilibrium in terms of Markovian strategies and not stationary Markovian strategies, an $\eps$-Nash equilibrium can be a stationary Markovian strategy profile. This means that the deviation of one player, even to a time-dependent strategy, will not benefit that player by more than $\eps$. 
\end{remark}


\begin{definition}
    A Markovian strategy profile $\bal := (\al^i)_{i\in [n]}$ is called \textit{symmetric} if for all players $i,j\in [n]$, whenever $x^i=x^j$, for all $t\in\R_+$, and $\bx\in[d]^n$:
    \[
    \al^i(t,\bx) = \al^j(t,\bx).
    \] For a stationary Markovian strategy profile to be called symmetric, the definition is the same, except there is no time component of $\al^i, \al^j$.
\end{definition}

\subsection{Assumptions}\label{sec:22} We require a few assumptions that we split into two sets. The first set of assumptions is labeled with an $A$. We refer to this set of assumptions as the \textit{reduced assumptions}. The reduced assumptions are sufficient to establish the main result concerning asymptotic Nash equilibria, \Cref{thm:epsilon_equilibrium}. For \Cref{thm:epsilon_equilibrium_2} and \Cref{thm:large_deviations}, we require additional regularity of the functions comprising the problem data. These assumptions are denoted with a $B$. The set of all assumptions (both $A$ and $B$ indexed) are referred to as \textit{full assumptions}.

\begin{itemize}
    \item[($A_1$)] For $0<\mathfrak{a}_l < \mathfrak{a}_u<\iy$, the action space is  $\A:=[\mathfrak{a}_l,\mathfrak{a}_u]$.
\end{itemize} 

\begin{itemize}
    \item[($A_2$)] We assume that the Hamiltonian $H$ admits a unique optimal rate selector given by the measurable function $\gamma^*$. Moreover, $\gamma^*$ is Lipschitz in its $p$ argument. 
\end{itemize} The latter assumptions hold, for instance, when the running cost component $f$ is strictly convex in the control/strategy $\al$. In fact, the optimal rate selector $\gamma^*$ is globally Lipschitz when $f$ is uniformly convex. 

\begin{itemize}
    \item[($A_3$)] Given any state $x\in[d]$, the mean field cost $F(x,\cdot)\in\calC^1(\calP([d]))$ is Lipschitz continuous. 
\end{itemize} Consequently, $F(x,\cdot)$ is bounded since it is defined on a compact set.
 Lastly, we assume: 

\begin{itemize}
    \item[($B_1$)] 
    Additionally, the mean field cost $F$ is Lasry--Lions monotone and $D^\eta F(x,\cdot)$ is Lipschitz.
\end{itemize}


\begin{itemize}
    \item[($B_2$)] The Hamiltonian $H$ is twice continuously differentiable with respect to its $p$-argument. Moreover, on any compact set $[-K,K]$ we will assume that $D_pH$ and $D^2_{pp}H$ are Lipschitz in $p$ and that there exists $C_{2,H}>0$ such that:
\[
D^2_{pp} H(x,p) \leq -C_{2,H}.
\]
\end{itemize} Because any argument of $H$ in the paper will be uniformly bounded, it is enough to assume these regularity properties of $H$ only on compact sets. At any point when $H$ is differentiable, we have by \cite[Proposition 1]{Gomes2013} that: \begin{align}
    \label{gamma_H}
\gamma^*(x,p) = D_p H(x,p).
\end{align}

\begin{remark}
    The full assumptions are considered standard in MFG literature---for instance, \cite{CZ2022, bay-coh2019, cec-pel2019} all consider finite-state MFGs with a concave (or if they maximize a profit instead of minimizing a cost, convex) Hamiltonian and a separable cost $f+F$, with Lasry--Lions monotone assumptions on $F$. These assumptions are typical for studying the well-posedness of the MFG and master equation over continuous state-spaces as well \cite{CardaliaguetDelarueLasryLions, car-por, ben-fre-yam2015}. However, some papers consider alternate assumptions to deal with Hamiltonians of a different form. A few examples of alternate schemes include: \cite{gan-mes-mou-zha2022} considers a displacement monotonicity condition allowing for analysis of non-separable Hamiltonians, \cite{mou-zha2019} considers non-smooth Hamiltonians under technical assumptions, and \cite{2022arXiv220110762M} considers an anti-monotonicity condition. Each approach is able to handle data that the other approaches cannot. By considering the reduced assumptions when appropriate, we prove \Cref{thm:epsilon_equilibrium} in the greatest generality so that it might be widely applicable.
\end{remark}

\subsection{The ergodic MFG, the MFG systems, and the master equation}\label{sec:24}



The finite-state stationary ergodic MFG system \eqref{erg_MFG} was first studied in \cite{Gomes2013} under additional contractivity assumptions. Later, \cite{CZ2022} studied the same MFG system without any contractivity assumptions and we recall the major points here.

In an ergodic MFG, the representative player chooses a measurable control function $\al : \R_+\times [d] \to \A^d_{[d]}$ while playing against a measure flow $\mu : \R_+ \to\calP([d])$ in order to minimize the long-run average cost \eqref{eqn:MFG_cost}.

The representative player's position evolves according to a controlled jump process, given by:
\[
\calX_t^\al = \calX_0^\al + \int_0^t \int_{\A^d} \sum_{y\in [d]} (y-\calX_{s^-}^\al)\boldsymbol{1}_{\{\xi_y\in(0,\al_y(s^-,\calX_{s^-}^\al))\}} \calN(ds,d\xi),
\] where $\calX_0^\al$ has the distributional law $\mu(0)$ and $\calN$ is a Poisson random measure with intensity measure $\nu$ as in \eqref{n_intensity_measure}. For the next definition, recall the ergodic cost function in the MFG, $J_0$, provided in \eqref{eqn:MFG_cost}.

\begin{definition}[Ergodic MFE]\label{def:erg_MFE}
    Let $\al \in\calA$ and $\mu :\R_+ \to\calP([d])$. Let $\calX^\al$ be the dynamics of the representative player using $\al$. We say that $(\al, \mu)$ is an ergodic MFE if the following hold:
    \begin{itemize}
        \item For all $t\in\R_+$, $\calL(\calX^{\al}_t) = \mu(t)$,
        \item and the control $\al$ satisfies:
        \[
        J_0(\al, \mu) = \inf_{\beta\in\calA} J_0(\beta, \mu).
        \]
    \end{itemize}
\end{definition}

Typically in ergodic problems, there is an optimal cost or \textit{value} that is independent of the initial state. Moreover, the long-time averaging in the cost often yields a stationary Markovian optimal control and hence a stationary measure that describes a limiting law for the representative player. We saw this stationarity in the system \eqref{erg_MFG} from before while the general system was given in \eqref{erg_MFG_t}. For any solution $(\bar \vr, \bar u, \bar \mu)$ to \eqref{erg_MFG}, and there could be more than one solution, note that the potential function $\bar u$ is modifiable up to addition of a constant vector---this comes from the fact that $\bar u$ only appears in \eqref{erg_MFG} as the finite difference vector $\Delta_x \bar u$.

The result of \cite[Proposition~2.1]{CZ2022}  proved that the stationary ergodic MFG system \eqref{erg_MFG} is well-posed and that any solution to it naturally characterizes a mean field equilibrium, while any mean field equilbrium naturally characterizes a solution. The first main point is recalled here:

\begin{proposition}\label{prop:erg_MFG_red_full}
    With reduced assumptions, \eqref{erg_MFG} admits a solution, and, with full assumptions, the solution to \eqref{erg_MFG} is unique. We emphasize that for a unique solution $(\bar\vr, \bar u, \bar\mu) $ to \eqref{erg_MFG}, the potential function $\bar u$ is only unique up to an additive constant vector $k\vec{1}$, $k\in\R$. Moreover, under the reduced assumptions, $((\gamma_y(x, \Delta_x \bar u))_{x,y\in [d]}, \bar\mu)$ is an ergodic MFE and so for all $t>0$:
    \[
    \calX^{(\gamma_y(x, \Delta_x \bar u))_{x,y\in [d]}}_t \sim \bar\mu, \quad \text{ with } \quad \bar\vr = J_0((\gamma_y(x, \Delta_x \bar u))_{x,y\in [d]}, \bar\mu)
    \] when $\calX^{(\gamma_y(x, \Delta_x \bar u))_{x,y\in [d]}}_0 \sim \bar\mu$.
\end{proposition}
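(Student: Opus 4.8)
For existence under the reduced assumptions, the plan is a Brouwer fixed-point argument on the simplex. Fix $\mu\in\calP([d])$ and first solve the ``frozen'' ergodic Bellman (cell) equation $\vr_\mu = H(x,\Delta_x u_\mu) + F(x,\mu)$, $x\in[d]$. This is a finite-state ergodic control problem whose admissible rates lie in $[\fra_l,\fra_u]$, hence every control yields an irreducible jump process on the fully connected finite set $[d]$; standard ergodic-control theory for finite-state chains (e.g.\ via vanishing discount, as in \cite{CZ2022,Gomes2013}) provides a solution $(\vr_\mu,u_\mu)\in\R\times\R^d$ with $\vr_\mu$ unique and $u_\mu$ unique up to an additive constant, so I would normalize $u_\mu$ by $u_\mu^1=0$. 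The oscillation bound $\max_x u_\mu^x-\min_x u_\mu^x\le C$ holds uniformly in $\mu$ because $f$ and $F$ are bounded and, since all rates are $\ge\fra_l>0$, any state is hit from any other in uniformly bounded expected time. Let $Q^\mu$ be the rate matrix with off-diagonal entries $\gamma^*_y(x,\Delta_x u_\mu)$; it is irreducible, hence has a unique invariant probability $\Phi(\mu)\in\calP([d])$. To invoke Brouwer on the compact convex set $\calP([d])$ I must check that $\Phi$ is continuous: $\mu\mapsto u_\mu$ is continuous by the uniqueness of the normalized cell problem together with the uniform bounds and the continuity of $F(x,\cdot)$ (extract convergent subsequences and pass to the limit), $u\mapsto\gamma^*(x,\Delta_x u)$ is continuous by ($A_2$), and the invariant measure of an irreducible rate matrix depends continuously on the matrix. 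A fixed point $\bar\mu=\Phi(\bar\mu)$ then gives a solution $(\bar\vr,\bar u,\bar\mu):=(\vr_{\bar\mu},u_{\bar\mu},\bar\mu)$ of \eqref{erg_MFG}, with $\bar u$ unique up to $k\vec 1$ because the cell-problem potential is.

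For the MFE characterization (still under the reduced assumptions), I would run a verification argument. Writing $\alpha^*:=(\gamma^*_y(x,\Delta_x\bar u))_{x,y\in[d]}$, the second line of \eqref{erg_MFG} states exactly that $\bar\mu$ is invariant for the generator of $\calX^{\alpha^*}$, so $\calX^{\alpha^*}_0\sim\bar\mu$ forces $\calX^{\alpha^*}_t\sim\bar\mu$ for all $t$. For optimality, apply Dynkin's formula to $\bar u(\calX^\beta_t)$ for an arbitrary $\beta\in\calA$; the generator acts as $\beta(t,x)\cdot\Delta_x\bar u\ge H(x,\Delta_x\bar u)-f(x,\beta(t,x))=\bar\vr-F(x,\bar\mu)-f(x,\beta(t,x))$, whence
\[
\E[\bar u(\calX^\beta_T)]-\E[\bar u(\calX^\beta_0)]\ \ge\ \bar\vr\,T-\E\!\int_0^T\!\big(f(\calX^\beta_t,\beta(t,\calX^\beta_t))+F(\calX^\beta_t,\bar\mu)\big)\,dt .
\]
Dividing by $T$ and letting $T\to\iy$, the boundary term vanishes since $\bar u$ is bounded (finite state-space), giving $J_0(\beta,\bar\mu)\ge\bar\vr$; for $\beta=\alpha^*$ every inequality above is an equality, so $J_0(\alpha^*,\bar\mu)=\bar\vr=\inf_{\beta\in\calA}J_0(\beta,\bar\mu)$, which together with $\calX^{\alpha^*}_t\sim\bar\mu$ verifies \Cref{def:erg_MFE}.

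For uniqueness under the full assumptions I would use the Lasry--Lions duality. Given two solutions $(\bar\vr_i,\bar u_i,\bar\mu_i)$, $i=1,2$, set $w:=\bar u_1-\bar u_2$. Subtracting the Bellman equations and testing against $\bar\mu_1-\bar\mu_2$, then using ($B_1$), gives $\sum_x(\bar\mu_{1,x}-\bar\mu_{2,x})\big(H(x,\Delta_x\bar u_1)-H(x,\Delta_x\bar u_2)\big)\le 0$. Testing each Kolmogorov equation against $w$ and using that $\gamma^*(y,\cdot)\in\A^d_{-y}$ has zero row sum gives $\sum_y\bar\mu_{i,y}\,\gamma^*(y,\Delta_y\bar u_i)\cdot\Delta_y w=0$. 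Combining these with $\gamma^*=D_pH$ (see \eqref{gamma_H}) and the uniform concavity $D^2_{pp}H\le-C_{2,H}$ from ($B_2$), i.e.\ $H(x,q)\le H(x,p)+\gamma^*(x,p)\cdot(q-p)-\tfrac{C_{2,H}}{2}|q-p|^2$, yields
\[
0\ \ge\ \sum_x(\bar\mu_{1,x}-\bar\mu_{2,x})\big(H(x,\Delta_x\bar u_1)-H(x,\Delta_x\bar u_2)\big)\ \ge\ \tfrac{C_{2,H}}{2}\sum_x(\bar\mu_{1,x}+\bar\mu_{2,x})\,|\Delta_x w|^2\ \ge\ 0 .
\]
Since invariant measures of irreducible chains are strictly positive, $\Delta_x w\equiv 0$, so $\bar u_1-\bar u_2=k\vec 1$; then the two rate matrices coincide, hence $\bar\mu_1=\bar\mu_2$ by uniqueness of the invariant measure, and finally $\bar\vr_1=\bar\vr_2$ from the Bellman equation.

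The main obstacle is the existence half: constructing the cell problem's solution with a uniform-in-$\mu$ oscillation bound and proving continuity of the map $\Phi$, in particular continuity of $\mu\mapsto u_\mu$. This is where the structural hypotheses are really used --- rates bounded away from $0$ on a fully connected finite graph (ergodicity, positivity of invariant measures, hitting-time bounds), Lipschitzness of $\gamma^*$ in ($A_2$), and continuity of $F(x,\cdot)$ in ($A_3$). By contrast, the verification argument and the duality uniqueness are comparatively routine once these ingredients are in place.
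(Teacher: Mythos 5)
Your proposal is correct, but note that the paper does not prove this proposition at all: it is recalled verbatim from \cite[Proposition~2.1]{CZ2022}, with the authors only remarking that an inspection of the existence proof there shows it uses nothing beyond the reduced assumptions. So the comparison is really with the standard argument in that reference, and your reconstruction matches it: existence via a Brouwer fixed point of the map $\mu\mapsto$ (invariant measure of the optimal rates for the frozen cell problem), the MFE characterization via Dynkin/verification using that $\beta\cdot\Delta_x\bar u\ge H(x,\Delta_x\bar u)-f(x,\beta)$ with equality at $\gamma^*$, and uniqueness via the Lasry--Lions duality computation combining the monotonicity of $F$ from ($B_1$) with the uniform concavity of $H$ from ($B_2$) and the strict positivity of invariant measures of irreducible chains. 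All three steps are sound; the verification step correctly handles the $\limsup$ cost since the boundary term $\E[\bar u(\calX^\beta_T)]/T$ vanishes on a finite state space, and the duality chain $0\ge S\ge \tfrac{C_{2,H}}{2}\sum_x(\bar\mu_{1,x}+\bar\mu_{2,x})|\Delta_x w|^2\ge 0$ is the right computation after using the zero-row-sum property of $\gamma^*$ to kill the first-order terms against the Kolmogorov equations. The two points you flag as the real work --- the uniform-in-$\mu$ oscillation bound for $u_\mu$ and the uniqueness (up to constants) of the normalized cell-problem solution, which underlies the continuity of $\Phi$ --- are indeed where the structural hypotheses (rates in $[\fra_l,\fra_u]$ on a complete graph, hence uniformly bounded hitting times) are consumed; these are standard for finite-state ergodic control but deserve the explicit maximum-principle or hitting-time argument if written out in full.
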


The proof of existence under reduced assumptions is not explicitly provided in \cite{CZ2022}, but a look at the proofs for existence allows us to recognize that only the reduced assumptions are used there. For the time-dependent system, \cite{soner} studied an example that fits under the reduced assumptions, where a solution to \eqref{erg_MFG_t} exists only when $\eta$ is in a neighborhood of $\bar\mu$. Moreover, \eqref{erg_MFG_t} and the master equation \eqref{ME} are only known to have a solution (for all initial data) under the full assumptions; in that case, the solutions to \eqref{erg_MFG_t}, \eqref{erg_MFG}, and \eqref{ME} are also unique. The main points of \cite[Theorem~2.1]{CZ2022} and the relationship between \eqref{erg_MFG_t} and \eqref{ME} are recalled below.  

\begin{proposition}\label{prop:me_properties}
    Under the full assumptions, \eqref{ME} has a unique solution $(\vr, U_0)$, where $U_0$ is unique up to an additive constant vector $k\vec{1}$, $k\in\R$. Similarly, the solution $(\check \vr, \check u, \check \mu)$ to \eqref{erg_MFG_t} is unique. The potential function $U_0$ is Lipschitz and Lasry--Lions monotone with a Lipschitz derivative $D^\eta U_0$. The relationship between the MFG system and master equation is given by:
    \[
    U_0(x, \check \mu(t)) = \check u_x(t), \text{ and } \vr = \check \vr.
    \] Moreover, for any initial condition $\check\mu(0)=\eta\in\calP([d])$, $((\gamma_y(x, \Delta_x \check u(\cdot)))_{x,y\in [d]}, \check \mu)$ is an ergodic MFE and there exist $C,c>0$ independent of $\eta$ such that for all $t>0$,
    \[
    \calX^{(\gamma^*_y(x,\Delta_x \check u(t)))_{x,y\in [d]}}_t \sim \check \mu(t), \; \text{ with } \; \check \vr = J_0((\gamma_y(x, \Delta_x \check u(t)))_{x,y\in [d]}, \check \mu), \; \text{ and }\; |\check \mu(t) - \bar\mu| \leq Ce^{-ct},
    \] when $\calX^{(\gamma^*_y(x,\Delta_x \check u(t)))_{x,y\in [d]}}_0 \sim \check \mu(0).$
\end{proposition}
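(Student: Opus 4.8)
The plan is to recognize that essentially all of the statement is \cite[Theorem~2.1]{CZ2022} transcribed into the present notation, supplemented by two short arguments tying it to the ergodic control problem \eqref{eqn:MFG_cost}. For the well-posedness and regularity I would quote that theorem directly: under the full assumptions it yields existence and uniqueness of $(\vr,U_0)$ solving \eqref{ME} (with $U_0$ determined up to an additive constant vector $k\vec 1$), the Lipschitz continuity and Lasry--Lions monotonicity of $U_0$ together with Lipschitzness of $D^\eta U_0$, the unique solvability of \eqref{erg_MFG_t}, and the coupling $U_0(x,\check\mu(t))=\check u_x(t)$, $\vr=\check\vr$. The only thing to verify is that $(A_1)$--$(A_3)$, $(B_1)$--$(B_2)$ are exactly the hypotheses used there, which they are by design; internally the argument there runs through the time-dependent MFG system \eqref{erg_MFG_t}, the Lasry--Lions monotonicity of $F$ and strict concavity of $H$ furnishing the stability estimates that make $\eta\mapsto(\check u,\check\mu)$ differentiable with Lipschitz derivative, from which \eqref{ME} is obtained by differentiating in $\eta$.

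Next, for the ergodic MFE claim I would run the standard verification argument on the finite state space $[d]$. Fix $\eta$ and let $\calX$ be the jump process whose rate vector out of a state $x$ at time $t$ is $\gamma^*(x,\Delta_x\check u(t,\cdot))$, started from $\calX_0\sim\eta=\check\mu(0)$. The Kolmogorov equation in \eqref{erg_MFG_t} is precisely the forward equation for $\calL(\calX_t)$ under these rates, so uniqueness of linear ODEs gives $\calL(\calX_t)=\check\mu(t)$ for all $t$. For optimality, apply Dynkin's formula to $s\mapsto\check u(s,\calX^\beta_s)$ along an arbitrary admissible control $\beta$ and use the Bellman equation in \eqref{erg_MFG_t} together with $H(x,p)\le f(x,b)+b\cdot p$ for every admissible $b$, with equality at $b=\gamma^*(x,p)$; dividing by $T$, letting $T\to\iy$, and using that $\check u$ is bounded (being Lipschitz on the simplex) yields $J_0(\beta,\check\mu)\ge\check\vr$ for every $\beta$ and equality for the rate selector, hence $((\gamma^*_y(x,\Delta_x\check u(t)))_{x,y},\check\mu)$ satisfies Definition~\ref{def:erg_MFE}.

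For the exponential estimate I would first observe that $\bar\mu$ is the stationary measure: taking $\eta=\bar\mu$ in \eqref{erg_MFG_t} and invoking uniqueness of the measure component forces $\check\mu\equiv\bar\mu$, so $(\bar\vr,\bar u,\bar\mu)$ solves \eqref{erg_MFG}. For general $\eta$, substituting $\check u_x(t)=U_0(x,\check\mu(t))$ turns the Kolmogorov equation into a closed autonomous ODE for $\check\mu$, with $\bar\mu$ as equilibrium, whose vector field is Lipschitz in $\check\mu$ (since $U_0$ is Lipschitz and $\gamma^*$ is Lipschitz in $p$) and whose rates stay confined to $[\fra_l,\fra_u]$. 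Because $[d]$ is finite and fully connected and $\fra_l>0$, the forward semigroups with frozen coefficients satisfy a uniform Doeblin minorization, hence contract total variation geometrically at a rate independent of the frozen coefficients; combining this contraction with the Lipschitz dependence of the rates on $\check\mu$ closes the feedback loop and produces $|\check\mu(t)-\bar\mu|\le Ce^{-ct}$ with $C,c$ independent of $\eta$ --- an estimate that is itself part of \cite[Theorem~2.1]{CZ2022}.

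The step I expect to be the main obstacle is exactly this last one: the frozen-coefficient contraction is elementary given $\fra_l>0$, but upgrading it to a global-in-time estimate for the nonlinear, $\eta$-dependent flow without letting the constants deteriorate is the nontrivial point. Since \cite{CZ2022} establishes precisely this while constructing the master equation, the proof here ultimately reduces to checking that the hypotheses align with that reference and invoking it.
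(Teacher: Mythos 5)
Your proposal matches the paper's treatment: the paper presents this proposition as a recollection of \cite[Theorem~2.1]{CZ2022} (together with the MFE characterization from \cite[Proposition~2.1]{CZ2022}) and offers no independent proof, so citing that theorem after checking the hypotheses is exactly what is intended. The supplementary verification and forward-equation arguments you sketch are the standard ones underlying the cited result and are consistent with it.
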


We now make some final remarks on the time-dependent ergodic MFG system \eqref{erg_MFG_t}.

\begin{remark}\label{rmk:U_0_soln}
Under the full assumptions, there is a way to construct the solution for \eqref{erg_MFG_t} for any initial condition $\eta\in \calP([d])$. Let $(\vr, U_0)$ satisfy \eqref{ME}, and define $\mu_0:\R_+\to\calP([d])$ as the solution to:
\begin{align}\label{erg_FP_t}
    \frac{d}{dt} \mu_0(t,x) = \sum_{y\in [d]} \mu_0(t,y) \gamma^*_x(y,\Delta_y U_0 (\cdot, \mu_0(t))), \quad \mu_0(0) = \eta \in\calP([d]),
\end{align} 
where we abuse notation and denote the $x$-th coordinate of $\mu_0(t)$ by $\mu_0(t,x)$, hence referring to $\mu$ as a mapping from $\R_+\times[d]\to[0,1]$. Note that under the full assumptions, $\check \vr = \bar\vr = \vr$. Note that \eqref{erg_FP_t} is a forward Kolmogorov equation associated with a Markov jump process $\calX^0$, where $\calX^0$ jumps with rates $[\gamma^*_y(x,\Delta_x U_0(\cdot,\mu_0(t)))]_{x,y\in [d]}$. Next, set $u_0(t,x):= U_0(x,\mu_0(t))$. By a short computation, we can verify that $(\vr, u_0,\mu_0)$ satisfies \eqref{erg_MFG_t}. Of course, for $\eta = \bar\mu$, the solution would be the stationary one $(\bar \vr, \bar u , \bar \mu)$ to \eqref{erg_MFG}. 
\end{remark}

\begin{remark}\label{rmk:station_soln}
    At the same time, we note that any stationary solution $(\bar \vr, \bar u, \bar \mu)$ solving \eqref{erg_MFG} solves \eqref{erg_MFG_t} with $\check \mu(0) = \bar \mu$. Therefore \Cref{prop:erg_MFG_red_full} implies that, even under the reduced assumptions, there is a solution to \eqref{erg_MFG_t} for at least one initial condition. 
\end{remark}

\begin{remark}\label{rmk:check_u_bounded}
    All the potential functions, $\check u$, $\bar u$, and $U_0$, are only unique up to addition of a constant vector, $k\vec{1}$, $k\in\R$. Also, note that for \eqref{erg_MFG_t} $\check u$ can be unbounded as $t$ changes. To avoid this we will say that $(\check \vr, \check u, \check \mu)$ is a solution to \eqref{erg_MFG_t} only when $\check u$ is bounded. For both examples given in \Cref{rmk:U_0_soln} and \Cref{rmk:station_soln}, $\check u$ is bounded because $U_0$ is bounded.
\end{remark}

\section{Asymptotic Results}\label{sec:25}

In this section we provide the results for the three kinds of asymptotic Nash equilibria derived from: the time-dependent MFG \eqref{erg_MFG_t}, the stationary MFG system \eqref{erg_MFG}, and the master equation \eqref{ME}. The following subsection introduces the large deviation results for the empirical distributions of each of these asymptotic equilibria, formulated generally.

\subsection{Asymptotic Markovian Nash-equilibria}

Before presenting the asymptotic Markovian Nash equilibria, we introduce a lemma that eliminates the dependence on the initialization for long-time averaging under expectation, particularly for $J_0^{n,i}$. For the rest of the paper, we write $\E_{\pi}[g(\bX_t)]$ to denote the conditional expectation $\E[g(\bX_t) \mid \bX_0 \sim \pi]$, for $g$ a measurable function. For example, $g(\bX_t)$ might be the empirical distribution of $\bX_t$.

\begin{lemma}\label{lem:freedom}
    Fix $n\in\N_{>1}$. Let $(\beta^i)_{i\in[n]} = \bbeta \in \calA^{(n)}$, and let $g:[d]^n \to \R$ be measurable and bounded. If $(\bX^{\bbeta}_t)_{t\ge0}$ is the jump process of players using the profile $\bbeta$, then for all $\pi_0, \pi_1 \in \calP([d]^n)$:
    \begin{align*}
        \limsup_{T\to\iy} \frac{1}{T} \E_{\pi_0} \int_0^T g(\bX^{\bbeta}_t) dt = \limsup_{T\to\iy} \frac{1}{T}\E_{\pi_1} \int_0^T g(\bX^{\bbeta}_t) dt.
    \end{align*}
\end{lemma}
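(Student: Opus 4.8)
The plan is to exploit the exponential ergodicity of the jump process $(\bX^{\bbeta}_t)_{t\ge0}$ on the finite state-space $[d]^n$. Since all transition rates lie in the compact interval $[\fra_l,\fra_u]$ with $\fra_l>0$, and since $[d]^n$ is finite and—because every single-player chain is irreducible on the fully-connected $[d]$—the product chain is irreducible, the generator $L^{\bbeta}(t)$ associated with $\bbeta$ has a uniform spectral gap. Concretely, I would first establish a \emph{uniform minorization / Doeblin condition}: there exist $T_0>0$ and $\kappa\in(0,1)$, independent of the (possibly time-dependent) profile $\bbeta$ and of the starting point, such that the $T_0$-step transition kernels satisfy $P^{\bbeta}_{s,s+T_0}(\bx,\cdot)\ge \kappa\,\pi_*(\cdot)$ for some fixed reference probability measure $\pi_*$ on $[d]^n$ (e.g. uniform). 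This follows because, over a time window of length $T_0$, the probability that a given player makes a prescribed sequence of at most $d$ jumps to reach any target state, while all others do likewise, is bounded below by a positive constant depending only on $d$, $n$, $\fra_l$, $\fra_u$, $T_0$.

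From the Doeblin condition one gets the standard contraction in total variation: for any two initial laws $\pi_0,\pi_1$, the laws of $\bX^{\bbeta}_t$ started from $\pi_0$ and from $\pi_1$ satisfy $\|\calL_{\pi_0}(\bX^{\bbeta}_t)-\calL_{\pi_1}(\bX^{\bbeta}_t)\|_{TV}\le C e^{-c t}$ for constants $C,c>0$ not depending on the profile. Then, writing $g$ with $\|g\|_\infty\le M<\iy$, I would estimate
\begin{align*}
\Big|\E_{\pi_0}\!\int_0^T g(\bX^{\bbeta}_t)\,dt - \E_{\pi_1}\!\int_0^T g(\bX^{\bbeta}_t)\,dt\Big|
&\le \int_0^T \big|\E_{\pi_0}g(\bX^{\bbeta}_t)-\E_{\pi_1}g(\bX^{\bbeta}_t)\big|\,dt\\
&\le \int_0^T 2M\,\|\calL_{\pi_0}(\bX^{\bbeta}_t)-\calL_{\pi_1}(\bX^{\bbeta}_t)\|_{TV}\,dt
\le \int_0^T 2MC e^{-ct}\,dt \le \frac{2MC}{c}.
\end{align*}
Dividing by $T$ and letting $T\to\iy$, the right-hand side vanishes, so the two $\limsup$'s coincide.

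The main technical obstacle is the uniformity of the mixing estimate over \emph{all} Markovian profiles $\bbeta\in\calA^{(n)}$, including genuinely time-inhomogeneous ones: one cannot simply invoke a single stationary distribution and its spectral gap, since the generator changes with $t$. The clean way around this is the Doeblin/minorization route sketched above, which is robust to time-inhomogeneity because the lower bound on the $T_0$-window kernel holds pointwise in time; a time-inhomogeneous version of the Markov-chain contraction lemma (iterating the minorization over successive windows $[kT_0,(k+1)T_0]$) then yields the exponential decay with profile-independent constants. Once that is in hand the rest is the routine bounded-convergence estimate above. An alternative, if one prefers to avoid the inhomogeneous contraction lemma, is to observe that $g$ is bounded so $\frac1T\E_{\pi}\int_0^T g\,dt$ differs from $\frac1T\E_{\pi}\int_{T_0}^{T+T_0} g\,dt$ by $O(1/T)$, and couple the two chains after one minorization window; but the inhomogeneous-Doeblin argument is cleanest to write.
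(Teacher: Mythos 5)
Your proposal is correct and follows essentially the same route as the paper: establish exponential decay (in total variation) of the gap between the laws of $\bX^{\bbeta}_t$ under the two initializations, then use boundedness of $g$ to see the difference of the time-averaged integrals is $O(1/T)$. The only difference is that the paper obtains the exponential ergodicity of the time-inhomogeneous chain by citing a lemma from prior work (with a coupling heuristic), whereas you prove it directly via a uniform Doeblin minorization over time windows — a valid, self-contained substitute for that citation.
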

The proof of this lemma is standard and follows typical ergodicity techniques. For the sake of completeness, its proof is provided in the Appendix.

\begin{remark}
    By \Cref{lem:freedom}, we obtain that the $n$-player ergodic cost $J_0^{n,i}$ is independent of the initial configuration of players. So for a Markovian strategy profile $\bbeta \in \calA^{(n)}$ and configuration $\bx\in [d]^n$, we may write $J_0^{n,i}(\bx, \bbeta)$ as $J_0^{n,i} (\bbeta)$ without confusion. When $\bbeta$ is symmetric, the costs for all the players are the same and we may further simplify the notation and write the latter as $J_0^{n} (\bbeta)$.
\end{remark}

For any Markovian strategy profile $\bbeta:=(\beta^i)_{i\in[n]} \in \calA^{(n)}$, denote by $(\bX^{\bbeta}_t)_{t\ge0}$ the jump process of all players using $\bbeta$ and $(X^{\bbeta, i}_t)_{t\ge0}$ 
the position of the $i$-th player. Denote $(\mu^{\bbeta}_t)_{t\ge0}$ as the empirical measure of $(\bX^{\bbeta}_t)_{t\ge0}$ and, after removing the $i$-th player as $(\mu^{\bbeta, i}_t)_{t\ge0}$. The empirical distribution $\mu^{\bbeta}$ has the dynamics:
\begin{align}\label{eq:mu_beta}
\mu_t^{\bbeta} = \mu_0^{\bbeta} + \int_0^t \sum_{x,y\in [d]} \frac{e_y - e_x}{n} dK_{xy}^{\bbeta} (s),
\end{align} 
where:
\begin{align}\label{eq:K}
K_{xy}^{\bbeta} (t) = \int_0^t \int_{\A^{d\times d}} \1_{\Big\{\Big[0, \; \sum_{i\in [n]}\1_{\{X^{\bbeta, i}_{s^-} = x\}} \beta^i_{xy}(s^-, \bX^{\bbeta}_{s^-})\Big)\Big\}} \calK_{xy}^{\bbeta}(ds, dr),
\end{align} 
and $\calK^{\bbeta}_{xy}$ is a Poisson random measure with jump intensity given by:
\[
\tilde \nu (A) := \sum_{x,y \in [d]} \text{Leb} (A\cap \A_{xy}), \quad \A_{xy} := \{ u\in \A^{d\times d} \mid u_{wz} = 0\quad \forall (w,z) \neq (x,y)\}.
\] We note that the compensator for this random measure is:
\[
\int_0^t\sum_{i\in [n]}\1_{\{X^{\bbeta, i}_s = x\}} \beta^i_{xy}(s, \bX^{\bbeta}_{s})ds.
\]

Now with the requisite background, we introduce the main theoretical results of the paper. The first, \Cref{thm:epsilon_equilibrium}, asserts that any solution to the time-dependent ergodic MFG system gives an approximate Nash equilibrium when applied in the $n$-player game; moreover, for the empirical measure associated with this profile, we have the propagation of chaos. \blue{The theorem addresses the system $(\check \vr, \check u, \check \mu)$, but it does not necessitate the full set of assumptions; instead, it relies on a reduced set that does not guarantee the existence of a solution to the system. Therefore, we make this assumption explicit.}


\begin{theorem}\label{thm:epsilon_equilibrium}
     Assume the reduced assumptions hold. Let $\eta \in\calP([d])$ and assume \eqref{erg_MFG_t} has a solution $(\check \vr, \check u, \check \mu)$ with initial condition $\check\mu(0) = \eta$, as with the examples given in Remarks \ref{rmk:U_0_soln} and \ref{rmk:station_soln}. Then there exists $C>0$, independent of $n$ and $\eta$, such that the Markovian strategy profile:
    \begin{align*}
        \check{\Gamma} := \big(\check \gamma \big)_{i\in [n]},  \quad \check \gamma(t) := [\gamma^*_y(x, \Delta_x \check u(t))]_{x,y\in [d]},
    \end{align*} is a Markovian $(C/\sqrt{n})$-Nash equilibrium. Moreover, the cost for each player in the game, $J_0^{n} (\check \Gamma)$, satisfies
    \begin{align}\label{eqn:cost_convergence}
   | J_0^{n} (\check \Gamma) - \check \vr |\le \frac{C}{\sqrt{n}}.
    \end{align} Also, there exists $c>0$ such that for any $\pi\in\calP([d]^n)$ and any $n\in\N_{>1}$,
    \begin{align}\label{eqn:pi_thm1}
        \E_{\pi} |\mu^{\check \Gamma}_t - \check \mu(t)| \leq Ce^{-ct} \E_\pi |\mu^{\check \Gamma}_0 - \check \mu(0)| + \frac{C}{\sqrt{n}}.
    \end{align} Suppose $(X^{\check \Gamma, i}_0)_{i\in [n]}$ are exchangeable, though not necessarily independent, with $X^{\check \Gamma, i}_0 \sim \check \mu (0) = \eta$ for all $i\in [n]$, \blue{where $\check\mu(0)$ is not on the boundary of $\calP([d])$}. For $\pi = \check \pi$, the joint distribution induced by this initialization, we have from \eqref{eqn:pi_thm1} that: \begin{align}\label{eq:propagation_check}
    \sup_{t\in\R_+} \E_{\check \pi} |\mu^{\check \Gamma}_t - \check \mu(t)| \leq \frac{C}{\sqrt{n}},
    \end{align}  
    
\end{theorem}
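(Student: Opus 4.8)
The plan is to establish, in order, the propagation-of-chaos bounds \eqref{eqn:pi_thm1}--\eqref{eq:propagation_check}, then the cost identity \eqref{eqn:cost_convergence}, and finally the $(C/\sqrt n)$-Nash property, each step feeding the next. For \eqref{eqn:pi_thm1} I would apply \Cref{lem:pre_duality} to the profile $\bbeta=\check\Gamma$: in the bound it furnishes, the only term not already of the advertised form $Ce^{-ct}\,\E_\pi\abs{\mu^{\check\Gamma}_0-\check\mu(0)}+C/\sqrt n$ is the time-integral of the gap between the profile's rates and the MFG system--derived rates $[\gamma^*_y(x,\Delta_x\check u(t))]_{x,y}$, and for $\bbeta=\check\Gamma$ this gap is identically zero by construction --- this is exactly the simplification noted in the introduction --- so \eqref{eqn:pi_thm1} follows at once, with constants independent of $n$ and $\eta$. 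To pass from \eqref{eqn:pi_thm1} to \eqref{eq:propagation_check} it remains to bound the initial fluctuation $\E_{\check\pi}\abs{\mu^{\check\Gamma}_0-\eta}$ by $C/\sqrt n$; this is a concentration estimate for the empirical measure of the exchangeable, $\eta$-marginal initial positions, supplied by \Cref{lem:concentration_inequality_1}, and it is here that the hypothesis $\check\mu(0)\notin\partial\calP([d])$ is used. Bounding $e^{-ct}\le 1$ in \eqref{eqn:pi_thm1} then gives \eqref{eq:propagation_check}.

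The structural fact driving the cost estimates is that, under $\check\Gamma$, player $j$'s rate $\check\gamma^j(t,\bx)=[\gamma^*_y(x_j,\Delta_{x_j}\check u(t))]_y$ depends only on $t$ and on player $j$'s own coordinate $x_j$. Thus each player $j$ is an autonomous time-inhomogeneous jump chain on $[d]$ with the \emph{same} law as the MFG representative player $\calX^{\check\gamma}$, and, crucially, this remains true for every $j\ne i$ when a single player $i$ switches to an arbitrary $\beta^i\in\calA^{n,i}$: such a deviation has no effect whatsoever on the others. By \Cref{lem:freedom} the costs $J_0^{n,i}$ do not depend on the initial configuration, so when evaluating them I may start all players i.i.d.\ $\sim\check\mu(0)$; then the players $j\ne i$ are jointly i.i.d.\ $\sim\check\mu(t)$ at every time $t$, and \Cref{lem:concentration_inequality_1} yields $\E\abs{\mu^{-i}_t-\check\mu(t)}\le C/\sqrt n$ uniformly in $t$, where $\mu^{-i}_t$ denotes the empirical measure of the players other than $i$. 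Expanding $J_0^n(\check\Gamma)$ under this initialization, the ``$f$ plus $F(\cdot,\check\mu(t))$'' part of the running cost averages to $\check\vr$ because $((\gamma^*_y(x,\Delta_x\check u(t)))_{x,y},\check\mu)$ is an ergodic MFE (the discussion after \eqref{erg_MFG_t}, cf.\ \Cref{prop:me_properties}); subtracting and using that $F(x,\cdot)$ is Lipschitz gives $\abs{J_0^n(\check\Gamma)-\check\vr}\le C_{L,F}\limsup_{T\to\iy}\tfrac1T\int_0^T\E\abs{\mu^{\check\Gamma,i}_t-\check\mu(t)}\,dt\le C/\sqrt n$, which is \eqref{eqn:cost_convergence}.

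For the Nash property, fix $i$ and $\beta^i\in\calA^{n,i}$ and evaluate $J_0^{n,i}([\check\Gamma^{-i};\beta^i])$ with i.i.d.\ $\check\mu(0)$ initialization. Replacing $F(X^i_t,\mu^{-i}_t)$ by $F(X^i_t,\check\mu(t))$ costs at most $C_{L,F}\,\E\abs{\mu^{-i}_t-\check\mu(t)}\le C/\sqrt n$ uniformly in $t$, so
\[
J_0^{n,i}([\check\Gamma^{-i};\beta^i])\ \ge\ \limsup_{T\to\iy}\frac1T\,\E\int_0^T\Big[f\big(X^i_t,\beta^i(t,\bX_t)\big)+F\big(X^i_t,\check\mu(t)\big)\Big]\,dt\ -\ \frac{C}{\sqrt n}.
\]
Conditioning on the exogenous paths $\bX^{-i}$ identifies the first term with the long-run-average cost of \emph{some} admissible (time- and environment-dependent, but $\A$-valued) control of the chain $X^i$ against the fixed deterministic flow $\check\mu$; since Markovian controls are optimal for the long-run-average criterion and, by \Cref{lem:freedom}, the initial law is irrelevant, this is bounded below by $\inf_{\alpha\in\calA}J_0(\alpha,\check\mu)=\check\vr$. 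Combining with \eqref{eqn:cost_convergence} gives $J_0^n(\check\Gamma)\le\check\vr+C/\sqrt n\le J_0^{n,i}([\check\Gamma^{-i};\beta^i])+2C/\sqrt n$, and relabeling $C$ completes the proof.

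The hard part is the lower bound in the previous paragraph: one must argue that player $i$ cannot exploit her observation of the other players' current states to beat the mean field value $\check\vr$ --- that the ergodic verification argument for the control problem against $\check\mu$ survives allowing the control to depend on an exogenous random environment --- and this must hold uniformly over all deviations $\beta^i$. This is exactly where \Cref{lem:freedom} (irrelevance of the initial law) and \Cref{lem:concentration_inequality_1} (to obtain the sharp $1/\sqrt n$ rate rather than a weaker one) are indispensable; everything else is Lipschitz bookkeeping together with the observation that a single deviation does not perturb the non-deviating players.
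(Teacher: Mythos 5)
Your proposal is correct and follows essentially the same route as the paper: Lemma \ref{lem:pre_duality} with vanishing integral term plus Lemma \ref{lem:concentration_inequality_1} for the propagation-of-chaos bounds, then Lemma \ref{lem:freedom} to fix the initialization, a Lipschitz swap of $\mu^{-i}_t$ for $\check\mu(t)$ costing $C/\sqrt n$, and the MFE optimality of $\check\gamma$ against $\check\mu$ to lower-bound the deviating player's cost. Your treatment of the key lower bound is in fact slightly more explicit than the paper's (which compresses it into the assertion around \eqref{eqn:3}), correctly isolating why a deviation observing the others' states cannot beat $\inf_\alpha J_0(\alpha,\check\mu)=\check\vr$, since the non-deviating players are unaffected by the deviation.
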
 

\begin{remark}
    The assumption that $\check \mu(0)$ is not on the boundary of $\calP([d])$ is so that we can assume there exists some constant $q>0$ for which $\check \mu_x(t) > q$ for all $x\in [d]$ and for $t\in\R_+$. When $\check \mu(0)$ is on the boundary, we can replace the final display with:
    \[
    \sup_{t\geq 1} \E_{\check \pi} |\mu_t^{\check \Gamma} - \check \mu(t)| \leq \frac{C}{\sqrt{n}},
    \] though the choice of $1$ is arbitrary---any positive constant would do. This is because we must allow a moment for the measure to enter the interior of the simplex. Since the transition rates between any states are bounded away from zero, the deterministic process $\check \mu$ will always move away from the boundary. Throughout the remainder of the paper, this technical point will remain implicit in any statement of a supremum over all $t\in\R_+$, except for \Cref{lem:concentration_inequality_1}, where we note a few details.
\end{remark}

\begin{remark}
For the Markovian strategy profile $\check \Gamma$, each player plays according to her own state and a deterministic flow of measures. We emphasize that we allow the deviating player to use a strategy based on the current time as well as the knowledge of the current states of all the players. As we will show, this extra information does not help the deviating player that much, since they can only improve their cost by at most $C/\sqrt{n}$.
\end{remark}

\begin{remark}
    Recent work by H\"ofer and Soner \cite{soner} studies an example of non-unique solutions to \eqref{erg_MFG_t} when $d=2$ under a specified anti-monotone mean field running cost $F$, and quadratic running cost $f$. They show that, for certain choices of constants in the rates $\A$ and by scaling $F$, there are infinitely-many distinct MFE. Fortunately, \Cref{thm:epsilon_equilibrium} applies to these examples as well. Since we only require the reduced assumptions, we can accommodate these examples and prove that the infinitely-many equilibria studied in \cite{soner} constitute $(C/\sqrt{n})$-Nash equilibria as well.
\end{remark}

With Theorem \ref{thm:epsilon_equilibrium} in hand, which provides an asymptotic Markovian Nash equilibrium based on the time-dependent ergodic MFG system \eqref{erg_MFG_t}, we recall that the stationary ergodic MFG system \eqref{erg_MFG} is a special case of \eqref{erg_MFG_t} with the initial state $\eta=\bar\mu$. Consequently, strategies derived from the stationary system also yield an asymptotic Markovian Nash equilibrium. Namely, let $ \mu^{\bar\Gamma}_t$ be the empirical distribution of $n$ players using the time-stationary Markovian strategy profile:
\begin{align}\label{eqn:stationary_control}
    \bar\Gamma := (\bar\gamma)_{i\in [n]} \quad \text{ where } \quad \bar\gamma := [\gamma^*_y(x, \Delta_x \bar u)]_{x,y\in [d]}.
\end{align} 
So, when $\check \Gamma = \bar\Gamma$, we can apply \Cref{thm:epsilon_equilibrium} to get the following corollary.

\begin{corollary}\label{cor:stationary_eq}
    Assume the reduced assumptions hold. Then, the stationary ergodic MFG system \eqref{erg_MFG} has a solution $(\bar\vr, \bar u, \bar \mu)$. Additionally, there exists $C>0$, independent of $n$, such that the Markovian strategy profile $\bar\Gamma$ is a Markovian $(C/\sqrt{n})$-Nash equilibrium.  Moreover, the cost for each player in the game, $J_0^{n} (\bar \Gamma)$, satisfies
    \begin{align}\label{eqn:cost_convergence_cor}
    |J_0^{n} (\bar \Gamma) - \bar \vr| \leq \frac{C}{\sqrt{n}}.
    \end{align} Also, for any $\pi \in\calP([d]^n)$,
    \begin{align}\label{eqn:pi_cor}
        \E_{\pi} |\mu^{\bar \Gamma}_t - \bar \mu| \leq Ce^{-ct}\E_\pi |\mu_0^{\bar \Gamma} - \bar \mu | +\frac{C}{\sqrt{n}}.
    \end{align} Suppose $(X^{\bar\Gamma, i}_0)_{i\in [n]}$ are exchangeable, though not necessarily independent, with $X^{\bar\Gamma, i}_0 \sim \bar\mu$ for all $i\in [n]$. For $\pi = \bar \pi$, the joint distribution induced by this initialization, we have from \eqref{eqn:pi_cor} that:
    \begin{align*}
        \sup_{t\in\R_+} \E_{\bar \pi} |\mu^{\bar \Gamma}_t - \bar \mu| \leq \frac{C}{\sqrt{n}}.
    \end{align*}
\end{corollary}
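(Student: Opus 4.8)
The plan is to obtain \Cref{cor:stationary_eq} as a direct specialization of \Cref{thm:epsilon_equilibrium} to the stationary solution of \eqref{erg_MFG}. First I would invoke \Cref{prop:erg_MFG_red_full}: under the reduced assumptions, \eqref{erg_MFG} admits a solution $(\bar\vr,\bar u,\bar\mu)$, and $((\gamma^*_y(x,\Delta_x\bar u))_{x,y\in[d]},\bar\mu)$ is an ergodic MFE with $\bar\vr = J_0((\gamma^*_y(x,\Delta_x\bar u))_{x,y\in[d]},\bar\mu)$. This already gives the first assertion of the corollary and pins down the profile $\bar\Gamma$ of \eqref{eqn:stationary_control}. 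Next, by \Cref{rmk:station_soln} the constant-in-time triple $(\bar\vr,\bar u,\bar\mu)$ solves \eqref{erg_MFG_t} with initial condition $\check\mu(0)=\bar\mu$, and since $\bar u$ is a fixed vector it is bounded, so this is an admissible solution in the sense of \Cref{rmk:check_u_bounded}. Hence \Cref{thm:epsilon_equilibrium} applies with $\eta=\bar\mu$ and with $\check u(t)\equiv\bar u$, $\check\mu(t)\equiv\bar\mu$, $\check\vr=\bar\vr$ for all $t\in\R_+$.

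With this identification, the profile $\check\Gamma$ of \Cref{thm:epsilon_equilibrium} coincides with $\bar\Gamma$, and each conclusion of that theorem transfers verbatim: the assertion that $\check\Gamma$ is a Markovian $(C/\sqrt n)$-Nash equilibrium becomes the second assertion; the cost bound \eqref{eqn:cost_convergence} becomes \eqref{eqn:cost_convergence_cor} after substituting $\check\vr=\bar\vr$; and \eqref{eqn:pi_thm1} becomes \eqref{eqn:pi_cor} after substituting $\check\mu(t)\equiv\bar\mu$. Because every constant $C$ (and $c$) produced by \Cref{thm:epsilon_equilibrium} is independent of $n$ and of $\eta$, the constants in the corollary inherit this independence.

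For the final supremum estimate, it remains to verify the hypothesis of \Cref{thm:epsilon_equilibrium} that $\check\mu(0)=\bar\mu$ is not on the boundary of $\calP([d])$, which is exactly what licenses passing from \eqref{eqn:pi_thm1} to \eqref{eq:propagation_check}. Here this is automatic: $\bar\mu$ is the invariant distribution of a jump process on the finite, fully connected state space $[d]$ whose transition rates are bounded below by $\fra_l>0$, so $\bar\mu_x>0$ for every $x\in[d]$, i.e.\ $\bar\mu$ lies in the interior of the simplex and the constant $q>0$ of the accompanying remark may be taken to be $\min_x\bar\mu_x$. Specializing \eqref{eq:propagation_check} to $\check\mu(t)\equiv\bar\mu$ and to an exchangeable initialization with $X^{\bar\Gamma,i}_0\sim\bar\mu$ then yields $\sup_{t\in\R_+}\E_{\bar\pi}|\mu^{\bar\Gamma}_t-\bar\mu|\le C/\sqrt n$, completing the argument. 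There is no substantive obstacle here beyond bookkeeping; the only point requiring a remark is the interiority of $\bar\mu$, and that follows immediately from the uniform positivity of the rates, so I expect the entire proof to be short.
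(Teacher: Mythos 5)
Your proposal is correct and follows exactly the paper's route: the corollary is obtained by specializing \Cref{thm:epsilon_equilibrium} to the stationary solution $(\bar\vr,\bar u,\bar\mu)$ of \eqref{erg_MFG}, viewed via \Cref{rmk:station_soln} as the solution of \eqref{erg_MFG_t} with $\check\mu(0)=\bar\mu$, so that $\check\Gamma=\bar\Gamma$. Your added verification that $\bar\mu$ lies in the interior of the simplex (from the uniform lower bound $\fra_l>0$ on the rates) is a correct and worthwhile detail that the paper leaves implicit.
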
 

The next result shows that the time-dependent Markovian strategy using the master equation's potential function $U_0$ is also a ($C/\sqrt{n}$)-Nash equilibrium. For brevity's sake, we will write the Markovian strategy profile as:
\begin{align}\label{eqn:me_control}
    \Gamma_0 := (\gamma_0 )_{i\in [n]}, \quad \gamma_0(t) := [\gamma^*_y(x, \Delta_x U_0 (\cdot, \mu^{\Gamma_0,-x}_t))]_{x,y\in [d]} .
\end{align} where $(\mu^{\Gamma_0, -x}_t)_{t\geq 0}$ is the empirical measure $(\mu^{\Gamma_0}_t)_{t\geq 0}$, but with one fewer player in state $x$.


Note that in \eqref{eqn:me_control}, we substitute the empirical distribution into the solution of the master equation, whereas in \eqref{eqn:stationary_control}, the strategy is based solely on the stationary MFG solution. 
This difference implies that the following theorem cannot be treated as a corollary and requires a distinct proof.

\begin{theorem}\label{thm:epsilon_equilibrium_2}
    Assume the full assumptions are in force. There exists $C>0$, independent of $n$, such that the strategy profile $\Gamma_0$ is a Markovian $(C/\sqrt{n})$-Nash equilibrium. Moreover, the cost for each player in the game, $J_0^{n} ( \Gamma_0)$, satisfies 
    \[
    |J_0^{n} ( \Gamma_0) - \vr | \leq \frac{C}{\sqrt{n}}.
    \] With $\pi_0$, the stationary distribution of $\bX^{\Gamma_0}$, and for all $\pi\in \calP([d]^n)$:
    \begin{align}\label{eqn:pi_thm2}
    \limsup_{T\to\iy}\E_{\pi} |\mu^{\Gamma_0}_T - \bar \mu| = \E_{\pi_0} |\mu^{\Gamma_0}_0 - \bar \mu| \leq \frac{C}{\sqrt{n}}.
    \end{align} 
\end{theorem}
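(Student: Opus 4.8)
The plan is to follow the roadmap sketched before the statement: first reduce to \emph{stationary} Markovian deviations, then establish propagation of chaos for $\Gamma_0$ (and for singly--deviated profiles) by combining \Cref{lem:pre_duality} with \Cref{lem:miracle} to defeat the feedback problem, and finally compare the $n$--player cost with the MFG value $\vr$ via an ergodic verification argument.

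\textbf{Step 1 (reduction to stationary deviations).} Fix $i\in[n]$ and suppose every player $j\neq i$ uses $\gamma_0$. Since $\gamma_0$ as defined in \eqref{eqn:me_control} is a function of the current configuration alone, player $i$ then faces a time--homogeneous ergodic control problem on the finite state space $[d]^n$, with rates bounded in $[\fra_l,\fra_u]$ and an irreducible chain. By introducing an auxiliary discounted problem and sending the discount factor to zero --- this is \Cref{lem:stat_is_enough} --- one obtains that an optimal response exists within $\calA^{n,i}_S$ and that the infimum over $\calA^{n,i}$ equals the infimum over $\calA^{n,i}_S$. Hence it suffices to prove $J_0^{n,i}(\Gamma_0)\le J_0^{n,i}([\Gamma_0^{-i};\beta])+C/\sqrt n$ for stationary $\beta\in\calA^{n,i}_S$; for the whole argument we may then work under a stationary distribution, as \Cref{lem:miracle} requires.

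\textbf{Step 2 (propagation of chaos against the feedback problem).} Let $\bbeta$ be either $\Gamma_0$ or a singly--deviated profile $[\Gamma_0^{-i};\beta]$ with $\beta$ stationary. Applying \Cref{lem:pre_duality} with $\check\mu=\bar\mu$ bounds $\E|\mu^{\bbeta,i}_t-\bar\mu|$ by a term decaying exponentially in the initial discrepancy, an $O(1/\sqrt n)$ term, and an integral (against an exponential kernel) of the gap between the rates used by $\bbeta$ and the stationary MFG rates $\bar\gamma=[\gamma^*_y(x,\Delta_x\bar u)]_{x,y}$. For a non--deviating player this gap is $\gamma^*(x,\Delta_x U_0(\cdot,\mu^{\bbeta,-x}_s))-\gamma^*(x,\Delta_x U_0(\cdot,\bar\mu))$, which by Lipschitz continuity of $\gamma^*$, $U_0$ and $D^\eta U_0$ is $O(|\mu^{\bbeta,-x}_s-\bar\mu|)=O(|\mu^{\bbeta}_s-\bar\mu|+1/n)$, while the single deviating player contributes only $O(1/n)$. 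The resulting estimate is therefore self--referential: this is precisely the feedback loop, which in the infinite horizon cannot be closed by Gronwall (and indeed \cite{MR4083905} exhibits multiple non--Markovian limits). \Cref{lem:miracle} breaks the loop: it is a stochastic rendering of MFG duality, obtained by applying the jump It\^o formula to a functional built from $U_0$ evaluated along $\mu^{\bbeta}$, substituting the master equation \eqref{ME} to cancel the first--order terms, using Lasry--Lions monotonicity of $F$ (and of $U_0$) together with the concavity of $H$ from ($B_2$) to fix the sign, and bounding the finitely many compensator/martingale corrections --- each mean--zero or $O(1/n)$ --- via the Bernoulli concentration estimate \Cref{lem:concentration_inequality_1}. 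Under the stationary distribution $\pi_0$ the time--boundary terms vanish and one is left with $\E_{\pi_0}|\mu^{\bbeta,i}_0-\bar\mu|\le C/\sqrt n$; combined with exponential ergodicity (in the spirit of \Cref{lem:freedom}) this also yields $\limsup_{T\to\iy}\E_\pi|\mu^{\Gamma_0}_T-\bar\mu|=\E_{\pi_0}|\mu^{\Gamma_0}_0-\bar\mu|\le C/\sqrt n$, which is \eqref{eqn:pi_thm2}.

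\textbf{Step 3 (comparison with the MFG value) and the main obstacle.} For a deviated profile, replace $F(X^{i}_t,\mu^{\bbeta,i}_t)$ by $F(X^i_t,\bar\mu)$ at cost $O(1/\sqrt n)$ using Step 2 and ($A_3$); then $X^i$ seen against the frozen measure $\bar\mu$ is exactly the representative player's dynamics, and the ergodic verification inequality $\vr=H(x,\Delta_x\bar u)+F(x,\bar\mu)\le f(x,a)+a\cdot\Delta_x\bar u+F(x,\bar\mu)$ together with the martingale $\bar u(X^i_t)-\bar u(X^i_0)-\int_0^t\beta\cdot\Delta_{X^i_s}\bar u\,ds$ (and boundedness of $\bar u$) gives $J_0^{n,i}([\Gamma_0^{-i};\beta])\ge\vr-C/\sqrt n$. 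For $\Gamma_0$ itself, the same substitution plus the identity $f(x,\gamma^*(x,p))=H(x,p)-\gamma^*(x,p)\cdot p$ and Lipschitz continuity of $H,\gamma^*,U_0$ show that the running rate of $X^i$ differs from $\bar\gamma(X^i_t)$ by $O(1/\sqrt n)$ in expectation, whence, writing $g(x)=f(x,\bar\gamma(x))+F(x,\bar\mu)=\bar\vr-\bar\gamma(x)\cdot\Delta_x\bar u$ and using the martingale for $\bar u(X^i_t)$ again, $J_0^{n,i}(\Gamma_0)\le\vr+C/\sqrt n$ (the matching lower bound follows from the verification inequality applied with the control $\gamma_0$), so $|J_0^{n}(\Gamma_0)-\vr|\le C/\sqrt n$. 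Chaining the two bounds gives $J_0^{n,i}(\Gamma_0)\le\vr+C/\sqrt n\le J_0^{n,i}([\Gamma_0^{-i};\beta])+2C/\sqrt n$, i.e.\ $\Gamma_0$ is a $(C/\sqrt n)$--Nash equilibrium. The decisive difficulty is Step 2: setting up the duality identity of \Cref{lem:miracle} in the random, empirical--measure setting and verifying that every piece of $n$--dependent error it produces is genuinely $O(1/\sqrt n)$ rather than being amplified through the feedback loop.
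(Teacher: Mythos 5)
Your proposal is correct and follows essentially the same route as the paper: reduction to stationary deviations via \Cref{lem:stat_is_enough}, propagation of chaos under the stationary distribution by combining \Cref{lem:pre_duality} with the duality estimate of \Cref{lem:miracle} (plus Jensen to pass from the $O(1/n)$ squared bound to $O(1/\sqrt n)$), and the cost comparison by swapping the empirical measure for $\bar\mu$ and invoking optimality of the MFE. The only differences are cosmetic (the paper performs the stationarity reduction at the end rather than the start, and proves the value comparison by citing the MFE optimality from \Cref{prop:me_properties} rather than re-running the verification inequality for $\bar u$).
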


As with $\check \Gamma$ and $\bar\Gamma$, we allow a player deviating from $\Gamma_0$ to use full information about their system, but that this information will not benefit them by more than $C/\sqrt{n}$.

\begin{remark}
    Note that the convergence of the empirical measures here is established under the stationary distribution $\pi_0$, or, in the limit, for an arbitrary initial distribution. Recently, Cohen and Huffman \cite{coh-huf2025} demonstrated a uniform-in-time weak convergence of the empirical measure for particle systems to a limiting flow of measures, which is governed by the Kolmogorov equation of a McKean--Vlasov (non-linear) Markov chain. These general results are applied there to the model considered here.
Specifically, assuming that all the players are initialized independently according to $\check\mu(0)$, then for any test function $\Phi: \mathcal{P}([d]) \to \mathbb{R}$ with Lipschitz gradient, we show that
\begin{equation}\notag
	\sup_{t\in\R_+}\left\lvert \EE \left[ \Phi(\mu^{\Gamma_0}_t)\right] - \Phi(\check\mu(t))   \right\rvert  \leq \frac{C}{n}\qquad\text{and}\qquad \sup_{t\in\R_+}\left\lvert \EE \left[ \Phi(\mu^{\check\Gamma}_t)\right] - \Phi(\check\mu(t))   \right\rvert  \leq \frac{C}{n}.
\end{equation}
The prototypical example of $\Phi$ is the squared Euclidean distance, $\eta\mapsto\lvert \eta  - \bar\mu \rvert^2$. 
    
\end{remark}
\color{black}\begin{remark}\label{rem10}
    This paper does not address the convergence of exact Nash equilibria to mean field equilibria. Our method relies on the Lipschitz continuity of $U_0$, the solution to the master equation. 
    While the structure of the Nash system resembles that of the master equation, connecting the two falls outside the scope of this work. The first author is actively working on this problem, which remains open (see \cite{MR4083905}).

Furthermore, while we believe that the convergence rate of the values $\varrho^n \to \varrho$ could be improved to $O(1/n)$, as seen in some finite horizon Markovian cases \cite{CardaliaguetDelarueLasryLions, bay-coh2019, cec-pel2019}
or some ergodic linear-quadratic \cite[Remark 5.3]{bar-pri2014}
, our probabilistic approach yields an $O(1/\sqrt{n})$ rate due to the propagation of chaos estimates, which is tight and consistent for example with the propagation of chaos in the previously mentioned works and with the finite-state mean field model \cite{ying2018}. Improving the rate for $|\varrho^n - \varrho|$ would require alternative methods, such as considering long finite horizons or discounted master equations, and leveraging exponential ergodicity. This approach, which the first author is actively exploring, requires different tools than those used in the present analysis.
\end{remark}
\color{black}

\subsection{Large deviations results} 
We start with a remark about $\bar\mu$ and how it is an equilibrium for Kolmogorov's equation. Specifically, the large deviation results we provide require a \textit{globally asymptotically stable equilibrium} and we flesh out precisely what we mean by this. Then, we introduce the laws featured in the large deviations result and the good rate functions for each law. \blue{The (large deviation) rate functions represent the cost of transferring the system from one distribution to another, with their specific forms depending on the time horizon and the spaces involved: whether we are dealing with distributions in \(\mathcal{P}([d])\) or flows of measures over \([0,T]\).
} \textit{For all large deviation results, we assume the full assumptions are in force.}

\begin{proposition}\label{prop:erg_stability}
Let $(\bar\vr, \bar u, \bar\mu)$ be the unique stationary solution to \eqref{erg_MFG}. Then, $\bar\mu$ is the unique globally asymptotically stable equilibrium for \eqref{erg_FP_t}. 
\end{proposition}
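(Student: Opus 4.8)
The plan is to verify separately the three defining properties of a \emph{unique globally asymptotically stable equilibrium} for \eqref{erg_FP_t}: (i) $\bar\mu$ is an equilibrium point; (ii) it is the only equilibrium in $\calP([d])$; (iii) every trajectory of \eqref{erg_FP_t} converges to $\bar\mu$ (global attractivity) and trajectories starting near $\bar\mu$ stay near it (Lyapunov stability). Property (i) is immediate: by \Cref{prop:me_properties} the stationary solution satisfies $U_0(x,\bar\mu)=\bar u_x$, hence $\Delta_y U_0(\cdot,\bar\mu)=\Delta_y\bar u$, and so the right-hand side of \eqref{erg_FP_t} evaluated at $\bar\mu$ is $\sum_y \bar\mu_y\gamma^*_x(y,\Delta_y\bar u)$, which vanishes because $(\bar\vr,\bar u,\bar\mu)$ solves the Kolmogorov part of \eqref{erg_MFG}.

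For property (ii), I would show that \emph{any} equilibrium $\mu^*\in\calP([d])$ of \eqref{erg_FP_t}, together with $u^*:=U_0(\cdot,\mu^*)$ and $\vr$, solves the stationary system \eqref{erg_MFG}; uniqueness of $\mu^*$ then follows from the uniqueness of \eqref{erg_MFG} under the full assumptions (\Cref{prop:erg_MFG_red_full}). The Kolmogorov part of \eqref{erg_MFG} is precisely the statement that $\mu^*$ is an equilibrium of \eqref{erg_FP_t}, so it holds by assumption (with $\Delta_y U_0(\cdot,\mu^*)=\Delta_y u^*$). For the Bellman part, evaluate the master equation \eqref{ME} at $\eta=\mu^*$; this is legitimate since $D^\eta U_0$ exists by \Cref{prop:me_properties}. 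The coupling term $\sum_{y,z}\mu^*_y D^\eta_{yz}U_0(x,\mu^*)\gamma^*_z(y,\Delta_y U_0(\cdot,\mu^*))$ splits into two pieces after writing $D^\eta_{yz}=\partial_z-\partial_y$: the piece carrying $\partial_y U_0(x,\mu^*)$ is $\sum_y\mu^*_y\partial_y U_0(x,\mu^*)\bigl(\sum_z\gamma^*_z(y,\cdot)\bigr)=0$, because $\gamma^*(y,\cdot)\in\A^d_{-y}$ has coordinates summing to zero; the piece carrying $\partial_z U_0(x,\mu^*)$ is $\sum_z\partial_z U_0(x,\mu^*)\bigl(\sum_y\mu^*_y\gamma^*_z(y,\cdot)\bigr)=0$, because the equilibrium condition reads $\sum_y\mu^*_y\gamma^*_w(y,\Delta_y u^*)=0$ for \emph{every} $w\in[d]$. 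Hence \eqref{ME} reduces to $\vr=H(x,\Delta_x u^*)+F(x,\mu^*)$, the Bellman part of \eqref{erg_MFG}, and uniqueness gives $\mu^*=\bar\mu$.

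For property (iii), observe that by \Cref{rmk:U_0_soln} the solution $\mu_0$ of \eqref{erg_FP_t} with $\mu_0(0)=\eta$ coincides with the measure component of the solution $(\vr,u_0,\mu_0)$ of \eqref{erg_MFG_t}; thus \Cref{prop:me_properties} gives $|\mu_0(t)-\bar\mu|\le Ce^{-ct}$ with $C,c>0$ independent of $\eta$, which in particular yields $\mu_0(t)\to\bar\mu$ and hence global attractivity. For Lyapunov stability, fix $\eps>0$ and choose $T=T(\eps)$ with $Ce^{-cT}\le\eps/2$. The right-hand side $\Phi$ of \eqref{erg_FP_t} is Lipschitz on the compact simplex $\calP([d])$ with some constant $L$ (using ($A_2$) and the Lipschitz regularity of $U_0$ and $D^\eta U_0$ from \Cref{prop:me_properties}), and $\Phi(\bar\mu)=0$, so Gronwall's inequality gives $|\mu_0(t)-\bar\mu|\le|\eta-\bar\mu|e^{LT}$ on $[0,T]$. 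Choosing $\delta:=\tfrac12\eps e^{-LT}$ then forces $\sup_{t\ge0}|\mu_0(t)-\bar\mu|<\eps$ whenever $|\eta-\bar\mu|<\delta$. Combining (i)--(iii) proves the proposition.

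\textbf{Main obstacle.} The only delicate point is the index bookkeeping in property (ii): one must be careful that the equilibrium condition of \eqref{erg_FP_t} is the family $\sum_y\mu^*_y\gamma^*_w(y,\cdot)=0$ indexed by all $w\in[d]$, so that it can be inserted into the $\partial_z U_0$ piece of the coupling term, and that the coordinate-sum-zero property of $\gamma^*$ kills the other piece — this is exactly the computation already sketched in \Cref{rmk:U_0_soln}. Everything else is a routine consequence of the exponential convergence estimate in \Cref{prop:me_properties} together with continuous dependence on initial data; an alternative (but longer) route to (iii) would build a strict Lyapunov function directly from the Lasry--Lions monotonicity of $U_0$ in ($B_1$).
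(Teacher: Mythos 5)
Your proof is correct, but it takes a genuinely different route from the paper's on both halves of the statement. For uniqueness, you show that \emph{any} equilibrium $\mu^*$ of \eqref{erg_FP_t} lifts, via the master equation \eqref{ME}, to a solution $(\vr,U_0(\cdot,\mu^*),\mu^*)$ of the stationary system \eqref{erg_MFG} — the cancellation of the coupling term is exactly right, since $\sum_z\gamma^*_z(y,\cdot)=0$ kills one piece and the equilibrium identity $\sum_y\mu^*_y\gamma^*_w(y,\cdot)=0$ for every $w$ kills the other — and then you invoke uniqueness from \Cref{prop:erg_MFG_red_full}. The paper instead only shows that any \emph{globally asymptotically stable} equilibrium solves \eqref{erg_MFG}, by passing to the $t\to\infty$ limit in the Bellman equation of \eqref{erg_MFG_t}; your argument is cleaner and proves the stronger fact that $\bar\mu$ is the unique equilibrium of any kind. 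For attractivity, you quote the exponential decay $|\check\mu(t)-\bar\mu|\le Ce^{-ct}$ already stated in \Cref{prop:me_properties} together with the identification $\mu_0=\check\mu$ from \Cref{rmk:U_0_soln}, which makes the proposition nearly a corollary; the paper deliberately gives an independent argument, bounding $|\Delta_x u_0(t,\cdot)|$ via the Lipschitz continuity of $U_0$ and then reusing the convergence proof of Gomes--Mohr--Souza. You should be aware that your shortcut is only as sound as the provenance of that decay estimate: if the estimate quoted in \Cref{prop:me_properties} is itself obtained (in the cited prior work) from a stability argument of the present kind, leaning on it here would be circular, which is presumably why the authors chose the self-contained route. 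Finally, your Lyapunov-stability step via Gronwall is correct but unnecessary: the paper's remark defines global asymptotic stability for \eqref{erg_FP_t} as global attractivity only.
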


\begin{remark}
    As a reminder, $\bar\mu$ is an equilibrium to \eqref{erg_FP_t} means that $\bar\mu$ satisfies:
    \[
    0 = \sum_{y\in [d]} \bar\mu_y \gamma^*_x(y,\Delta_y U_0 (\cdot, \bar\mu)),
    \] that is, the derivative term in \eqref{erg_FP_t} is $0$. Recalling that $U_0(\cdot,\bar\mu) = \bar u$ from \Cref{prop:me_properties}, we see that this amounts to $\bar\mu$ solving the measure equation from the stationary ergodic MFG system \eqref{erg_MFG}. Finally, to be globally asymptotically stable means that for all initial conditions $\eta \in \calP([d])$, the solution $\mu_0$ to \eqref{erg_FP_t} tends to $\mu_0(t) \to \bar\mu$ as $t\to\iy$. For more about this definition, see \cite[Section~3]{ying2018}. 
\end{remark}

The main theorem deals with large deviations of the empirical measures. We will state the theorem in terms of $\kappa^n \in \{ \mu^{\Gamma_0},  \mu^{\bar\Gamma}\}$ and the corresponding rates $a^\kappa \in \{\gamma_0, \bar\gamma\}$  from \eqref{eqn:me_control} and \eqref{eqn:stationary_control}. 
We provide three large deviation results, for:
\[
\scrL^n_T := \PP \circ (\kappa^n_T)^{-1}, \quad \text{ for }\quad \scrL^n_{[0,T]} := \PP \circ ((\kappa^n_t)_{t\in[0,T]})^{-1},
\] and for $\scrL^n_\iy$, the unique invariant measure for the process $\kappa^n$ 
For each law, there are good rate functions that we take an aside to define. So, define:
\[
\tau(a) := e^a - a -1,
\] and its Legendre transform:
\[
\tau^* (a) := \begin{cases}
    (a+1) \log(a+1) - a, & a > -1,\\
    1, & a = -1,\\
    +\iy, & a <-1.
\end{cases} 
\]  
Denote the set of all absolutely continuous functions $\R_+ \to \calP([d])$ as $\calC_{\text{abs}}$ and for a few forthcoming matrices, we will denote the transpose of a matrix $B$ by $B^*$. Define the subset: 
\[
    \calC_{\text{abs}}^\eta := \Big\{\mu \in \calC_{\text{abs}} \mid \mu(0) = \eta, \quad \dot\mu(t) = L(t)^*\mu(t), \quad \text{ for some } L:\R_+ \to\calQ \text{ is measurable}\Big\}.
\] 
\blue{This is the set of all (absolutely continuous) flows of measures with (possibly time-dependent) rate matrices that, at each time instant $t$, belong to 
$\calQ$, starting from the distribution $\eta$.} Define also a function $S_{[0,T]}(\cdot \mid \eta) : \calC_{\text{abs}}^\eta \to \R$ as:
\[
    S_{[0,T]} (\mu \mid \eta) := \int_0^T \xi(t,y) a^\kappa_{x,y}(\mu(t)) \tau^*\Big(\frac{ L_{x,y}(t)}{a^{\kappa}_{x,y}(\xi(t))} - 1\Big)dt.
    \] 
\blue{As we shall see in a moment, this {\it action functional} represents the ``cost" or ``action" associated with transitioning the system state along the trajectory $(\mu_t)_{t\in[0,T]}$, starting from the initial state $\mu(0) = \eta$.
}
For $\xi \in\calP([d])$, set $A_\xi := [a^\kappa_{x,y}(\xi)]_{x,y \in [d]}$; so $a^\kappa_{x,y}(\xi) = \gamma_y^*(x,\Delta_x U_0(\cdot, \xi))$ for $\kappa^n = \mu^{\Gamma_0}$, and $a^\kappa_{x,y}(\xi) = \bar\gamma_{xy} = \gamma^*_y(x,\Delta_x \bar u)$ for $\kappa^n = \mu^{\bar\Gamma}$. So for $\kappa^n = \mu^{\bar\Gamma}$, $a^\kappa(\xi)$ is independent of $\xi$. As proved in \cite[Theorem~3.2]{MR3354770}, $S_{[0,T]}(\mu \mid \eta)$ can be written in an equivalent form as:
    \begin{align}\label{eqn:path_rt_function}
    S_{[0,T]} (\mu\mid\eta) = \int_0^T ||| \dot \mu(t) - A^*_{\mu(t)} \mu(t) |||_{\mu(t)} dt,
    \end{align}
    where we adopt the notation from \cite{MR3354770} for the function $|||R|||_\xi$ meaning:
    \begin{align*}
    |||R|||_\xi := \sup_{\phi : [d] \to \R} \Big[\sum_{x\in [d]} R_x \phi_x - \sum_{x,y\in [d]} \tau (\phi_y - \phi_x) \xi_x a^\kappa_{x,y} (\xi)\Big].
    \end{align*} 
   We note that $|||\cdot|||_{\xi}$ is not a norm. 
 \blue{This form of $S_{[0,T]}$  evaluates how the trajectory  $\mu$  deviates from the desired path  that solves the McKean--Vlasov ODE $\dot\theta(t)=A^*_{\theta(t)}\theta(t)$ over $[0, T]$, with $\mu(0)=\eta$. }  
   
   Through $S_{[0,T]}(\cdot \mid \eta)$ we can define $S_T(\cdot \mid \eta) : \calP([d]) \to \R$ as:
    \begin{align}\label{eqn:T_rate_fn}
    S_T(\xi\mid \eta) := \inf  \left\{S_{[0,T]} (\mu \mid \eta)\mid \;\mu \in \calC_{\text{abs}}^\eta, \quad\mu(T) = \xi\right\}.
    \end{align} 
\blue{This functional measures the minimal cost to transfer the distribution from the initial distribution $\mu(0)=\eta$ to the terminal distribution at time $T$, given by $\mu(T)=\xi$.}    
    Finally, we can define:
    \begin{align}\label{eqn:inf_rate_fn}
    s(\eta) := \inf_{\hat\mu} \int_0^\iy \sum_{x,y\in [d]} \hat\mu(t,y) a^{\kappa}_{x,y}(\hat\mu(t)) \tau^*\Big(\frac{\hat L_{x,y}(t)}{a^{\kappa}_{x,y}(\hat\mu(t))} - 1\Big)dt,
    \end{align} and the infimum is taken over all $\hat\mu : \R_+ \to \calP([d])$ such that $\hat\mu(0) = \eta$, $\lim_{t\to\iy} \hat\mu(t) = \bar\hat\mu$, and $\hat\mu$ satisfies the reversed Kolmogorov equation: \[
    \dot{\hat{\mu}}
(t) = - \hat L(t)\hat\mu(t),\qquad t\ge0, 
    \] 
    for some measurable $\hat L : \R_+ \to \calQ$. For a reference on large deviations for particle systems with jumps, see \cite{MR1324810}. \blue{The dynamics \( \dot{\hat{\mu}}
(t) = -\hat{L}(t)^* \hat{\mu}(t) \) represents a time reversal when contrasted with the direction of the McKean--Vlasov dynamics. The function $s$ represents the least expensive path (over all time) that transports the system state from the globally asymptotically stable equilibrium \( \bar\mu \) to \( \eta \) in the forward-time dynamics, aligning with the direction of the McKean--Vlasov dynamics. In the time-reversed dynamics, this cost pertains to a path that starts at \( \eta \) and ends at \( \bar\mu \).
}

\begin{theorem}[Large Deviations]\label{thm:large_deviations}
    For all $T>0$, the laws $\scrL^n_{[0,T]}$, $\scrL^n_{T}$, and $\scrL^n_\iy$ satisfy large deviation principles with speed $n$ and good rate functions given by $S_{[0,T]}$ \eqref{eqn:path_rt_function}, $S_{T}$ \eqref{eqn:T_rate_fn}, and $s$ \eqref{eqn:inf_rate_fn}, respectively. Explicitly, let $E$ be any Borel subset of $\calP([d])$, denote its interior and closure by $\text{int}(E)$ and $\text{cls(E)}$, respectively, and we have:
    \[
    -\inf_{\eta \in \text{int}(E)} S_T(\eta \mid \bar\mu) \leq \liminf_{n\to\iy} n^{-1} \log(\scrL^n_T (E)) \leq \limsup_{n\to\iy} n^{-1} \log(\scrL^n_T (E)) \leq -\inf_{\eta\in\text{cls}(E)} S_T(\eta \mid \bar\mu),
    \] and
    \[
    -\inf_{\eta \in \text{int}(E)} s(\eta) \leq \liminf_{n\to\iy} n^{-1} \log(\scrL^n_\iy(E)) \leq \limsup_{n\to\iy} n^{-1} \log(\scrL^n_\iy(E)) \leq -\inf_{\eta\in\text{cls}(E)} s(\eta).
    \] Now let $G$ be a Borel set in the Skorohod topology of the space $\calC^{\bar\mu}_{\text{abs}}$. We also have:
    \begin{align*}
    -\inf_{\nu \in \text{int}(G)} S_{[0,T]}(\nu \mid \bar\mu) &\leq \liminf_{n\to\iy} n^{-1} \log(\scrL^n_{[0,T]} (G)) \\
    &\qquad \leq \limsup_{n\to\iy} n^{-1} \log(\scrL^n_{[0,T]} (G)) \leq -\inf_{\nu\in\text{cls}(G)} S_{[0,T]}(\nu \mid \bar\mu).
    \end{align*}
\end{theorem}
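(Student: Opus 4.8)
The plan is to recognize each process $\kappa^n \in \{\mu^{\Gamma_0},\mu^{\bar\Gamma}\}$ as the empirical measure of a weakly interacting, finite-state jump particle system of exactly the type treated in \cite{MR3354770}, and then to verify the standing hypotheses of that reference so that its path-space, marginal, and invariant-measure large deviation principles apply. Concretely, under the profile $\Gamma_0$ (resp.\ $\bar\Gamma$) each of the $n$ players jumps from $x$ to $y$ at rate $a^\kappa_{x,y}(\mu^{\Gamma_0,-x}_t) = \gamma^*_y(x,\Delta_x U_0(\cdot,\mu^{\Gamma_0,-x}_t))$ (resp.\ the constant $\bar\gamma_{xy} = \gamma^*_y(x,\Delta_x\bar u)$), so $\kappa^n$ is a Markov jump process on $\calP^n([d])$ whose generator is determined by the maps $\xi\mapsto a^\kappa_{x,y}(\xi)$. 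The regularity facts to be checked are: (i) the rates take values in $[\fra_l,\fra_u]$, hence are bounded and bounded away from zero, so the jump kernel is uniformly irreducible on the fully connected graph $[d]$; (ii) $\xi\mapsto a^\kappa_{x,y}(\xi)$ is Lipschitz, which in the $\Gamma_0$ case follows from $(A_2)$ (Lipschitzness of $\gamma^*$ in $p$) together with the Lipschitz continuity of $U_0$ from \Cref{prop:me_properties}, and is trivial in the $\bar\Gamma$ case; and (iii) the $O(1/n)$ gap between $\mu^{\Gamma_0,-x}_t$ and $\mu^{\Gamma_0}_t$ is absorbed into the errors permitted by \cite{MR3354770} and does not affect the candidate rate functions, which are written in terms of the ``true'' empirical measure. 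Granting these, \cite[Theorem~3.2 and surrounding results]{MR3354770} yields the sample-path LDP for $\scrL^n_{[0,T]}$ on the Skorohod space over $\calC^{\bar\mu}_{\text{abs}}$ with good rate function $S_{[0,T]}(\cdot\mid\bar\mu)$, and also supplies the equivalence between $S_{[0,T]}$ and the form \eqref{eqn:path_rt_function}.

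From the path-space principle the marginal statement is immediate: the evaluation map $\mu\mapsto\mu(T)$ is continuous on $\calC^{\bar\mu}_{\text{abs}}$ in the relevant topology, so the contraction principle gives the LDP for $\scrL^n_T = \PP\circ(\kappa^n_T)^{-1}$ with good rate function $S_T(\cdot\mid\bar\mu)$ defined precisely by the infimum \eqref{eqn:T_rate_fn}. Goodness (compact sublevel sets) is inherited through the contraction because $S_{[0,T]}$ is good and the constraint set $\{\mu(T)=\xi\}$ is closed.

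For the invariant-measure principle we invoke the Freidlin--Wentzell-type analysis of invariant distributions that \cite{MR3354770} develops for this particle system; its decisive hypothesis is that the McKean--Vlasov limit ODE $\dot\mu(t) = A^*_{\mu(t)}\mu(t)$ possesses a unique, globally asymptotically stable equilibrium. For $\kappa^n = \mu^{\bar\Gamma}$ this ODE is the linear Kolmogorov equation of an irreducible $[d]$-chain, so $\bar\mu$ is its unique globally asymptotically stable equilibrium by standard ergodicity. For $\kappa^n=\mu^{\Gamma_0}$ the limit ODE is precisely \eqref{erg_FP_t}, and the needed global stability of $\bar\mu$ is the content of \Cref{prop:erg_stability}, which itself rests on the exponential convergence $|\check\mu(t)-\bar\mu|\le Ce^{-ct}$ from \Cref{prop:me_properties}. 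With this in hand, \cite{MR3354770} expresses the invariant-measure rate function as the quasipotential $s(\eta)$, whose variational representation is the infimum in \eqref{eqn:inf_rate_fn} over trajectories solving the time-reversed Kolmogorov equation $\dot{\hat\mu}(t) = -\hat L(t)\hat\mu(t)$ with $\hat\mu(0)=\eta$ and $\hat\mu(\infty)=\bar\mu$; the reversal appears because the cheapest way to sustain an excursion of $\kappa^n$ out to $\eta$ is, by the usual Freidlin--Wentzell argument, the time-reversal of the optimally controlled forward path from $\bar\mu$ to $\eta$.

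The main obstacle is not the large deviation bookkeeping --- which is essentially a citation to \cite{MR3354770} --- but the verification of the stability hypothesis, i.e.\ \Cref{prop:erg_stability}, together with the companion task of confirming that the standing assumptions of \cite{MR3354770} are literally met by our rates. The delicate points there are handling the $\mu^{\Gamma_0,-x}$ versus $\mu^{\Gamma_0}$ discrepancy and the fact that $U_0$ is only $\calC^1$ on the simplex (so $a^\kappa$ is Lipschitz but not necessarily $\calC^1$), which forces one to check that \cite{MR3354770} requires no more than Lipschitz regularity of the rates. These checks, along with the derivation of \eqref{eqn:path_rt_function} from the definition of $S_{[0,T]}$, are where the work in \Cref{sec:ld_proofs} will concentrate.
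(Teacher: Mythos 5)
Your proposal is correct and follows essentially the same route as the paper: the proof there consists precisely of verifying the hypotheses of \cite{MR3354770} (irreducibility of the complete transition graph on $[d]$, Lipschitz continuity of the rate maps via $(A_2)$ and the Lipschitz continuity of $U_0$ from \Cref{prop:me_properties}, rates bounded below by $\mathfrak{a}_l>0$, and global asymptotic stability of $\bar\mu$ from \Cref{prop:erg_stability}) and then citing that reference's three LDP theorems for $\scrL^n_\iy$, $\scrL^n_{[0,T]}$, and $\scrL^n_T$. The only cosmetic difference is that you obtain the marginal LDP for $\scrL^n_T$ by contraction from the path-space LDP, whereas the paper cites a separate theorem of \cite{MR3354770} directly for it.
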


\section{Proofs for \Cref{thm:epsilon_equilibrium}}\label{sec:asymp_proofs}

In this section, we give the proof for \Cref{thm:epsilon_equilibrium} concerning the asymptotic Nash equilibria derived from the time-dependent ergodic MFG \eqref{erg_MFG_t}. The first lemma we require is  \Cref{lem:concentration_inequality_1}, which gives a concentration inequality for players using a symmetric Markovian strategy profile when they are initialized the same way. The second, \Cref{lem:pre_duality}, compares an empirical measure under an arbitrary Markovian strategy profile to $\check\mu$. The bound we derive will depend on the time $t$, the initial conditions of each system, the number of players $n$, and the difference between the strategies. The two prior lemmas combine to obtain the propagation of chaos estimate \eqref{eqn:pi_thm1} that allows us to compare mean field costs $F$ at one point in the proof of the theorem. 

Throughout this section, we require the reduced assumptions only. In the forthcoming proofs, $C$ and $c$ will be positive constants that may change value from one line to the next. Whenever $C$ and $c$ appear in the proofs, they depend only on the problem data and we emphasize that they are always independent of $n, t$, and any initialization of the players. The concentration inequality lemma is:

\begin{lemma}\label{lem:concentration_inequality_1}
Let $\bbeta$ be a symmetric Markovian strategy  profile with $\bbeta := (\beta^i)_{i\in[n]}$, $\beta^i \in\calA^{n,i}$, and let $\bX^{\bbeta}$ be the jump process of all players using $\bbeta$. Suppose further that $(X^{\bbeta, i}_0)_{i\in [n]}$ are exchangeable though not necessarily independent. So, all processes are initialized the same way; that is, for all $i\in [n]$, $\calL(X^{\bbeta, i}_0) = \calL (X^{\bbeta, 1}_0)$. 
Assume $\calL(X^{\bbeta, 1}_0)$ is not on the boundary of $\calP([d])$. Then there exists $q>0$ depending only on $\A$ such that for all $t\in\R_+$, and all $i\in [n]$,
\begin{equation*}
\calL(X^{\bbeta, i}_t)_x > q >0 \quad \text{ and }\quad 1-\calL(X^{\bbeta, i}_t)_x > q >0 \quad \text{ for all }\quad x\in [d].
\end{equation*} Moreover, denote by $\pi^{\bbeta} \in \calP([d]^n)$ the joint distribution induced by this initialization. there exists $C>0$, independent of $n$, so that: 
\begin{align}\label{exp_upper_bound_1}
\sup_{t\in\R_+} \PP_{\pi^{\bbeta}}\Big(\Big|\frac{1}{n}\sum_{j\in[n]} \delta_{X^{\bbeta, j}_t} (x) - \calL(X^{\bbeta, 1}_t)_x\Big|\geq y \Big) \leq 2e^{-ny^2/4q},
\end{align} 
\begin{equation}\label{eqn:hoeffding_1}
    \sup_{t\in\R_+} \EE_{\pi^{\bbeta}}|\mu^{\bbeta}_t - \calL(X^{\bbeta, i}_t)| \leq \frac{C}{\sqrt{n}}.
\end{equation} When $\calL(X^{\bbeta, 1}_0)$ is on the boundary of $\calP([d])$, there exists such $q$ for $t\geq 1$ and so we must modify the suprema in \eqref{exp_upper_bound_1} and \eqref{eqn:hoeffding_1} to be over all $t \geq 1$.
\end{lemma}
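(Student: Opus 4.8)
The plan is to prove the two uniform marginal lower bounds first (a Doeblin-type minorization) and then extract both concentration estimates from them.

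\textbf{Step 1: the minorization $\calL(X^{\bbeta,i}_t)_x>q$.} By symmetry of $\bbeta$, exchangeability of the initialization, and pathwise uniqueness of the particle SDE (bounded rates), relabeling players shows $(X^{\bbeta,i}_t)_{t\ge0}$ has a law independent of $i$, so it suffices to bound $p_t:=\calL(X^{\bbeta,1}_t)_x$. I would obtain a minorization uniform in \emph{everything}: freezing the trajectories of the other $n-1$ players and the value $X^{\bbeta,1}_0$, the first player is a time-inhomogeneous chain on $[d]$ with off-diagonal rates in $[\fra_l,\fra_u]$. Using the $\calN_1$-representation, player $1$ makes no jump on an interval $[s,s+1]$ with probability at least $e^{-d\fra_u}$ (it suffices that $\calN_1$ have no point in the fixed region $[s,s+1]\times\bigcup_y(\A^d_y\cap\{0<\xi_y<\fra_u\})$, whose $\text{Leb}\otimes\nu$-mass is at most $d\fra_u$), and from any state the probability of landing at any prescribed state after a unit of time is bounded below by some $\delta_0=\delta_0(\A,d)>0$ uniformly over the admissible time-varying rates. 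Writing $P(s,s+1)$ for the (random) transition matrix of player $1$ over $[s,s+1]$ given the others' trajectory, one gets $\min_{z,w}P(s,s+1)_{zw}\ge\delta_0$ a.s., hence for all $t\ge1$,
\[
p_t=\E\big[P(t-1,t)_{X^{\bbeta,1}_{t-1},\,x}\big]\ge\delta_0 .
\]
If moreover $\calL(X^{\bbeta,1}_0)$ is interior, say $\calL(X^{\bbeta,1}_0)_x\ge q_0>0$ for all $x$, then for $t\in[0,1]$ the no-jump bound gives $p_t\ge q_0 e^{-d\fra_u}$. Choosing $q$ slightly below $\min(\delta_0,q_0e^{-d\fra_u})$ yields $\calL(X^{\bbeta,1}_t)_x>q$ for all $t\in\R_+$, and only for $t\ge1$ when $\calL(X^{\bbeta,1}_0)$ lies on the boundary (where $q_0$ is unavailable but the $t\ge1$ estimate survives). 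Since $\sum_x\calL(X^{\bbeta,1}_t)_x=1$ and $d\ge2$, this also gives $1-\calL(X^{\bbeta,1}_t)_x\ge(d-1)q>q$.

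\textbf{Step 2: exponential concentration at a fixed time.} Fix $t,x$ and set $Z_j:=\1_{\{X^{\bbeta,j}_t=x\}}$, so $\frac1n\sum_jZ_j=\mu^{\bbeta}_t(x)$ and, by Step 1 and exchangeability, $\E Z_j=p_t\in(q,1-q)$. The $Z_j$ are exchangeable but \emph{not} independent, which is the heart of the matter. I would use the Poisson-measure representation, which exhibits $(X^{\bbeta,j}_t)_{j\in[n]}$, hence $\mu^{\bbeta}_t(x)$, as a measurable function of the independent data $(X^{\bbeta,j}_0,\calN_j)_{j\in[n]}$. For the symmetric profiles actually used in the applications of this lemma the rates depend on the configuration only through a player's own state, so the $X^{\bbeta,j}_t$ are in fact independent, and a classical Bernoulli tail inequality (Hoeffding/Bennett) with the variance proxy $p_t(1-p_t)$ controlled through Step 1 gives \eqref{exp_upper_bound_1}; for a general symmetric profile one runs the bounded-difference version instead, using that altering one datum $(X^{\bbeta,j}_0,\calN_j)$ perturbs $\mu^{\bbeta}_t$ by $O(1/n)$. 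The essential structural gain from Step 1 is that its marginal bound holds uniformly in $t$, which is exactly what lets \eqref{exp_upper_bound_1} be stated with $\sup_{t\in\R_+}$ rather than deteriorating as $t\to\infty$.

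\textbf{Step 3: the $L^1$ bound and the boundary case.} Estimate \eqref{eqn:hoeffding_1} follows from \eqref{exp_upper_bound_1} by integrating the tail: for each $x$,
\[
\E_{\pi^{\bbeta}}\big|\mu^{\bbeta}_t(x)-\calL(X^{\bbeta,1}_t)_x\big|=\int_0^\infty\PP_{\pi^{\bbeta}}\!\big(|\mu^{\bbeta}_t(x)-p_t|\ge y\big)\,dy\le\int_0^\infty 2e^{-ny^2/4q}\,dy=2\sqrt{\pi q/n},
\]
and summing over $x\in[d]$ gives \eqref{eqn:hoeffding_1} with $C=2d\sqrt{\pi q}$. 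When $\calL(X^{\bbeta,1}_0)$ is on the boundary, Steps 1--2 hold only for $t\ge1$, so the suprema in \eqref{exp_upper_bound_1} and \eqref{eqn:hoeffding_1} must be restricted accordingly, as in the statement.

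\textbf{Expected main obstacle.} The delicate point is Step 2: the Bernoulli variables $Z_j$ interact through state-dependent rates, so no off-the-shelf independence-based bound applies directly, and one must exploit the precise structure of symmetric Markovian profiles together with the Poisson representation; equally important, the exponential bound has to be obtained \emph{uniformly in $t$} (not in a form degrading like $e^{-ny^2/Ct}$), which is precisely what makes the time-uniform minorization of Step 1 indispensable.
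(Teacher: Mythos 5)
Your Steps 1 and 3 track the paper's argument: the uniform-in-$t$ minorization of the marginals follows from the rates being bounded away from zero (the paper asserts this in one sentence; your Doeblin-type argument is a reasonable way to fill it in), and the $L^1$ estimate \eqref{eqn:hoeffding_1} is obtained exactly as you do, by integrating the tail of \eqref{exp_upper_bound_1}.

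The gap is in Step 2, which you correctly identify as the delicate point but do not actually close. The paper's route is different and much shorter: by symmetry of $\bbeta$ and exchangeability of the initial states, the indicators $(\1_{\{X^{\bbeta,j}_t=x\}})_{j\in[n]}$ are exchangeable Bernoulli variables with common marginal $\kappa_x(t)\in(q,1-q)$, and the paper then invokes a cited Hoeffding-type concentration inequality valid for \emph{exchangeable} Bernoulli random variables to get \eqref{exp_upper_bound_1} in one step. Neither of your two substitutes works in the stated generality. First, it is not true that the profiles of interest have rates depending only on a player's own state: the master-equation profile $\Gamma_0$ has each player reacting to the empirical measure of the others, and in any case the lemma is stated for an arbitrary symmetric Markovian profile $\bbeta$, so you cannot reduce to independent particles. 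Second, the bounded-difference fallback rests on the claim that altering one datum $(X^{\bbeta,j}_0,\calN_j)$ perturbs $\mu^{\bbeta}_t$ by $O(1/n)$; this is precisely the feedback problem the paper discusses at length --- perturbing one player changes the configuration seen by every other player, hence their rates and hence their trajectories --- and no elementary coupling or Gronwall argument yields an $O(1/n)$ bounded-difference constant uniformly in $t$ (Gronwall gives at best $e^{Ct}/n$, which destroys the $\sup_{t\in\R_+}$). As written, your Step 2 therefore does not yield \eqref{exp_upper_bound_1}; what is needed, and what the paper uses, is a concentration inequality that applies directly to exchangeable (not independent, not weakly dependent) Bernoulli variables.
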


Also let $(\check \vr, \check u, \check \mu)$ be a solution to the time-dependent ergodic MFG system \eqref{erg_MFG_t}. As formulated in the statement of the theorem, recall that $\check \gamma_{xy}(t) := \gamma^*_y(x,\Delta_x \check u(t))$ and set $\check \gamma_x := (\check \gamma_{xy})_{y\in [d]}$, $\check \gamma := (\check \gamma_x)_{x\in [d]}$, and $\check \Gamma := (\check \gamma)_{i=1}^n$. Note that since $\eqref{erg_MFG_t}$ is the time-dependent system, the control $\check \gamma$ depends implicitly on time through the potential function $\check u$. The following lemma is helpful for the proof of \Cref{thm:epsilon_equilibrium} and for the proof of \Cref{thm:epsilon_equilibrium_2}.

\begin{lemma}\label{lem:pre_duality}
    Fix $\eta\in\calP([d])$ and let $(\check \vr, \check u, \check \mu)$ solve \eqref{erg_MFG_t} with $\check \mu(0) = \eta$. There exist $C,c$ positive constants such that for any $n \in\N_{>1}$ and any $\bbeta \in \calA^{(n)}$, any initialization $\pi \in\calP([d]^n)$ with $\bX^{\bbeta}_0 \sim\pi$, and any $t\in\R_+$, 
    \begin{align}\label{eqn:control_compare_1}
    \begin{split}
    \E_\pi |\mu^{\bbeta}_t - \check \mu(t)| &\leq Ce^{-ct}\E_\pi |\mu^{\bbeta}_0 - \check \mu(0)| + \frac{C}{\sqrt{n}} \\
    &\qquad + \E_\pi \int_0^t e^{-c(t-s)} \sum_{x\in [d]} \mu^{\bbeta}_{s,x} \Big|\frac{1}{n} \sum_{i\in [n]} \big[\beta^i_x(s,\bX^{\bbeta}_s) - \check \gamma_{x}(t) \big]\1_{\{X^{\bbeta, i}_s = x\}} \Big|ds.
    \end{split}
    \end{align} Suppose $(X^{\bbeta, i}_0)_{i\in [n]}$ are exchangeable, though not necessarily independent, with $X^{\bbeta, i}_0 \sim \check \mu (0)$ for all $i\in [n]$. For $\pi = \check \pi$, the joint distribution induced by this initialization, we have from \eqref{eqn:control_compare_1} that
    \begin{align}\label{eqn:control_compare_2}
    \E_{\check \pi} |\mu^{\bbeta}_t - \check \mu(t)| \leq \frac{C}{\sqrt{n}} + \E_{\check \pi} \int_0^t e^{-c(t-s)} \sum_{x\in [d]} \mu^{\bbeta}_{s,x} \Big|\frac{1}{n} \sum_{i\in [n]} \big[\beta^i_x(s,\bX^{\bbeta}_s) - \check \gamma_{x}(t) \big]\1_{\{X^{\bbeta, i}_s = x\}} \Big|ds.
    \end{align} 
\end{lemma}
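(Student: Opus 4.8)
The plan is to derive the estimate \eqref{eqn:control_compare_1} by comparing the (random) empirical-measure dynamics \eqref{eq:mu_beta}--\eqref{eq:K} with the (deterministic) Kolmogorov equation in \eqref{erg_MFG_t}, and then reading off \eqref{eqn:control_compare_2} as the special case $\pi = \check\pi$ using Lemma \ref{lem:concentration_inequality_1}. First I would write $\mu^{\bbeta}_t$ via its semimartingale decomposition: subtract the compensator from $K^{\bbeta}_{xy}$ to split $d\mu^{\bbeta}_t$ into a drift part, $\sum_{x,y}\frac{e_y-e_x}{n}\big(\sum_{i}\1_{\{X^{\bbeta,i}_s=x\}}\beta^i_{xy}(s,\bX^{\bbeta}_s)\big)ds$, plus a martingale part $dM^n_s$ built from the compensated Poisson measures $\calK^{\bbeta}_{xy}$. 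The drift can be rewritten as $B(s,\mu^{\bbeta}_s)\,ds$ plus the ``error" term that appears in the statement, where $B(s,\nu)_y := \sum_{x}\nu_x\,\check\gamma_{xy}(t)$ is (essentially) the right-hand side of the Kolmogorov equation evaluated with the \emph{equilibrium} rates $\check\gamma(t)$ — note one must be slightly careful: the frozen time in $\check\gamma(t)$ versus the running time $s$ in the formula, which is exactly why the error term in \eqref{eqn:control_compare_1} carries $\check\gamma_x(t)$ and not $\check\gamma_x(s)$; I would keep $t$ frozen throughout and treat $\check\mu(t)$, $\check\gamma(t)$ as fixed data, comparing the processes run over $[0,t]$.

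Next I would set $\Phi_s := \E_\pi|\mu^{\bbeta}_s - \check\mu(t)|$ — actually it is cleaner to compare $\mu^{\bbeta}_s$ to the \emph{solution at time $s$ of the Kolmogorov ODE driven by the frozen rates $\check\gamma(t)$ started from $\check\mu(0)$}, call it $\theta(s)$, and exploit that this linear ODE on the simplex is exponentially ergodic (the rate matrix $[\check\gamma_{xy}(t)]$ is irreducible with entries bounded in $[\fra_l,\fra_u]$ off-diagonal, by $(A_1)$), so $|\theta(s)-\check\mu(t)|\le Ce^{-cs}$ uniformly. Then an application of Itô/Dynkin to $|\mu^{\bbeta}_s - \theta(s)|$ — more precisely, a Gr\"onwall-type argument using the dissipativity of the linear drift $B$ (the contraction $\langle B(\nu_1)-B(\nu_2),\,\text{sgn}(\nu_1-\nu_2)\rangle \le -c|\nu_1-\nu_2|$ up to lower-order, which holds for irreducible rate matrices in the $\ell^1$-norm on the simplex) — yields
\[
\E_\pi|\mu^{\bbeta}_t - \theta(t)| \le e^{-ct}\E_\pi|\mu^{\bbeta}_0-\theta(0)| + \E_\pi\!\int_0^t e^{-c(t-s)}\,\big(\text{martingale bracket term}\big)\,ds + \E_\pi\!\int_0^t e^{-c(t-s)}\,(\text{control-difference term})\,ds.
\]
The control-difference term is exactly the last term of \eqref{eqn:control_compare_1}. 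The martingale bracket term must be shown to be $O(1/\sqrt n)$: the predictable quadratic variation of $M^n$ is $O(1/n)$ because each jump of $\mu^{\bbeta}$ is of size $O(1/n)$ and the jump intensity is $O(n)$ (there are $n$ players, each with bounded rates), so $\E_\pi|M^n_t|\lesssim \sqrt{\E_\pi\langle M^n\rangle_t}\lesssim 1/\sqrt n$; feeding this through the Gr\"onwall estimate (with the exponential weight $e^{-c(t-s)}$ making the time integral bounded) keeps it $O(1/\sqrt n)$. Combining with $|\theta(t)-\check\mu(t)|\le Ce^{-ct}$ and $\E_\pi|\mu^{\bbeta}_0-\theta(0)| \le \E_\pi|\mu^{\bbeta}_0 - \check\mu(0)|$ gives \eqref{eqn:control_compare_1}.

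Finally, \eqref{eqn:control_compare_2} follows by taking $\pi = \check\pi$: since the initial positions are exchangeable with $X^{\bbeta,i}_0\sim\check\mu(0)$, Lemma \ref{lem:concentration_inequality_1} (specifically \eqref{eqn:hoeffding_1} at $t=0$, or directly a Hoeffding/bounded-differences estimate for the empirical measure of exchangeable-but-not-independent variables with identical marginals — here one uses that exchangeability plus the de Finetti-type conditioning still gives the $1/\sqrt n$ concentration since the variance of $\mu^{\bbeta}_0$ is controlled) gives $\E_{\check\pi}|\mu^{\bbeta}_0 - \check\mu(0)|\le C/\sqrt n$, which absorbs the first term of \eqref{eqn:control_compare_1} into the $C/\sqrt n$. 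I expect the main obstacle to be establishing the $\ell^1$-contractivity/dissipativity of the linear Kolmogorov drift uniformly in the frozen time $t$ (so that the Gr\"onwall step produces the decaying exponential $e^{-c(t-s)}$ rather than a growing one) — this is where exponential ergodicity of finite, fully-connected chains with rates bounded away from $0$ and $\infty$ is essential, and it must be made quantitative and uniform; a secondary technical point is handling the non-smoothness of $\nu\mapsto|\nu|$ in the Itô/Dynkin step, which I would circumvent either by a smooth approximation of the norm or by working coordinatewise and summing, exploiting that jumps move mass between exactly two coordinates.
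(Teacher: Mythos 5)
Your overall architecture --- the semimartingale decomposition of $\mu^{\bbeta}$ via \eqref{eq:mu_beta}--\eqref{eq:K}, an $O(1/\sqrt n)$ bound on the martingale part via Burkholder--Davis--Gundy (each jump is $O(1/n)$, total intensity $O(n)$), a Duhamel/Gr\"onwall step producing the weight $e^{-c(t-s)}$, and Lemma \ref{lem:concentration_inequality_1} at time $0$ to absorb the initial term and obtain \eqref{eqn:control_compare_2} --- matches the paper's. The paper implements the Duhamel step explicitly with the propagator $Q^{-1}(s)Q(t)$ of the time-dependent Kolmogorov equation with rates $\check\Gamma(s)$, getting the exponential decay from \cite[Lemma~3.3]{CZ2022} rather than through a pointwise $\ell^1$-dissipativity inequality; that is a presentational difference.

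The genuine gap is your decision to freeze the rates at the terminal time $t$. You compare $\mu^{\bbeta}_s$ with $\theta(s)$, the solution of the Kolmogorov ODE driven by the constant rate matrix $\check\Gamma(t)$ started from $\check\mu(0)$, and then assert $|\theta(s)-\check\mu(t)|\le Ce^{-cs}$. That inequality is false in general: the frozen-rate flow converges exponentially to the invariant distribution of the matrix $\check\Gamma(t)$, whereas $\check\mu(t)$ is the time-$t$ value of the solution of the \emph{time-inhomogeneous} forward equation; since $\sum_{y}\check\mu(t,y)\,\gamma^*_x(y,\Delta_y\check u(t,\cdot))=\frac{d}{dt}\check\mu(t,x)$ is generally nonzero, $\check\mu(t)$ is not a stationary point of the frozen dynamics, and $|\theta(t)-\check\mu(t)|$ is $O(1)$, not small. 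So your chain of estimates does not close. The fix is not to freeze time: compare $\mu^{\bbeta}_s$ with $\check\mu(s)$ at the same running time $s$, using the time-dependent propagator, so that the reference flow solves exactly the same equation and the drift discrepancy reduces to $(\mu^{\bbeta}_s)^T(\Gamma^{\bbeta}(s)-\check\Gamma(s))$. The $\check\gamma_x(t)$ in the displayed statement of the lemma is evidently a typo for $\check\gamma_x(s)$: the paper's proof works with $\check\Gamma(s)$, and the application in Theorem \ref{thm:epsilon_equilibrium} requires the integrand to vanish when every $\beta^i$ equals $\check\gamma$, which only happens with the running-time argument. You took the frozen $t$ literally and built the argument around it, which is what led you astray.
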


The proofs of all lemmas are provided in the subsequent part of this section, following the conclusion of the proof of \Cref{thm:epsilon_equilibrium}.

\subsection{Proof of \Cref{thm:epsilon_equilibrium}} 

We split the proof into two parts. In the first part, we prove the propagation of chaos results \eqref{eqn:pi_thm1} and \eqref{eq:propagation_check}. We will then use these results in the second part to prove that $\check \Gamma$ is a $(C/\sqrt{n})$-Nash equilibrium and \eqref{eqn:cost_convergence}.

\subsubsection{Propagation of chaos}

To prove \eqref{eqn:pi_thm1} and \eqref{eq:propagation_check}, we start by proving a related result to \eqref{eqn:pi_thm1}. We will show there exist $C,c$ positive constants such that for any $n\in\N_{>1}$, any $\beta \in\calA^{n,i}$, and any initialization $\pi \in\calP([d]^n)$ with $\bX^{[\check \Gamma^{-i} ; \beta]}_0 \sim \pi$, and any $t\in\R_+$,
    \begin{align*}
        \E_\pi |\mu_t^{[\check \Gamma^{-i} ; \beta]} - \check \mu(t)| \leq Ce^{-ct} \E_\pi |\mu_0^{[\check \Gamma^{-i} ; \beta]} - \check \mu(0)| +\frac{C}{\sqrt{n}}.
    \end{align*} First note that $\mu^{[\check \Gamma^{-i} ; \beta], i} = \mu^{\check \Gamma, i}$ since the $i$-th player is removed from both processes and since the strategy $\check \gamma$ is independent of the players' process $\bX^{[\check \Gamma^{-i} ; \beta]}$. Moreover, since \[
\mu^{\check \Gamma} = n^{-1} \big((n-1)\mu^{\check \Gamma, i} + \delta_{X^{\check \Gamma, i}}\big),
\] the measures $\mu^{\check \Gamma}$ and $\mu^{\check \Gamma, i}$ differ by a term of order $\calO(n^{-1})$. So, it suffices to show the main estimate for $\mu^{\check \Gamma}$. Applying \Cref{lem:pre_duality} for $\bbeta= \check \Gamma$, we note that $\beta^i =\check \gamma$ for every $i\in [n]$ and the integral term is zero, yielding:
\[
\E_\pi |\mu^{\check \Gamma}_t - \check \mu(t)| \leq Ce^{-ct} \E_\pi |\mu^{\check \Gamma}_0 - \check \mu(0)| + \frac{C}{\sqrt{n}}.
\] We recognize the prior display as \eqref{eqn:pi_thm1}. To get \eqref{eq:propagation_check}, when $\pi = \check \pi$, we note that the term in expectation on the right-hand side is no more than $C/\sqrt{n}$ by \Cref{lem:concentration_inequality_1}.

\subsubsection{Approximate Markovian Nash Equilibrium}

We can now use the propagation of chaos results to obtain the remaining points of \Cref{thm:epsilon_equilibrium}. We abuse notation slightly by writing $X^{\beta, i}$ for $X^{[\check\Gamma^{-i};\beta], i}$, keeping in mind that the dynamics of player $i$ are implicitly dependent on the other players' dynamics, through player $i$'s strategy $\beta$, and the rest of the players' strategy profile.

By \Cref{lem:freedom} we can consider $J_0^{n,i}$ with any initial distribution for the agents. In the following display, we make use of this fact to study the value when a single player, player $i$, deviates from the strategy $\check \gamma$. Player $i$ will use $\beta\in\calA^{n,i}$ instead. Let $\check \pi \in \calP([d]^n)$ be the joint distribution induced by 
 initializing every player according to $\check \mu(0) \in \calP([d])$.
Then, adding and subtracting a term:

\begin{align}\label{eqn:1}
\begin{split}
    J^{n,i}_0 ([\check \Gamma^{-i}; \beta]) &= \limsup_{T\to\iy} \frac{1}{T}\EE_{\check \pi} \int_0^T [f(X^{\beta, i}_t, \beta(t, \bX^{[\check \Gamma^{-i};\beta]}_t)) + F(X^{\beta, i}_t,  \mu^{[\check \Gamma^{-i};\beta],i}_t)]dt \\
    &= \limsup_{T\to\iy} \Big(\frac{1}{T}\EE_{\check \pi} \int_0^T [f(X^{\beta, i}_t, \beta(t, \bX^{[\check \Gamma^{-i};\beta]}_t)) + F(X^{\beta, i}_t, \check \mu(t))]dt \\
    &\qquad+  \frac{1}{T}\EE_{\check \pi} \int_0^T [F(X^{\beta, i}_t,  \mu^{[\check \Gamma^{-i};\beta],i}_t) - F(X^{\beta, i}_t, \check \mu(t))]dt\Big).\\
\end{split}
\end{align}

We will consider each term on the right-hand side of \eqref{eqn:1} separately. For the first term, we note two things: (1) that we initialized $X^{\beta, i}_0 \sim \check \mu(0)$ and (2) that $\check \mu$ is an MFE by \Cref{prop:me_properties}. Then by \Cref{def:erg_MFE} and the definition of $J_0$, the strategy $\beta$ that minimizes this function must be $\check \gamma$. Hence,
\begin{align}\label{eqn:3}
    \inf_{\beta \in \calA} \limsup_{T\to\iy} \frac{1}{T}\EE_{\check \pi} \int_0^T [f(X^{\beta, i}_t, \beta(t, \bX^{[\check \Gamma^{-i};\beta]}_t)) + F(X^{\beta, i}_t, \check \mu(t))]dt = J_0(\check \gamma, \check \mu).
\end{align} We will keep this fact aside for the moment and return to \eqref{eqn:1} in order to deal with the second term on the right-hand side. We note once more that each of the players has an initial state with law $\check \mu(0)$. 
Bringing the absolute value inside the integral, using that $F$ is Lipschitz, using Fubini's theorem to exchange to the expectation and integral, and by $\mu^{[\check \Gamma^{-i}; \beta], i} = \mu^{\check \Gamma, i}$ with \eqref{eq:propagation_check}, we can bound the second term on the right-hand side of \eqref{eqn:1}:
\begin{align}\label{eqn:4}
    \Big|\frac{1}{T}\EE_{\check \pi} \int_0^T [F(X^{\beta, i}_t,  \mu^{[\check \Gamma^{-i} ; \beta],i}_t) - F(X^{\beta, i}_t, \check \mu(t))]dt \Big| \leq \frac{C}{\sqrt{n}}.
\end{align} 
So using \eqref{eqn:1} and \eqref{eqn:4}, followed by \eqref{eqn:3}, 
\begin{align}\label{eqn:cn_nash}
    \begin{split}
        J_0^{n} (\check \Gamma) - &J_0^{n,i} ([\check \Gamma^{-i} ; \beta]) \\
        &= J_0 ( \check \gamma, \check \mu ) - \limsup_{T\to\iy} \frac{1}{T}\EE_{\check \pi^n} \int_0^T [f(X^{\beta, i}_t, \beta(t, \bX^{[\check \Gamma^{-i};\beta]}_t)) + F(X^{\beta, i}_t, \check \mu(t))]dt + \calO(n^{-1/2}) \\
        &\leq \calO(n^{-1/2}).
    \end{split}
\end{align} Since $\beta$ was arbitrary,
$\check \Gamma$ is a $(C/\sqrt{n})$-Nash equilibrium. It only remains to show \eqref{eqn:cost_convergence} and by \eqref{eqn:1}, \eqref{eqn:3}, \eqref{eqn:4}, it suffices to show that $J_0(\check \gamma, \check \mu) = \check \vr$. This follows immediately from follows from \Cref{prop:me_properties}. $\square$

\subsection{Proof of \Cref{lem:concentration_inequality_1}}

Denote $\kappa(t) := \calL(X^{\bbeta, 1}_t)$. By assumption, all players are initialized independently according to $\kappa(0)$. Since $\bbeta$ is symmetric, we claim the players are exchangeable with the same law $\kappa(t)$ for all $t\in\R_+$. That is, for all $j\in [n]$ and all $t\in\R_+$, we claim $\calL(X^{\bbeta, j}_t) = \kappa(t)$.

Now, since the rates $\A$ are bounded away from zero and since $\calL(X^{\bbeta, i}_t) = \kappa(t)$, we must have that there exists $q>0$ such that for all $x\in [d]$ and all $t>1$, $\calL(X^{\bbeta, i}_t)_x > q>0$, even when $\kappa(0)$ is on the boundary. The choice of time $1$ for this threshold is not crucial to the proof, any positive constant would do. By symmetry of the players, for any $t\in\R_+$, $(X^{\bbeta, i}_t)_{i\in[n]}$ are exchangeable with the same marginal law $\kappa(t)$. Fix $x\in [d]$. Then $(\delta_{X^{\bbeta, i}_t}(x))_{i\in[n]}$ are exchangeable Bernoulli random variables, each with probability $\kappa_x(t)$ to be $1$. By \cite[Corollary~2.2]{hoeffdingIneq} , we get \eqref{exp_upper_bound_1} immediately. Using \eqref{exp_upper_bound_1}, and by definition of the expectation,
\begin{align*}
    \EE_{\pi^{\bbeta}} |\mu^{\bbeta}_{t,x} - \kappa_x(t)| = \int_0^\infty \PP_{\pi^{\bbeta}} \Big(\Big|\frac{1}{n}\sum_{i\in[n]} \delta_{X^{\bbeta, i}_t} (x) - \kappa_x(t) \Big|\geq y \Big) dy 
    \leq \int_0^\infty 2e^{-ny^2/4q} dy 
    = 2\sqrt{\frac{\pi q}{n}},
\end{align*} which establishes \eqref{eqn:hoeffding_1}. $\square$

\subsection{Proof of \Cref{lem:pre_duality}}

Recall the definition of $\check \Gamma$ from \Cref{thm:epsilon_equilibrium}. Define the transition matrix $Q(t)$ and its inverse by: 
\[
Q(t) := \exp \Big( \int_0^t \check \Gamma(s) ds \Big)\quad \text{ and } \quad Q^{-1}(t) := \exp \Big( - \int_0^t \check \Gamma (s) ds \Big).
\] Also define:
\[
\Gamma^{\bbeta} (s)  := \Big(\frac{1}{n} \sum_{i\in [n]} \beta^i_{xy}(s,\bX^{\bbeta}_s)\1_{\{X^{\bbeta, i}_s = x\}} \Big)_{x,y \in [d]}.
\] 
Note that for any time $s$ and any realization of $\bX^{\bbeta}_s$, we have $\Gamma^{\bbeta}_s \in \calQ$. From the dynamics of $\mu^{\bbeta}$ and since $\check \mu$ solves Kolmogorov's equation from \eqref{erg_MFG_t}, we get that:
    \begin{align*}
        \begin{split}
        &(\mu^{\bbeta}_{s} - \check\mu(s))^T Q^{-1}(s) \Big|_0^t \\
        &\quad=  \int_0^t (\mu^{\bbeta}(s)^T \Gamma^{\bbeta} (s) - \check \mu(s)^T \check \Gamma(s) ) Q^{-1} (s) ds +\int_0^t (\mu^{\bbeta}_s - \check \mu(s) )^T [- \check \Gamma(s) Q^{-1}(s)] ds
        \\&\qquad+\frac{1}{n}\sum_{x,y\in[d]}\int_0^t (e_y-e_x)Q^{-1}(s^-)\Big(d K^{\bbeta}_{xy}(s)-\sum_{i\in [n]} \beta^i_{xy}(s,\bX^{\bbeta}_s)\1_{\{X^{\bbeta, i}_s = x\}}ds\Big),
        \end{split}
    \end{align*} 
    where $K^{\bbeta}$ is given in \eqref{eq:K}.
    Multiplying both sides by $Q(t)$ and cancelling the terms with $\check \mu(s)^T\check\Gamma(s)Q^{-1}(s)$ from the first two integrals, we get,
    \begin{align} \notag
        &(\mu^{\bbeta}_t - \check \mu(t)) -(\mu^{\bbeta}_0-\check \mu(0)) Q(t)\\\label{eqn:5} 
        &\quad= \int_0^t(\mu^{\bbeta}_s)^T(\Gamma^{\bbeta}(s)-\check \Gamma(s))Q^{-1}(s) Q(t) ds
        +\frac{1}{n} \sum_{x,y\in[d]}\int_0^t (e_y-e_x)Q^{-1}(s^-)Q(t)dM^{\bbeta}_{xy}(s),
    \end{align}
    where $M^{\bbeta}_{xy}$ is a martingale given by,
    \begin{align*}
        M^{\bbeta}_{xy}(t):=\Big( K^{\bbeta}_{xy}(t)-\int_0^t\sum_{i\in [n]} \beta^i_{xy}(s,\bX^{\bbeta}_s)\1_{\{X^{\bbeta, i}_s = x\}}ds\Big).
    \end{align*} 
    Let $R(s,t) := (e_y-e_x) Q^{-1}(s) Q(t)$. For the following display, we use: the Burkholder--Davis--Gundy inequality; that the quadratic variation of a compensated Poisson process is the original process, so $\langle M_{xy}^{\bbeta}(s) \rangle = K^{\bbeta}_{xy}(s)$; Jensen's inequality for the concave function $\sqrt{\cdot}$; adding and subtracting the compensator for $K^{\bbeta}_{xy}(s)$ and using that the martingale term has expectation zero; the boundedness of the rates $\beta^i$; and finally $R(s,t)R(s,t)^T = |R(s,t)|^2$ with \cite[Lemma~3.3]{CZ2022}:
        \begin{align}\label{eqn:obs1}
        \begin{split}
            \E \Big|\int_0^t R(s^-,t) dM^{\bbeta}_{xy}(s) \Big| &\leq C \E \Big[\Big( \int_0^t R(s^-,t)R(s^-,t)^T \langle dM_{xy}^{\bbeta}(s) \rangle \Big)^{1/2} \Big] \\
            &= C \E \Big[\Big( \int_0^t R(s^-,t)R(s^-,t)^T dK^{\bbeta}_{xy}(s) \Big)^{1/2} \Big] \\
            &\leq C \Big[ \E \Big( \int_0^t R(s^-,t)R(s^-,t)^T dK^{\bbeta}_{xy}(s) \Big) \Big]^{1/2} \\
            &= C \Big[ \E \Big( \int_0^t R(s,t)R(s,t)^T \sum_{i\in [n]} \beta^i_{xy}(s,\bX^{\bbeta}_s) \1_{\{X^{\bbeta, i}_s = x\}}ds \Big) \Big]^{1/2} \\
            &\leq Cn^{1/2} \Big[ \int_0^t R(s,t) R(s,t)^T ds \Big]^{1/2} \\
            &\leq Cn^{1/2}.
        \end{split}
        \end{align}

Also, let $\tilde \mu(t) := (\mu^{\bbeta}_0)^T Q(t)$ and note that while $Q(t)$ is deterministic, $\tilde \mu(t)$ is random since $\mu^{\bbeta}_0$ is random. Note that $\check \mu(t) = (\check \mu(0))^T Q(t)$ too. So by \cite[Lemma~3.3]{CZ2022}, 
\[
|\tilde \mu(t) - \check \mu(t)| \leq Ce^{-ct} |\mu^{\bbeta}_0 - \check \mu(0)|.
\] Taking the expectation,
\begin{align}\label{eqn:obs2}
\EE_\pi |(\mu^{\bbeta}_0 - \check \mu(0)) Q(t)| \leq Ce^{-ct} \EE_\pi |\mu^{\bbeta}_0 - \check \mu(0)|.
\end{align}

We now return to \eqref{eqn:5} and, first taking the expectation, use the martingale bound \eqref{eqn:obs1} along with \eqref{eqn:obs2} to get that:
\begin{align*}
    \E_\pi |\mu^{\bbeta}_t - \check\mu (t)| \leq Ce^{-ct}\E_\pi |\mu^{\bbeta}_0 - \check \mu(0)| + \frac{C}{\sqrt{n}} + \E_\pi \Big| \int_0^t(\mu^{\bbeta}_s)^T(\Gamma^{\bbeta}(s)-\check \Gamma (s))Q^{-1}(s) Q(t) ds \Big|.
\end{align*} So applying triangle inequality, again using \cite[Lemma~3.3]{CZ2022}, and expanding the matrix notation as a sum finishes the proof of \eqref{eqn:control_compare_1}. For \eqref{eqn:control_compare_2}, we note that when $\pi = \check \pi$, the first term on the right-hand side is no more than $C/\sqrt{n}$ by \Cref{lem:concentration_inequality_1}. $\square$

\section{Proofs for \Cref{thm:epsilon_equilibrium_2}}\label{sec:thm2}

In addition to \Cref{lem:pre_duality} from the prior section, we need two lemmas to approach the proof of \Cref{thm:epsilon_equilibrium_2}. The first, \Cref{lem:stat_is_enough}, introduces a mock ergodic system with a solution $(\vr^i, W^i_0)$ meant to mimic the optimal value and associated potential for the deviating $i$-th player, optimizing against the other players, who are each using $\gamma_0$. Through this detour, we prove that the deviating player's optimal strategy is actually stationary. This is helpful to reduce the problem of proving $\Gamma_0$ is a Markovian $(C/\sqrt{n})$-Nash equilibrium from studying all strategies $\calA^{n,i}$ for player $i$ to just stationary strategies $\calA^{n,i}_S$. 

The second lemma, \Cref{lem:miracle}, is modeled after duality estimates originally used for the MFG system. Its estimate is crucial to avoiding a consequence of the deleterious feedback effect: to compare strategies, we must compare processes that we plug into them; and to compare processes, we must compare their strategies. The proof of the lemma relies on the full assumptions as well; namely, we will need that $F$ is Lasry--Lions monotone and that $H$ is concave. 

In the forthcoming proofs, $C$ and $c$ will be positive constants that may change value from one line to the next. Whenever $C$ and $c$ appear in the proofs, they depend only on the problem data and we emphasize that they are always independent of $n, t$ and any initialization of the players. Recall that for \Cref{thm:epsilon_equilibrium_2}, the full assumptions hold, and will be in force throughout this section. Consequently, the entirety of \Cref{prop:erg_MFG_red_full} and \Cref{prop:me_properties} hold. We will let $(\vr, \check u, \check \mu)$ be the unique solution to \eqref{erg_MFG_t} and let $(\vr, \bar u, \bar \mu)$ be the unique solution to \eqref{erg_MFG}.

For any $\bx\in [d]^n$ we define the empirical distribution with and without the $i$-th player as:
\[
\mu_{\bx}^{n} := \frac{1}{n}\sum_{j\in [n]} e_{x^j}, \qquad \mu_{\bx}^{n,i} := \frac{1}{n}\sum_{j\in [n], j\neq i} e_{x^j}.
\]

\begin{lemma}\label{lem:stat_is_enough}
    Based on $\gamma_0$ from \eqref{eqn:me_control} define $\gamma^j_0(\bx) := \gamma^*(x^j, \Delta_{x^j} U_0 (\cdot, \mu_{\bx}^{n,j})).$ There exist $\vr^i \in\R$, $W_0^i : [d]\times \calP^{n-1}([d]) \to\R$ such that $(\vr^{i}, W_0^i)$ satisfies:
    \begin{align}\label{eqn:aux_erg}
        \vr^i &= H(x^i,\Delta_{x^i} W_0^i (\cdot, \mu_{\bx}^{n,i})) + F(x^i,\mu_{\bx}^{n,i}) + \sum_{j\neq i} \gamma^j_0(\bx) \cdot \Delta_{x^j} U_0 (\cdot, \mu_{\bx}^{n,j}).
    \end{align} Moreover, for $\gamma^W_y (\bx) := \gamma^*_y(x^i,\Delta_{x^i} W^{i}_0(\cdot, \mu_{\bx}^{n,i}))$, $\gamma^W := (\gamma^W_y)_{y\in [d]}$, we have that:
    \begin{align}\label{eqn:actual_optimal}
        \vr^{i} = J_0^{n,i} ([\Gamma_0^{-i} ; \gamma^W]) = \inf_{\beta\in \calA^{n,i}} J_0^{n,i} ([\Gamma_0^{-i} ; \beta]).
    \end{align} Interpreting \eqref{eqn:actual_optimal} in words, a stationary Markovian strategy is optimal for a player deviating from $\Gamma_0$. 
\end{lemma}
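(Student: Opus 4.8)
The statement is a verification theorem for the finite-state, long-run-average control problem faced by a single player who deviates from $\Gamma_0$, and the plan is to produce the pair $(\vr^i,W_0^i)$ by a vanishing-discount procedure and then to read off optimality of a stationary control from a Dynkin-formula argument. First I would fix $n$ and $i$ and set up that control problem: the competitors $j\neq i$ are frozen into the feedback rates $\gamma^j_0(\bx)=\gamma^*(x^j,\Delta_{x^j}U_0(\cdot,\mu^{n,j}_{\bx}))$, player $i$ chooses rates in $\A^d_{-x^i}$, and the instantaneous cost is $f(x^i,\beta)+F(x^i,\mu^{n,i}_{\bx})$. Because the $n-1$ competitors all use the \emph{same symmetric} feedback and the cost depends on $\bx$ only through $(x^i,\mu^{n,i}_{\bx})$, the pair $(X^{i}_t,\mu^{n,i}_t)$ is a controlled Markov process on the \emph{finite} set $[d]\times\calP^{n-1}([d])$ and is a sufficient statistic; so the relevant value function lives on $[d]\times\calP^{n-1}([d])$, and it suffices to optimise over controls adapted to $(x^i,\mu^{n,i})$.

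For $\lambda>0$ I would consider player $i$'s $\lambda$-discounted problem against the frozen competitors. Standard discounted dynamic programming on the finite state space (after uniformisation the discounted Bellman operator is a contraction) gives a unique bounded solution $V_\lambda^i$ of the discounted Bellman equation, with the Hamiltonian minimiser furnishing a stationary $\lambda$-optimal control. The crux is the limit $\lambda\downarrow 0$, and here assumption ($A_1$) is decisive: every single-coordinate transition rate in the $n$-particle system is bounded below by $\fra_l>0$ and above by a constant depending only on $\fra_u$ and $d$, so the controlled chain satisfies a uniform minorisation (Doeblin) condition \emph{uniformly over all admissible controls of player $i$}, hence is uniformly ergodic with constants independent of the control and of $\lambda$. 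The usual ergodic-control estimates then show that $\lambda V_\lambda^i(\cdot)$ is bounded uniformly in $\lambda$ and that the centred function $V_\lambda^i-V_\lambda^i(\bx_\ast)$, for a fixed reference configuration $\bx_\ast$, is bounded uniformly in $\lambda$. Since $[d]\times\calP^{n-1}([d])$ is finite, a subsequence $\lambda_k\downarrow0$ yields $\lambda_k V_{\lambda_k}^i(\bx_\ast)\to\vr^i\in\R$ and $V_{\lambda_k}^i-V_{\lambda_k}^i(\bx_\ast)\to W_0^i$; passing to the limit in the discounted Bellman equation (finite sums, continuous data) gives that $(\vr^i,W_0^i)$ solves the ergodic dynamic programming equation of the problem, which, after re-expressing the competitors' generator through their $U_0$-based feedback $\gamma_0$ and using exchangeability to rewrite everything via $(x^i,\mu^{n,i})$, is precisely \eqref{eqn:aux_erg}.

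Then I would verify \eqref{eqn:actual_optimal}. With $(\vr^i,W_0^i)$ in hand and $W_0^i$ bounded, fix any $\beta\in\calA^{n,i}$; by \Cref{lem:freedom} the initialisation is irrelevant. Applying Dynkin's formula to $t\mapsto W_0^i(X^{[\Gamma_0^{-i};\beta],i}_t,\mu^{n,i}_t)$, its drift at time $s$ is the generator of the reduced controlled process applied to $W_0^i$, which by \eqref{eqn:aux_erg} and the definition of $H$ as a minimum is at least $\vr^i-f(X^i_s,\beta(s,\bX_s))-F(X^i_s,\mu^{n,i}_s)$, with equality precisely when $\beta(s,\cdot)$ equals the unique Hamiltonian minimiser $\gamma^W$ at the current state (using ($A_2$)). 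Taking expectations, rearranging, dividing by $T$ and letting $T\to\iy$ — the martingale term has zero mean and $W_0^i(\cdot)/T\to0$ uniformly because $W_0^i$ is bounded — yields $J_0^{n,i}([\Gamma_0^{-i};\beta])\geq\vr^i$ for every $\beta$, with equality for $\beta=\gamma^W$; since $\gamma^W$ is stationary, a stationary Markovian strategy is optimal for the deviating player, which is \eqref{eqn:actual_optimal}.

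I expect the principal difficulty to be the uniform-in-$\lambda$ control of the centred discounted value $V_\lambda^i-V_\lambda^i(\bx_\ast)$ underpinning the vanishing-discount limit — this is exactly where ($A_1$) (all rates bounded away from zero on a finite, fully connected configuration space) does the work, via a coupling/Doeblin bound uniform over all, including time-dependent, deviations of player $i$. A lesser, bookkeeping-type point is justifying the reduction of the state from $[d]^n$ to $[d]\times\calP^{n-1}([d])$ and matching the limiting ergodic equation term by term with \eqref{eqn:aux_erg}; both rest on exchangeability of the $n-1$ competitors under the symmetric feedback $\gamma_0$ together with the definition of $\gamma^j_0$.
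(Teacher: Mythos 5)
Your proposal follows essentially the same route as the paper's proof: an auxiliary discounted problem solved by a contraction argument, a uniform-in-discount bound on the centred discounted value coming from the rates being bounded away from zero (you phrase this as a Doeblin minorisation uniform over player $i$'s controls; the paper instead bounds the expected hitting time $\E_{\bx}[\hat\tau]\le N$ of a fixed reference configuration $\hat\bx$ and deduces $|w^i_r(\bx)-w^i_r(\hat\bx)|\le 2NC_{f+F}$ — the same mechanism), a vanishing-discount limit along a subsequence, and a Dynkin/It\^o verification giving optimality of the stationary feedback $\gamma^W$. The one step you gloss over, and which does not go through "by mere re-expression," is the identification of your limiting equation with \eqref{eqn:aux_erg}. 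The genuine ergodic dynamic-programming equation of the reduced controlled pair $(X^i,\mu^{n,i})$ carries the competitors' generator applied to the unknown $W_0^i$ (finite differences of $W_0^i$ in the measure variable induced by jumps of players $j\neq i$), whereas \eqref{eqn:aux_erg} carries the term $\sum_{j\neq i}\gamma^j_0(\bx)\cdot\Delta_{x^j}U_0(\cdot,\mu^{n,j}_{\bx})$, in which $U_0$ — a \emph{known} function — appears in place of $W_0^i$; these are different equations with, in general, different solutions. The paper sidesteps this by \emph{defining} the discounted operator $R(w,a)$ with the $U_0$-term built in as a fixed exogenous running cost (only player $i$'s own transitions enter the contraction through $\phi^w$), so that the fixed point solves \eqref{eqn:aux_discount} by construction, and then runs the verification for that specific equation. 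If you want to arrive at \eqref{eqn:aux_erg} from the Bellman equation of the actual control problem, you must either justify replacing the $W_0^i$-generator term by the $U_0$-term or restructure the construction as the paper does; as written, your matching claim is a gap.
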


\begin{lemma}\label{lem:miracle}
    There exists $C>0$, independent of $n$, such that for all $\beta\in\calA^{n,i}$ and all $\pi\in\calP([d]^n)$:
    \[
    \limsup_{T\to\iy} \frac{1}{T} \E_\pi \int_0^T \sum_{x\in [d]} (\mu^{[\Gamma_0^{-i} ; \beta]}_{t,x} + \check \mu_x(t)) |\Delta_x U_0(\cdot, \mu_t^{[\Gamma_0^{-i} ; \beta]}) - \Delta_x \check u(t)|^2 dt \leq \frac{C}{n}.
    \]
\end{lemma}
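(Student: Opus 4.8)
The plan is to run a duality-type argument modeled on the MFG system, but adapted to the random empirical measure $\mu^{[\Gamma_0^{-i};\beta]}$. By \Cref{lem:stat_is_enough} it suffices to treat $\beta \in \calA^{n,i}_S$ stationary, since the optimal deviating strategy is stationary; and by \Cref{lem:freedom} the $\limsup$ in $T$ of the time-average can be computed under the stationary distribution $\pi_0$ of $\bX^{[\Gamma_0^{-i};\beta]}$. Fix this initialization. Writing $\nu_t := \mu^{[\Gamma_0^{-i};\beta]}_t$ and $\check\mu(t)$, I would test the difference $U_0(\cdot,\nu_t) - \check u(t)$ against the measure $\nu_t$ (or against $\nu_t + \check\mu(t)$), differentiating the pairing $\sum_x \nu_{t,x}(U_0(x,\nu_t) - \check u_x(t))$ in $t$. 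Using the ME \eqref{ME} for $U_0$, the Bellman equation in \eqref{erg_MFG_t} for $\check u$, and the dynamics \eqref{eq:mu_beta}--\eqref{eq:K} for $\nu_t$ (whose generator acts through the empirical rates $\Gamma^{\bbeta}$), the two $\vr$'s cancel, and the cross terms reorganize into: (i) a Lasry--Lions monotone term coming from $F(x,\nu_t)-F(x,\check\mu(t))$ paired with $\nu_{t,x}-\check\mu_x(t)$, which is $\ge 0$ and can be discarded after moving it to the favorable side; (ii) a strong-concavity term from the Hamiltonian, which by $(B_2)$ (i.e.\ $D^2_{pp}H \le -C_{2,H}$) produces exactly a negative multiple of $\sum_x \nu_{t,x}|\Delta_x U_0(\cdot,\nu_t) - \Delta_x\check u(t)|^2$ — this is the quantity we want to bound; (iii) discretization/linearization remainders controlled by the Lipschitz bounds on $D_pH$, $\gamma^*$, $D^\eta U_0$, producing $O(1/n)$ errors (the gap between $\Gamma^{\bbeta}$-driven dynamics and the $n\to\infty$ Kolmogorov flow, plus the $O(1/n)$ difference between $\mu^{n,i}$ and $\mu^n$ and between removing/keeping the $i$-th player); and (iv) martingale terms from the compensated Poisson measures $M^{\bbeta}_{xy}$, which vanish in expectation, and whose quadratic-variation contributions to the time-average also scale like $1/n$ by the computation already done in \eqref{eqn:obs1}.

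Concretely, after integrating from $0$ to $T$, dividing by $T$, taking $\E_{\pi_0}$, and sending $T\to\iy$: the left endpoint and right endpoint of the pairing are bounded (since $U_0$ and $\check u$ are bounded, Remark \ref{rmk:check_u_bounded}), so $\tfrac1T(\cdots)\big|_0^T \to 0$; stationarity makes the time-derivative-of-pairing term average to zero; the martingale terms drop; the monotone term (i) has the right sign to be dropped; and we are left with
\[
C_{2,H}\,\limsup_{T\to\iy}\frac1T\E_{\pi_0}\!\int_0^T \sum_{x}\nu_{t,x}\,|\Delta_x U_0(\cdot,\nu_t)-\Delta_x\check u(t)|^2\,dt \;\le\; \frac{C}{n}
\]
plus, to also capture the $\check\mu_x(t)$-weighted piece asserted in the statement, I would run the same computation testing against $\check\mu(t)$ in place of $\nu_t$ (using Kolmogorov's equation for $\check\mu$ and again the ME/Bellman pair), or alternatively note that $|\check\mu_x(t)-\nu_{t,x}|$ is already controlled in $L^1$-expectation by \Cref{lem:pre_duality} together with the bound just obtained, so that swapping the weight $\nu_{t,x}$ for $\check\mu_x(t)$ costs only another $O(1/\sqrt n)$ times a bounded factor — and since the squared finite differences are uniformly bounded (all arguments of $U_0,\check u$ lie in a fixed compact set), this is absorbed. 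Dividing by $C_{2,H}$ gives the claim.

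The main obstacle I anticipate is the bookkeeping in step (iii): the dynamics of $\nu_t$ are driven by $\Gamma^{\bbeta}(s)$, whose $x$-row is $\tfrac1n\sum_i \beta^i_x(s,\bX_s)\1_{\{X^i_s=x\}}$, and for $j\neq i$ the players use $\gamma^*(x,\Delta_x U_0(\cdot,\mu^{n,j}_s))$ while we want to compare against $\gamma^*(x,\Delta_x U_0(\cdot,\nu_s))$ evaluated at the full empirical measure; reconciling the "leave-one-out" measures $\mu^{n,j}_s$ with $\nu_s = \mu^n_s$, and isolating the single deviating player $i$'s contribution (which is $O(1/n)$ in the generator), must be done carefully so that the linearization of $H$ around $\Delta_x\check u(t)$ genuinely yields the strong-concavity quadratic with the correct sign and the remainders are genuinely $O(1/n)$ rather than $O(1)$. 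This is precisely the place where the feedback problem would bite if one were not using the duality structure: the Lipschitz-only estimate loops, but pairing against the measure converts the dangerous first-order term into the favorable monotone term (i) and the controlled quadratic (ii). A secondary technical point is justifying that $\gamma^W$ and the stationary reduction let us compute everything under $\pi_0$; this is exactly what \Cref{lem:stat_is_enough} is for, so it should go through cleanly.
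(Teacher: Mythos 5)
Your overall architecture is the paper's: an It\^o/duality computation pairing potentials against measures, with the Lasry--Lions monotonicity of $F$ supplying a disposable term of the right sign, the uniform concavity of $H$ from $(B_2)$ producing the quadratic $|\Delta_x v|^2$ with a negative coefficient, martingale terms vanishing in expectation, the quadratic-covariation and leave-one-out discrepancies contributing $\calO(1/n)$, and the boundary terms killed by dividing by $T$. However, the central pairing you propose is not the one that makes the signs work. You differentiate $\sum_x \nu_{t,x}\,(U_0(x,\nu_t)-\check u_x(t))$ (or the version weighted by $\nu_t+\check\mu(t)$), but then assert in item (i) that the $F$-terms come out paired with $\nu_{t,x}-\check\mu_x(t)$. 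They do not: with weight $\nu_{t,x}$ (or $\nu_{t,x}+\check\mu_x(t)$) the $F$-difference has no sign from monotonicity. The correct object is $\sum_x m_x(t)v_x(t)$ with $m_x(t):=\mu^{\Ups}_{t,x}-\check\mu_x(t)$, i.e.\ the difference of potentials tested against the \emph{difference} of measures; it is precisely this choice that (a) yields the monotone term $\sum_x m_x(t)\,(F(x,\mu^\Ups_t)-F(x,\check\mu(t)))\ge 0$, and (b) after inserting the two Kolmogorov equations and the identity $\sum_x\gamma^*_x(y,\cdot)=0$, splits the Hamiltonian cross-terms into two pieces, one weighted by $\mu^\Ups_{t,x}$ and one by $\check\mu_x(t)$, each controlled by $-C_{2,H}|\Delta_x v(t)|^2$ via concavity at the two base points. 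That is where the sum weight $\mu^\Ups_{t,x}+\check\mu_x(t)$ in the statement comes from; it is not obtained by a posteriori swapping weights using \Cref{lem:pre_duality}. Your fallback of ``running the computation twice and combining'' recovers the right pairing only if you subtract the two runs, which you do not say.

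Two further points. First, your opening reduction is invalid as justified: the lemma quantifies over \emph{all} $\beta\in\calA^{n,i}$ and all $\pi$, and \Cref{lem:stat_is_enough} (that the \emph{optimal} deviation is stationary) does not let you reduce a universally quantified estimate to stationary $\beta$. The reduction is also unnecessary: the boundary term $\tfrac1T\,[m(t)\cdot v(t)]_0^T$ vanishes simply because $m$ and $v$ are uniformly bounded, so no stationary initialization is needed and the argument runs for arbitrary $\beta$ and $\pi$ directly. Second, the quadratic-covariation term $\sum_x dm_x\,dv_x$ is not handled by the BDG computation in \eqref{eqn:obs1}; it is bounded by $CT/n$ deterministically in expectation because each jump moves $m_x$ by $\calO(1/n)$ and $v_x$ by $\calO(1/n)$ (Lipschitz continuity of $U_0$) while the expected number of jumps is $\calO(nT)$. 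With the pairing corrected and these points repaired, your plan coincides with the paper's proof.
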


\subsection{Proof of \Cref{thm:epsilon_equilibrium_2}}

As with \Cref{thm:epsilon_equilibrium}, we split the proof into two halves. The first part proves the propagation of chaos given in \eqref{eqn:pi_thm2}. For the propagation of chaos, we start by using \Cref{lem:pre_duality} to pass off the problem of comparing processes to comparing their strategies. The strategies are Lipschitz, so it suffices to compare their different potential functions. Then we use the duality-style lemma, \Cref{lem:miracle}, to show that the integral term left over from the application of \Cref{lem:pre_duality}, and that is comprised of these potential functions, is small. Along the way, we will need to use that the stationary distribution $\pi^\beta$ forces the processes inside the expectation to be independent of time; this helps us periodically move the processes outside of pesky, time-dependent integrals. 

In order to use the propagation of chaos to prove the profile is an asymptotic Nash equilibrium, we must start by restricting the proof to stationary strategies. Otherwise, we would not necessarily have a stationary distribution and the prior part would not apply. After we apply propagation of chaos and obtain the result for stationary strategies, we can use \Cref{lem:stat_is_enough} to upgrade the result so that it holds for any Markovian strategy.

\subsubsection{Propagation of chaos}
In this part we will prove: \begin{align}\label{eqn:b_prop_chaos_0}
    \E_{\pi^\beta} |\mu^{[\Gamma_0^{-i} ; \beta^S]}_0 - \bar\mu| \leq \frac{C}{\sqrt{n}},
\end{align} for $\beta^S \in\calA^{n,i}_S$, an arbitrary stationary Markovian strategy and $\pi^{\beta} \in\calP([d]^n)$, the unique stationary distribution for the jump process $\bX^{[\Gamma_0^{-i} ; \beta^S]}$. Note that \eqref{eqn:b_prop_chaos_0} is more general than \eqref{eqn:pi_thm2}. The full strength of \eqref{eqn:b_prop_chaos_0} is necessary for the latter part of the proof, where we prove $\Gamma_0$ is an asymptotic Nash equilibrium, and so deal with a deviating player.


Recall that $\Gamma_0$ is comprised of $\gamma_0$ defined in \eqref{eqn:me_control} and recall $\bar\gamma$ from \eqref{eqn:stationary_control}. We now apply \Cref{lem:pre_duality} with $\bbeta = [\Gamma_0^{-i} ; \beta^S]$ and $\pi = \pi^\beta$, use the fact that the rates are bounded to aggregate another term into $C/\sqrt{n}$, and then use that $(\tfrac{n-1}{n}) (\mu^{[\Gamma_0^{-i} ; \beta^S],i}_{s,x}) \leq 1$ to get:
\begin{align*}
    \E_{\pi^\beta} |\mu^{[\Gamma_0^{-i} ; \beta^S]}_t - \bar\mu| &\leq Ce^{-ct} \E_{\pi^\beta} |\mu^{[\Gamma_0^{-i} ; \beta^S]}_0 - \bar\mu| +\frac{C}{\sqrt{n}} \\
        &\quad + \E_{\pi^\beta} \int_0^t e^{cs-ct} \sum_{x\in [d]} \mu^{[\Gamma_0^{-i} ; \beta^S]}_{s,x} \big|\big(\tfrac{n-1}{n} (\mu^{[\Gamma_0^{-i} ; \beta^S],i}_{s,x}) [\gamma_{0,yx}(s) - \bar \gamma_{yx} ] \\
            &\qquad\qquad\qquad + \tfrac{1}{n}\1_{\big\{X^{[\Gamma_0^{-i} ; \beta^S],i}_s = x\big\}} [\beta_{yx}^S (\bX^{[\Gamma_0^{-i} ; \beta^S]}_s) - \bar \gamma_{yx}] \big)_{y\in [d]}\big| ds \\
    &\leq Ce^{-ct} \E_{\pi^\beta} |\mu^{[\Gamma_0^{-i} ; \beta^S]}_0 - \bar \mu| +\frac{C}{\sqrt{n}} \\
        &\quad + \E_{\pi^\beta} \int_0^t e^{cs-ct} \sum_{x\in [d]} \mu^{[\Gamma_0^{-i} ; \beta^S]}_{s,x} \big|\big(\tfrac{n-1}{n}\big) (\mu^{[\Gamma_0^{-i} ; \beta^S],i}_{s,x}) [\gamma_{0,yx}(s) - \bar \gamma_{yx} ]\big| ds \\
    &\leq Ce^{-ct} \E_{\pi^\beta} |\mu^{[\Gamma_0^{-i} ; \beta^S]}_0 - \bar \mu| +\frac{C}{\sqrt{n}} \\
        &\quad + \E_{\pi^\beta} \int_0^t e^{cs-ct} \sum_{x\in [d]} \mu^{[\Gamma_0^{-i} ; \beta^S]}_{s,x} |\gamma_{0,yx}(s) - \bar \gamma_{yx}| ds .
\end{align*}
Now, leveraging the fact that $\pi^\beta$ represents the stationary distribution, along with the definitions of $\gamma_0$ and $\bar\gamma$ in terms of $\gamma^*$, and considering the Lipschitz continuity of $\gamma^*$, we can derive:
\begin{align}\label{eqn:32_1}
    \begin{split}
    \E_{\pi^\beta} |\mu^{[\Gamma_0^{-i} ; \beta^S]}_t - \bar\mu| &\leq Ce^{-ct} \E_{\pi^\beta} |\mu^{[\Gamma_0^{-i} ; \beta^S]}_0 - \bar\mu| +\frac{C}{\sqrt{n}} \\
    &\quad + C \E_{\pi^\beta}\Big[ \sum_{x\in [d]} \mu^{[\Gamma_0^{-i} ; \beta^S]}_{0,x} \big|\Delta_x (U_0(\cdot, \mu^{[\Gamma_0^{-i} ; \beta^S]}_0) - \bar u)\big|\Big].
    \end{split}
\end{align} We can also use that $\pi^\beta$ is the stationary distribution, then \Cref{lem:freedom} and then \Cref{lem:miracle} to get:
\begin{align}\label{eqn:32_2}
    \begin{split}
        \E_{\pi^\beta}\Big[ \sum_{x\in [d]} (\mu^{[\Gamma_0^{-i} ; \beta^S]}_{0,x} &+ \bar\mu_x) \big|\Delta_x (U_0(\cdot, \mu^{[\Gamma_0^{-i} ; \beta^S]}_0) - \bar u)\big|^2 \Big] \\
        &= \limsup_{T\to\iy} \frac{1}{T} \E_{\pi^\beta}\int_0^T \Big[ \sum_{x\in [d]} (\mu^{[\Gamma_0^{-i} ; \beta^S]}_{t,x} + \bar\mu_x) \big|\Delta_x (U_0(\cdot, \mu^{[\Gamma_0^{-i} ; \beta^S]}_t) - \bar u)\big|^2 dt\\
        &\leq \frac{C}{n}.
    \end{split}
\end{align} We can use Jensen's inequality for $\sqrt{\cdot}$ and that for any $\eta\in\calP([d])$, $\eta_x^2\leq \eta_x$ for all $x\in [d]$ in \eqref{eqn:32_1} and then \eqref{eqn:32_2} to get:
\begin{align*}
    \E_{\pi^\beta} |\mu^{[\Gamma_0^{-i} ; \beta^S]}_t - \bar\mu| &\leq Ce^{-ct} \E_{\pi^\beta} |\mu^{[\Gamma_0^{-i} ; \beta^S]}_0 - \bar\mu| +\frac{C}{\sqrt{n}} \\
        &\quad + C \Big(\E_{\pi^\beta}\Big[ \sum_{x\in [d]} \mu^{[\Gamma_0^{-i} ; \beta^S]}_{0,x} \big|\Delta_x (U_0(\cdot, \mu^{[\Gamma_0^{-i} ; \beta^S]}_0) - \bar u)\big|^2\Big]\Big)^{1/2}\\
    &\leq Ce^{-ct} \E_{\pi^\beta} |\mu^{[\Gamma_0^{-i} ; \beta^S]}_0 - \bar\mu| +\frac{C}{\sqrt{n}} .
\end{align*} We applied the stationary distribution on the left-hand side above as well, so it is independent of $t$. Passing the limit as $t\to\iy$ then yields \eqref{eqn:b_prop_chaos_0}. When $\beta = \gamma_0$ then, we get \eqref{eqn:pi_thm2} where the limiting result follows by ergodicity of the Markov process $\mu_t^{\Gamma_0}$.

\subsubsection{Approximate Markovian Nash Equilibrium}

Now we prove the main part of the theorem. By a small abuse of notation, we will write $X^{[\Gamma_0^{-i} ; \beta^S], i}$ as $X^{\beta^S, i}$, keeping in mind that the position of the $i$-th player will depend on $\Gamma_0$ implicitly since $\beta^S$ depends on the positions of the other players. Again we rely on $\Cref{lem:freedom}$ for the freedom to choose the initialization of the players. We will see that it is particularly convenient to use $\pi^\beta$, the stationary distribution. By the definition of $J_0^{n,i}$, using that $\pi^\beta$ is the stationary distribution, adding and subtracting a term, and once more using that $\pi^\beta$ is the stationary distribution:

\begin{align}\label{eqn:7} 
\begin{split}
    J^{n,i}_0 ([\Gamma^{-i}_0; &\beta^S]) \\
    &= \limsup_{T\to\iy} \frac{1}{T} \EE_{\pi^\beta} \int_0^T [f(X^{\beta^S, i}_t, \beta^S( \bX^{[\Gamma_0^{-i};\beta^S]}_t)) + F(X^{\beta^S, i}_t,  \mu^{[\Gamma^{-i}_0 ; \beta^S],i}_t)]dt \\
    &= \EE_{\pi^\beta} \Big[f(X^{\beta^S, i}_0, \beta^S( \bX^{[\Gamma_0^{-i};\beta^S]}_0)) + F(X^{\beta^S, i}_0,  \mu^{[\Gamma^{-i}_0 ; \beta^S],i}_0)\Big] \\
    &= \EE_{\pi^\beta} \Big[f(X^{\beta^S, i}_0, \beta^S( \bX^{[\Gamma_0^{-i};\beta^S]}_0)) + F(X^{\beta^S, i}_0, \bar \mu ) + F(X^{\beta^S, i}_0,  \mu^{[\Gamma_0^{-i};\beta^S],i}_0) - F(X^{\beta^S, i}_0, \bar \mu )\Big] \\
    &= \limsup_{T\to\iy} \frac{1}{T}\EE_{\pi^\beta} \int_0^T [f(X^{\beta^S, i}_t, \beta^S( \bX^{[\Gamma_0^{-i};\beta^S]}_t)) + F(X^{\beta^S, i}_t, \bar \mu )]dt \\
        &\qquad+ \EE_{\pi^\beta}  \Big[F(X^{\beta^S, i}_0,  \mu^{[\Gamma_0^{-i};\beta^S],i}_0) - F(X^{\beta^S, i}_0, \bar \mu )\Big].\\
\end{split}
\end{align} 

For the second term in \eqref{eqn:7}, we can use the Lipschitz continuity of $F$, followed by \eqref{eqn:b_prop_chaos_0} to get:
\begin{align}\notag
    \begin{split}
        \Big| \EE_{\pi^\beta} \Big[F(X^{\beta^S, i}_0,  \mu^{[\Gamma_0^{-i};\beta^S],i}_0) - F(X^{\beta^S, i}_0, \bar \mu ) \Big] \Big| &\leq C_{L,F} \EE_{\pi^\beta} | \mu^{[\Gamma_0^{-i};\beta^S],i}_0 - \bar \mu |  \\
        &\leq \frac{C}{\sqrt{n}}.
    \end{split} 
\end{align} 

By the same arguments as in the proof of \Cref{thm:epsilon_equilibrium} then, specifically for \eqref{eqn:3} and \eqref{eqn:cn_nash}, we have that:
\begin{align*}
    J_0^{n,i}(\Gamma_0) - J_0^{n,i} ([\Gamma_0^{-i} ; \beta^S]) \leq \calO(n^{-1/2}).
\end{align*} So far in the proof, we had to assume $\beta^S$ was stationary in order to invoke the stationary distribution. Using \Cref{lem:stat_is_enough} we can now upgrade this result to hold for all $\beta \in \calA^{n,i}$ non-stationary. Recalling $\gamma^W$ from \Cref{lem:stat_is_enough}, and using the prior display with $\beta^S = \gamma^W$: 
\begin{align*}
         J_0^{n,i}(\Gamma_0) - \inf_{\beta \in \calA^{n,i}} J_0^{n,i} ([\Gamma_0^{-i} ; \beta])  = J_0^{n,i}(\Gamma_0) - J_0^{n,i} ([\Gamma_0^{-i} ; \gamma^W]) \leq \calO(n^{-1/2}).
\end{align*} $\square$

\subsection{Proof of \Cref{lem:stat_is_enough}}

The proof proceeds in two main steps. For the first, we establish a unique solution to an auxiliary discounted system. Then, by a vanishing discount argument, we obtain a solution to the auxiliary ergodic system \eqref{eqn:aux_erg}. Finally, by a verification argument---that is, plugging in a carefully chosen process to the mock potential function from \eqref{eqn:aux_erg} and using It\^o's lemma---we get \eqref{eqn:actual_optimal}.

\subsubsection{Existence for a discounted system}

We start by obtaining a solution to an auxiliary discounted system in order to ultimately construct a solution to the ergodic system \eqref{eqn:aux_erg}. We proceed by defining a map $S$ for which any fixed point will solve the discounted system. We prove that for any discount $r$, $S$ is a contraction and therefore there exists a unique solution $W^i_r$ to the auxiliary discounted system. 

Let $r>0$ be the discount factor. We will first prove that there exists $W^i_r : [d]\times \calP^{n-1}([d])\to [0, C_{f+F}/r]$ solving:
\begin{align}\label{eqn:aux_discount}
        rW_r^i(x^i, \mu_{\bx}^{n,i}) &= H(x^i,\Delta_{x^i} W_r^i (\cdot, \mu_{\bx}^{n,i})) + F(x^i,\mu_{\bx}^{n,i}) + \sum_{j\neq i} \gamma^j_0(\bx) \cdot \Delta_{x^j} U_0 (\cdot, \mu_{\bx}^{n,j}). 
\end{align} Define:
\[
\calV := \{ w : [d]^n \to [0,C_{f+F}/r] \mid \exists \phi^w:[d]\times \calP^{n-1}([d]) \to [0,C_{f+F}/r] \text{ s.t. }w(\bx) = \phi^w(x^i, \mu^{n,i}_{\bx})  \}.
\] Define a map $S : \calV \to\calV$ by:
\[
S (w) (\bx) := \inf_{a\in\A^d_{x^i}} R(w, a) (\bx)
\] where:
\begin{align*}
    R(w, a)(\bx) := \frac{1}{r+\sum_{y\neq x^i} a_y} &\Big[\sum_{y,y\neq x^i} a_y \phi^w(y, \mu^{n,i}_{\bx}) + F(x^i, \mu^{n,i}_{\bx}) + f(x^i, a) \\
    &\qquad+ \sum_{j\neq i} \gamma^j_0(\bx) \cdot \Delta_{x^j} U_0 (\cdot, \mu^{n,j}_{\bx})\Big].
\end{align*} Now, 
\begin{align}\label{eqn:S_contraction}
\begin{split}
|(S(w)-S(\tilde w))(\bx)|&\le 
\sup_{a\in\A^d_{x^i}}|(R(w, a) - R(\tilde w, a))(\bx)|\\
&\le \sup_{a\in \A^d_{x^i}}\Big\{\frac{1}{r+\sum_{y\neq x^i} a_y}\Big[\sum_{y,y\ne x^i} a_y|\phi^w(y,\mu^{n,i}_{\bx})-\phi^{\tilde w} (y,\mu^{n,i}_{\bx})| \Big]\Big\}\\
&\leq \sup_{a\in \A^d_{x^i}}\left\{ \frac{\sum_{y\neq x^i} a_y}{r+\sum_{y\neq x^i} a_y}\left[\|w-\tilde w\|_{\sup} \right]\right\}\\
&\le \tilde r \|w-\tilde w\|_{\sup},
\end{split}
\end{align} 
where 
\[
\|w\|_{\sup}:= \max_{\bx\in [d]^n} |w(\bx)|
\] and \[
\tilde{r} := \sup_{x^i \in [d]}\sup_{a \in \A^d_{x^i}} \left\{\frac{\sum_{y\neq x^i} a_y}{r+\sum_{y\neq x^i} a_y} \right\}\in(0,1).
\] Taking the supremum of the left-hand side of \eqref{eqn:S_contraction} over all $\bx$, we obtain that $S$ is a contraction. By Banach fixed point theorem there exists a unique fixed point of $S$, which we call $w^{i}_r$, with $w^i_r(\bx) = W^{i}_r(x^i, \mu^{n,i}_{\bx})$ for some $W^i_r : [d]\times \calP^{n-1}([d]) \to [0,C_{f+F}/r]$ since $w^i_r \in \calV$. Unfurling the definition of $S$ and using that $w^i_r$ is its fixed point, we find that $W^i_r$ satisfies \eqref{eqn:aux_discount}.

\subsubsection{Verification and vanishing discount} 

Now we can make use of the discounted system \eqref{eqn:aux_discount} from the prior step in order to approach \eqref{eqn:aux_erg}. We first confirm that $w^i_r(\bx)$ equals a discounted cost under the strategy profile $\Ups_r$. This gives an alternate form for $w^i_r$ that we will use to show $w^i_r(\cdot) - w^i_r(\hat\bx)$ is bounded in maximum norm, for some $\hat\bx \in [d]^n$, by a stopping time argument. With a uniform bound, we pass to a subsequence of discounts decreasing to zero and show that the discounted system \eqref{eqn:aux_discount} converges to \eqref{eqn:aux_erg}. We then use another verification argument to get \eqref{eqn:actual_optimal}.

For brevity, define $\Ups_r := [\Gamma_0^{-i}; \gamma^{W_r}]$. To get $\vr^i$ and $W^i_0$, we proceed by a vanishing discount argument. To do this, we first verify that $W^i_r$ is the discounted cost:
\[
J_r^{n,i} (\bx, \Ups_r) := \E_{\bx} \int_0^\iy e^{-rt} \left(f(X^{\Ups_r, i}_t, \gamma^{W_r}(\bX^{\Ups_r}_t)) + F(X^{\Ups_r, i}_t, \mu^{\Ups_r, i}_t)\right) dt.
\]

By applying It\^o's lemma to $e^{-rt}w^{i}_r(\bX_t^{\Ups_r})$ and taking the expectations, we get that 
\begin{align}\label{eqn:Ito_dis_n}
&\E_{\bx}\left[e^{-rt} w^{i}_r(\bX_t^{\Ups_r})-w^{i}_r(\bX_0^{\Ups_r})\right]\\\notag
&\quad=
\E_{\bx}\Big[\int_0^t e^{-rs} \Big(-r w^{i}_r(\bX_s^{\Ups_r}) +\sum_{j\neq i} \gamma^j_0(\bX^{\Ups_r}_s) \cdot \Delta_{X^{\Ups_r, j}_s} U_0(\cdot, \mu^{\Ups_r,j}_s) \\\notag
&\qquad\qquad\qquad +\gamma^{W_r}(\bX_s^{\Ups_r})\cdot \Delta_{X^{\Ups_r, i}_s} W^{i}_r(\cdot, \mu^{\Ups_r, i}_s)\Big)ds\Big].
\end{align} 
We use \eqref{eqn:aux_discount} in \eqref{eqn:Ito_dis_n} and rearranging the terms, it follows that 
\begin{align*}
w^{i}_r(\bx) 
&= \E_{\bx}\left[e^{-rt}w^{i}_r(\bX_t^{\Ups_r})\right] + \E_{\bx}\Big[ \int_0^t e^{-rs} \left(f(X^{\Ups_r, i}_s,\gamma^{W_r}(\bX_s^{\Ups_r}))+F(X^{\Ups_r, i}_s, \mu^{\Ups_r, i}_s)\right)ds\Big].
\end{align*} 
Note that $w^i_r$ is bounded. Hence, passing to the limit implies:
\begin{align}\label{eqn:disc_verif}
w^{i}_r(\bx)&= \lim_{t\to\infty}\Big\{\E_{\bx} \Big[e^{-rt} w^{i}_r(\bX_t^{\Ups_r})\Big] + \E_{\bx} \Big[ \int_0^t e^{-rs} \left(f(X^{\Ups_r, i}_s, \gamma^{W_r}(\bX_s^{\Ups_r}))+F(X^{\Ups_r, i}_s, \mu^{\Ups_r, i}_s)\right)ds\Big]\Big\}\\ \notag
&= \E_{\bx}\Big[ \int_0^\infty e^{-rs} \left(f(X^{\Ups_r, i}_s, \gamma^{W_r}(\bX_s^{\Ups_r}))+F(X^{\Ups_r, i}_s, \mu^{\Ups_r, i}_s)\right)ds\Big]\\ \notag
&= J^{n,i}_r(\bx,\Ups_r).
\end{align}

Recall that players choose their rates from the set $[\mathfrak{a}_l, \mathfrak{a}_u]$ where $0<\mathfrak{a}_l<\mathfrak{a}_u <\infty$. Since there are finitely many configurations, every configuration is positive recurrent so there exists $N\in (0,\infty)$, independent of $r$, such that for any arbitrary $\bx, \hat\bx\in [d]^n$, one has $\EE_{\bx}[\hat\tau] \leq N$ where $\hat\tau:=\inf\{t\geq 0\mid \bX_t = \hat\bx\}$. 

Fix an arbitrary $\hat\bx\in [d]^n$. By definition and some arithmetic:
\begin{align*}
    J^{n,i}_r(\bx,\Ups_r) &= \E_{\bx} \Big[ \int_0^{\hat{\tau}} e^{-rt}\left(f(X^{\Ups_r, i}_t, \gamma^{W_r}(\bX_t^{\Ups_r}))+F(X^{\Ups_r, i}_t, \mu^{\Ups_r, i}_t) \right) dt + J^{n,i}_r(\bX_{\hat{\tau}},\Ups_r) \\
    &\qquad\quad - (1-e^{-r\hat{\tau}})J^{n,i}_r(\bX_{\hat{\tau}},\Ups_r)\Big].
\end{align*} By definition of $\hat{\tau}$, we know that $\E[J^{n,i}_r(\bX_{\hat{\tau}},\Ups_r)] = J^{n,i}_r(\hat{\bx},\Ups_r)$ and so by the previous computation and the introduction of absolute value, we see that:
\begin{align*}
    |J^{n,i}_r(\bx,\Ups_r) - J^{n,i}_r(\hat{\bx},\Ups_r)| &\leq \E_{\bx} \Bigg[ \int_0^{\hat{\tau}} e^{-rt}\left|f(X^{\Ups_r, i}_t, \gamma^{W_r}(\bX_t^{\Ups_r}))+F(X^{\Ups_r, i}_t, \mu^{\Ups_r, i}_t) \right| dt \Bigg]\\
    &\qquad + \E_{\bx}[(1-e^{-r\hat{\tau}})|J^{n,i}_r(\bX_{\hat{\tau}},\Ups_r)|].
\end{align*} To deal with the first term after the inequality we note that since $f$ and $F$ are bounded:
\begin{align*}
&\E_{\bx} \Bigg[ \int_0^{\hat{\tau}} e^{-rt}\left|f(X^{\Ups_r, i}_t, \gamma^{W_r}(\bX_t^{\Ups_r}))+  F(X^{\Ups_r, i}_t, \mu^{\Ups_r, i}_t) \right| dt \Bigg] 
\leq \E_{\bx}[\hat{\tau}]C_{f+F}\le NC_{f+F}.
\end{align*} 
For the second term, we use a max bound followed by the inequality $(1-e^{-rz})/r\le z$ for all $r\geq 0$ to see that:
\begin{align*}
    \E_{\bx}[(1-e^{-r\hat{\tau}})|J^{n,i}_r(\bX_{\hat{\tau}},\Ups_r)|] &\leq \E_{\bx}[r^{-1}(1-e^{-r\hat{\tau}})] \max_{\by \in [d]^n} r |J^{n,i}_r(\by,\Ups_r)|\\
    &\leq \E_{\bx}[\hat{\tau}] \max_{\by \in [d]^n} r |J^{n,i}_r(\by,\Ups_r)| \\
    &\leq N C_{f+F},
\end{align*} 
where recall that the solution $w_r^{i}$ to \eqref{eqn:aux_discount} was bounded above by $C_{f+F}/r$ for all configurations $\by\in [d]^n$ and so $J^{n,i}_r(\by,\Ups_r) = w_r^{i}(\by) \leq C_{f+F}r^{-1}$ implies that $r J^{n,i}_r(\by,\Ups_r) \leq C_{f+F}$. This gives the last inequality above. Combining these results:
\[
|w_r^{i} (\bx) - w_r^{i} (\hat{\bx})| \leq 2NC_{f+F}.
\] Since the bound is uniform with respect to $r$, we see that for any sequence $(r_m)_{m\geq 0}\to 0$:
\begin{align*}
\max_{\bx\in [d]^n} |w_{r_m}^{i} (\bx) - w_{r_m}^{i} (\hat{\bx})| \leq 2NC_{f+F}.
\end{align*} 
We note that $\max_{\bx\in [d]^n} | \cdot |$ as above is a norm on the space of functions $[d]^n \to [0,C_{f+F}/r]$ and that the difference $w_{r_m}^{i} (\bx) - w_{r_m}^{i} (\hat{\bx})$ is bounded uniformly in this norm. Therefore there exists a subsequence of $(r_m)_m$, which by abuse of notation we also refer to as $(r_m)_m$, such that $w_{r_m}^{i} (\cdot) - w_{r_m}^{i} (\hat{\bx})$ converges as $r_m\to 0$. Denote this limit by $w^{i}_0(\cdot)$ and since $w^i_{r_m} \in\calV$ there exists $W^i_0$ such that $w^i_0(\bx) = W^i_0(x^i, \mu^{n,i}_{\bx})$. So, for all $\bx\in [d]^n$, $W^i_{r_m}(x^i, \mu_{\bx}^{n,i}) - W^i_{r_m}(\hat x^i, \mu_{\hat \bx}^{n,i}) \to W^i_0(x^i, \mu^{n,i}_{\bx})$ as $m\to\iy$ too. 

We can now apply this vanishing discount to \eqref{eqn:aux_discount} to obtain a solution for \eqref{eqn:aux_erg}. We note that:
\begin{align*}
    \lim_{m\to\infty}\Delta_{x^i} W_{r_m}^{i}(\cdot, \mu^{n,i}_{\bx})  &= \lim_{m\to\infty} \big(W_{r_m}^{i}(y, \mu^{n,i}_{\bx})  -  W_{r_m}^{i}(x^i, \mu^{n,i}_{\bx})\big)_{y \in [d]}\\
    &= \lim_{m\to\infty} \big(W_{r_m}^{i}(y, \mu^{n,i}_{\bx}) -W_{r_m}^{i}(\hat x^i, \mu^{n,i}_{\hat \bx}) + W_{r_m}^{i}(\hat x^i, \mu^{n,i}_{\hat \bx}) -  W_{r_m}^{i}(x^i, \mu^{n,i}_{\bx})\big)_{y \in [d]}\\
    &= \big(W_{0}^{i}(y, \mu^{n,i}_{\bx})  -  W_{0}^{i}(x^i, \mu^{n,i}_{\bx})\big)_{y \in [d]}\\
    &= \Delta_{x^i} W_{0}^{i}(\cdot, \mu^{n,i}_{\bx}).
\end{align*} Again by passing to a subsequence, we can denote $\varrho^{i}$ to be the limit of the real-valued and bounded sequence $\{r_m W_{r_m}^{i} (\hat x^i, \mu_{\hat{\bx}}^{n,i})\}_{m\in\N}$. Once more, recall that $|W_{r_m}^{i}(x^i, \mu_{\bx}^{n,i})- W_{r_m}^{i}(\hat x^i, \mu_{\hat{\bx}}^{n,i})|$ is bounded for any configuration $\bx\in [d]^n$. Now we can apply the continuity of $H$ to obtain that:
\begin{align*}
    \varrho^{i} &=  \lim_{m\to\infty} [r_m W_{r_m}^{i} (\hat x^i, \mu^{n,i}_{\hat\bx}) + r_m(W_{r_m}^{i}(x^i, \mu_{\bx}^{n,i})- W_{r_m}^{i}(\hat x^i, \mu_{\hat{\bx}}^{n,i}))] \\
    &=  \lim_{m\to\infty} r_m W_{r_m}^{i} ( x^i, \mu^{n,i}_{\bx}) \\
    &= \lim_{m\to\infty} \Big[H(x^i,\Delta_{x^i} W^{i}_{r_m}(\cdot, \mu_{\bx}^{n,i})) +  F(x^i,\mu_{\bx}^{n,i}) + \sum_{j\neq i} \gamma^j_0(\bx) \cdot \Delta_{x^j} U_0 (\cdot, \mu_{\bx}^{n,j})\Big]\\
    &= H(x^i,\Delta_{x^i} W^{i}_{0}(\cdot, \mu_{\bx}^{n,i})) +  F(x^i,\mu_{\bx}^{n,i}) + \sum_{j\neq i} \gamma^j_0(\bx) \cdot \Delta_{x^j} U_0 (\cdot, \mu_{\bx}^{n,j}).
\end{align*} 
Hence, we established \eqref{eqn:aux_erg}.

As the final step of this proof, we now turn to prove \eqref{eqn:actual_optimal}. Let $\Ups := [\Gamma_0^{-i} ; \beta]$, $\beta\in\calA^{n,i}$. By another verification-style argument like for \eqref{eqn:disc_verif}, we can show that:
\begin{align*}
\E_{\bx}(w^{i}_0(\bX_0^{\Ups})) &\leq \E_{\bx} \left(w^{i}_0(\bX_T^{\Ups})\right) + \E_{\bx} \left(\int_0^T \left[ f(X_s^{\Ups, i}, \beta(s, \bX_s^{\Ups, i}))+F(X_s^{\Ups, i}, \mu^{\Ups, i}_s)\right]ds\right) -T\varrho^{i},
\end{align*} with equality above when $\beta = \gamma^W$ since $\gamma^W$ is the minimizer in \eqref{eqn:aux_erg}. Dividing by $T$ and passing the $\limsup$ as $T\to \iy$, we get:
\[
\vr^i \leq \E_{\bx} \left(\int_0^T \left[ f(X_s^{\Ups, i}, \beta(s, \bX_s^{\Ups, i}))+F(X_s^{\Ups, i}, \mu^{\Ups, i}_s)\right]ds\right) = J_0^{n,i}(\Ups).
\] Since we have equality when $\beta = \gamma^W$ and inequality otherwise, we conclude \eqref{eqn:actual_optimal}. $\square$

\subsection{Proof of \Cref{lem:miracle}}

The proof is modeled after duality estimates for the MFG, see \cite[Lemma~3.6]{CZ2022}, for example. In that case, the functions in play are solutions to ODEs and there is no randomness. Here, however, we must consider random processes and consequently, the proof is much more involved. Specifically, we will have to deal with three main issues: (1) martingale terms arising from the jump dynamics of the empirical measure, and (2) a deviating player with a potentially different strategy, causing error terms to aggregate, and (3) that because the process we consider is a jump process on $\calP^n([d])$, it may be modified slightly each time it appears, thus accumulating more error. The ultimate goal is to drag the accumulating errors through the proof and into the estimate at the end.

Fix arbitrary $\beta\in\calA^{n,i}$ and $\pi\in\calP([d]^n)$. For brevity, denote the strategy profile $\Ups := [\Gamma_0^{-i} ; \beta]$. Define $dL^\Ups_{yz}(t)$ as the martingale increment:
\[
dL^\Ups_{yz} (t) = dK^\Ups_{yz}(t) - \big[(n-1) \mu_{t,y}^{\Ups, i} \gamma_{z}^*(y,\Delta_y U_0(\cdot, \mu^{\Ups, -y
        }_t))  + \1_{\{X^{\Ups, i}_t = y\}} \beta_{yz}(t,\bX^{\Ups}_t)\big]dt,
\]
where recall that $K^{\bbeta}$ is defined generally in \eqref{eq:mu_beta}.  \textit{We now go through several displays to compute $dU_0(x,\mu^\Ups_t)$.} Recall that $\mu^{\Ups, -x}$ is the empirical distribution of the players, minus one player in position $x$. By It\^o's chain rule, 
\begin{align*}
d U_0 (x, \mu^\Ups_{t}) &= \sum_{y,z\in [d]} \big[ U_0 (x,\mu^\Ups_{t^-} + \tfrac{e_z -e_y}{n}) - U_0 (x, \mu^\Ups_{t^-})\big] dK^\Ups_{yz} (t)
\\
&\small{\text{ [adding and subtracting a rate term to get a martingale]}}
\\
&= \sum_{y,z\in [d]} \Big\{\big[ U_0 (x,\mu^\Ups_t + \tfrac{e_z -e_y}{n}) - U_0 (x, \mu^\Ups_t)\big] 
\\
&\qquad\qquad\qquad \times \big[ (n-1) \mu_{t,y}^{\Ups, i} \gamma_{z}^*(y,\Delta_y U_0(\cdot, \mu^{\Ups, -y
        }_t))  + \1_{\{X^{\Ups, i}_{t} = y\}} \beta_{yz}(t,\bX^{\Ups}_t)\big] dt\Big\}\\
        &\quad + \sum_{y,z\in [d]} \big[ U_0 (x,\mu^\Ups_{t^-} + \tfrac{e_z -e_y}{n}) - U_0 (x, \mu^\Ups_{t^-})\big] dL^\Ups_{yz} (t)
\\
&\small{\text{[adding and subtracting a $D^\eta U_0$ term]}}\\
    &= \sum_{y,z\in [d]} D^\eta_{yz} U_0 (x,\mu^\Ups_t ) \big[  \mu_{t,y}^{\Ups, i} \gamma_{z}^*(y,\Delta_y U_0(\cdot, \mu^{\Ups, -y
        }_t))  + \tfrac1n \1_{\{X^{\Ups, i}_s = y\}} \beta_{yz}(t,\bX^{\Ups}_t)  \big]dt \\
        &\quad + \sum_{y,z\in [d]} \big[ (n-1)((U_0 (x,\mu^\Ups_t + \tfrac{e_z -e_y}{n}) - U_0 (x, \mu^\Ups_t)) - D^\eta_{yz} U_0 (x,\mu^\Ups_t) \big] \\
        &\qquad\qquad \times \big[  \mu_{t,y}^{\Ups, i} \gamma_{z}^*(y,\Delta_y U_0(\cdot, \mu^{\Ups, -y
        }_t))  + \tfrac1n \1_{\{X^{\Ups, i}_s = y\}} \beta_{yz}(t,\bX^{\Ups}_t) \big] dt \\
        &\quad + \sum_{y,z\in [d]} \big[ U_0 (x,\mu^\Ups_{t^-} + \tfrac{e_z -e_y}{n}) - U_0 (x, \mu^\Ups_{t^-})\big] dL^\Ups_{yz} (t) 
\\
&\small{\text{[using Lipschitz continuity of $D^\eta U_0$ and the boundedness of the rates,} }\\
&\small{\text{ we can aggregate terms of order $\calO(n^{-1})$]}}\\
    &= \sum_{y,z\in [d]} D^\eta_{yz} U_0 (x,\mu^\Ups_t ) \mu_{t,y}^{\Ups, i} \gamma_{z}^*(y,\Delta_y U_0(\cdot, \mu^{\Ups, -y
        }_t)) dt \\
        &\quad + \sum_{y,z\in [d]} \big[ U_0 (x,\mu^\Ups_{t^-} + \tfrac{e_z -e_y}{n}) - U_0 (x, \mu^\Ups_{t^-})\big] dL^\Ups_{yz} (t) \\ 
        &\quad + \calO(n^{-1}) 
        \\
        &
 \small{\text{[From the Lipschitz continuity of $\gamma^*$ and $U_0$ as well as the facts that }}\\
 &\small{\text{ $|\mu^{\Ups, i}_{t,y} - \mu^{\Ups}_{t,y}| \leq C/n$, $\qquad | \mu^{\Ups, -y}_t - \mu^{\Ups}_{t}| \leq C/n$,}} \\
 &\small{\text{ we can aggregate more terms into $\calO(n^{-1})$]}}\\
    &= \sum_{y,z\in [d]} D^\eta_{yz} U_0 (x,\mu^\Ups_t ) \mu_{t,y}^{\Ups} \gamma_{z}^*(y,\Delta_y U_0(\cdot, \mu^{\Ups
        }_t)) dt \\
        &\quad + \sum_{y,z\in [d]} \big[ U_0 (x,\mu^\Ups_{t^-} + \tfrac{e_z -e_y}{n}) - U_0 (x, \mu^\Ups_{t^-})\big] dL^\Ups_{yz} (t) \\ 
        &\quad + \calO(n^{-1}) .
\end{align*} 

And finally by \eqref{ME}: 
\begin{align}\label{eqn:6} 
    \begin{split}
        dU_0(x,\mu^\Ups_{t}) &= (\vr - H(x,\Delta_x U_0(\cdot, \mu^{\Ups}_t)) - F(x, \mu^\Ups_t) ) dt \\
        &\quad + \sum_{y,z\in [d]} \big[ U_0 (x,\mu^\Ups_{t^-} + \tfrac{e_z -e_y}{n}) - U_0 (x, \mu^\Ups_{t^-})\big] dL^\Ups_{yz} (t) \\ 
        &\quad + \calO(n^{-1}),
    \end{split}
\end{align} Set the processes:
\begin{align*}
v_x(t) := U_0(x,\mu^{\Ups}_t) - \check u_x(t) \qquad\text{and}\qquad m_x(t):=\mu^{\Ups}_{t,x} - \check\mu_x(t),\qquad (t,x)\in\R_+\times[d].
\end{align*}  
Now, an application of It\^o's product rule implies that:
\begin{align*}
    m(T)\cdot v(T)-m(0)\cdot v(0)
    &= \int_{0}^{T}  \sum_{x\in [d]} d[m_x(t) v_x(t)]\\\notag
    &= \int_{0}^{T} \sum_{x\in [d]} \left(m_x(t)dv_x(t) + v_x(t)dm_x(t)+dm_x(t)dv_x(t)\right).
\end{align*} 
We will treat the three terms in the integrand of the previous display separately. Taking expectations on both sides, using \eqref{erg_MFG_t}, \eqref{eq:mu_beta}, \eqref{eq:K}, and that $L^\Upsilon_{yz}$ is a martingale, we get (with more detailed explanations to follow),
\begin{align}\label{eq:m_v_expansion}
    \E_\pi [m(T)\cdot v(T)-m(0)\cdot v(0)]
    &=\sum_{i=1}^3 \Psi_i(T) + \calO(n^{-1}), 
\end{align}
where the $\calO(n^{-1})$ term comes from \eqref{eqn:6}, where:
\begin{align*}
    &\Psi_1 (T) := \E_\pi \int_0^T \sum_{x\in [d]} m_x(t) \big(\vr - H(x,\Delta_x U_0 (\cdot, \mu^{\Ups}_t)) - F(x,\mu^\Ups_t) -\check \vr + H(x,\Delta_x \check u(t)) + F(x,\check \mu(t))\big)dt ,\\
    &\Psi_2(T) := \E_\pi \int_0^T \sum_{x\in [d]} v_x(t) \Big[\sum_{y,z\in [d]} \tfrac{(e_z - e_y)_x}{n} \big[(n-1) \mu^{\Ups, i}_{t,y} \gamma^*_z(y,\Delta_y U_0(\cdot, \mu^{\Ups, -y}_t)) + \1_{\{X^{\Ups, i}_t = y\}}\beta_{yz}(t,\bX^\Ups_t) \big]\\
    &\qquad\qquad\qquad\qquad\qquad\qquad -\sum_{y\in [d]} \check \mu_y(t) \check \gamma_{yx}(t)\Big]dt,\\
    &\Psi_3 (T) := \E_\pi \int_0^T \sum_{x\in [d]} \sum_{y,z \in [d]} ( U_0(x,\mu^\Ups_{t^-} + \tfrac{e_z - e_y}{n}) - U_0(x,\mu^\Ups_{t^-}) )\tfrac{(e_z - e_y)_x}{n} dK^\Ups_{yz} (t) \\
    &\qquad\quad = \E_\pi \int_0^T \sum_{x\in [d]} \sum_{y,z \in [d]} \Big\{( U_0(x,\mu^\Ups_t + \tfrac{e_z - e_y}{n}) - U_0(x,\mu^\Ups_t) )\tfrac{(e_z - e_y)_x}{n}  \\
    &\qquad\qquad\qquad\qquad\qquad\qquad \times \big[ (n-1) \mu^{\Ups,i}_{t,y} \gamma^*_z(y,\Delta_y U_0(\cdot, \mu^{\Ups, -y}_t)) + \1_{\{X^{\Ups, i}_t = y\}} \beta_{yz}(t,\bX^\Ups_t)   \big] \Big\}dt.
\end{align*} We note that we also used \eqref{erg_MFG_t} to substitute in for $\tfrac{d}{dt}\check u(t)$ and $\tfrac{d}{dt}\check \mu(t)$ in $\Psi_1$ and $\Psi_2$, respectively. Moreover, to derive the first expression for $\Psi_3$ we used the facts that the quadratic variation of a compensated random Poisson measure is the non-compensated measure and that $dK_{xy}^\Ups(t) dK_{wz}^\Ups (t) = 1$ if and only if $(x,y)=(w,z)$ and otherwise $dK_{xy}^\Ups(t) dK_{wz}^\Ups (t) = 0$. We can use the Lipschitz continuity of $U_0$ and the boundedness of $\gamma^*$ and $\beta$ to get that:
\begin{equation}\label{eqn:psi3}
    |\Psi_3(T)| \leq CTn^{-1}. 
\end{equation} We can use $\sum_xm_x(t)=0$ in the integrand of $\Psi_1(T)$ and then the monotonicity of $F$ to get:
\begin{align}\label{eqn:psi1}
\begin{split}
    \Psi_1(T) &= \E_\pi \int_0^T \sum_{x\in [d]} m_x(t) \big(H(x,\Delta_x \check u(t)) - H(x,\Delta_x U_0 (\cdot, \mu^{\Ups}_t)) + F(x,\check\mu(t)) - F(x,\mu^\Ups_t) \big)dt  \\ 
    &\leq \E_\pi \int_0^T \sum_{x\in [d]} m_x(t) \big(H(x,\Delta_x \check u(t)) - H(x,\Delta_x U_0 (\cdot, \mu^{\Ups}_t)) \big)dt.  
\end{split}
\end{align} For $\Psi_2(T)$, we will deal with the term on the first line for the moment, and return to the full expression after. We can use the definition of $(e_z-e_y)_x$ first to separate into two sums, the fact that $\sum_{y\in[d]}\gamma^*_y(x,\cdot)=0$, recombine the sums into one, the Lipschitz continuity of $\gamma^*$ and $U_0$ as well as the fact that $((n-1)\mu^{\Ups, i}_{t,y} + \delta_{X^{\Ups, i}_t}(y))/n = \mu^{\Ups}_{t,y}$ to aggregate more terms of order $\calO(n^{-1})$, and get that it equals:
\begin{align*}
    \begin{split}
        &\E_\pi \int_0^T \sum_{x\in [d]} v_x(t) \Big[ \sum_{y, y\neq x} \frac{1}{n} \big[(n-1) \mu^{\Ups,i}_{t,y} \gamma^*_x(y,\Delta_y U_0(\cdot, \mu_t^{\Ups, -y}))  + \1_{\{X^{\Ups, i}_t = y\}}\beta_{yx}(t,\bX^\Ups_t) \big]  \Big] dt \\
            &\quad + \E_\pi \int_0^T \sum_{x\in [d]} v_x(t) \Big[ \sum_{z, z\neq x} \frac{-1}{n} \big[(n-1) \mu^{\Ups,i}_{t,x} \gamma^*_z(x,\Delta_x U_0(\cdot, \mu_t^{\Ups, -x})) + \1_{\{X^{\Ups, i}_t = x\}}\beta_{xz}(t,\bX^\Ups_t) \big]  \Big] dt \\
        &= \E_\pi \int_0^T \sum_{x\in [d]} v_x(t) \Big[ \sum_{y, y\neq x} \frac{1}{n} \big[(n-1) \mu^{\Ups,i}_{t,y} \gamma^*_x(y,\Delta_y U_0(\cdot, \mu_t^{\Ups, -y}))  + \1_{\{X^{\Ups, i}_t = y\}}\beta_{yx}(t,\bX^\Ups_t) \big]  \Big] dt \\
            &\quad + \E_\pi \int_0^T \sum_{x\in [d]} v_x(t) \Big[ \tfrac{(n-1)\mu^{\Ups,i}_{t,x}}{n} \big[ \gamma^*_x(x,\Delta_x U_0(\cdot, \mu_t^{\Ups, -x})) + \tfrac{1}{n-1}\1_{\{X^{\Ups, i}_t = x\}} \beta_{xx}(t,\bX^\Ups_t) \big]  \Big] dt \\
        &= \E_\pi \int_0^T \sum_{x\in [d]} v_x(t) \Big[ \sum_{y \in [d]} \tfrac{n-1}{n} \big[  \mu^{\Ups,i}_{t,y} \gamma^*_x(y,\Delta_y U_0(\cdot, \mu_t^{\Ups, -y}))  + \calO(n^{-1}) \big]  \Big] dt \\
        &= \E_\pi \int_0^T \sum_{x\in [d]} v_x(t) \Big[ \sum_{y \in [d]}  \big[  \mu^{\Ups}_{t,y} \gamma^*_x(y,\Delta_y U_0(\cdot, \mu_t^{\Ups}))  + \calO(n^{-1}) \big]  \Big] dt \\
    \end{split}
\end{align*} Therefore, returning to the definition of $\Psi_2$, we have:
\begin{align}\label{eqn:psi2}
    \Psi_2(T) &= \E_\pi \int_0^T \sum_{x\in [d]} v_x(t) \Big[ \sum_{y \in [d]}  \big[  \mu^{\Ups}_{t,y} \gamma^*_x(y,\Delta_y U_0(\cdot, \mu_t^{\Ups})) - \check\mu_y(t) \check\gamma_{yx}(t)  + \calO(n^{-1}) \big]  \Big] dt .
\end{align} Using \eqref{eqn:psi1}, \eqref{eqn:psi2}, \eqref{eqn:psi3} in \eqref{eq:m_v_expansion}:

\begin{align}\label{eqn:new_form}
\begin{split}
    \EE_\pi [m(T)\cdot v(T) - m(0)\cdot v(0)] 
    &\leq \hat \EE_\pi \int_0^T \Big\{\sum_{x\in [d]} [H(x,\Delta_x \check u(t)) - H(x,\Delta_x U_0 (\cdot,\mu^\Ups_t))]m_x(t) \\
    &\qquad + v_x(t) \Big[\sum_{y\in [d]} \mu^{\Ups}_{t,y} \gamma^*_x(y,\Delta_y U_0(\cdot, \mu_t^{\Ups})) - \check \mu_y(t) \check\gamma_{yx} (t) \Big]\Big\}dt \\
    &\qquad + T\calO(n^{-1}) .
\end{split}
\end{align} Next, utilizing the identity:
$$\sum_{x\in [d]} \check \gamma_{yx}(t)=\sum_{x\in [d]} \gamma^*_x(y,\Delta_y  U_0(\cdot,\mu^\Ups_t))=0,$$ 
we get that:
\begin{align}\label{eqn:expansion_H}
\begin{split}
    &\sum_{x\in[d]} \Big[\left\{H(x,\Delta_x \check u(t))- H(x,\Delta_x U_0(\cdot,\mu^\Ups_t)) \right\} m_x(t)\\
    &\qquad\qquad\qquad+ v_x(t) \sum_{y\in [d]}\{ \mu^{\Ups}_{t,y} \gamma^*_x(y,\Delta_y U_0(\cdot, \mu_t^{\Ups})) - \check\mu_y(t) \check\gamma_{yx}(t) \} \Big]\\
    &\quad=
    \sum_{x\in[d]}\Big[
    \left\{ H(x,\Delta_x \check u(t))- H(x,\Delta_x U_0(\cdot,\mu^\Ups_t))+\Delta_x v(t) \cdot \gamma^*(x,\Delta_x U_0(\cdot,\mu^\Ups_t)))\right\} \mu^{\Ups}_{t,x} \Big]\\
    &\quad\qquad\;\;\,
    + 
    \sum_{x\in[d]}
    \Big[\left\{ H(x,\Delta_x U_0(\cdot,\mu^\Ups_t))- H(x,\Delta_x \check u(t)) - \Delta_x v(t) \cdot\gamma^*(x,\Delta_x \check u(t)))\right\} \check \mu_x(t) \Big].
    \end{split}
\end{align} 
Using \eqref{gamma_H} and assumption ($B_2$), we obtain the inequalities:
\begin{align}\notag
H(x,\Delta_x \check u(t))- H(x,\Delta_x U_0(\cdot,\mu^\Ups_t)) + \Delta_x v(t) \cdot \gamma^*(x,\Delta_x U_0(\cdot,\mu^\Ups_t)))
 &\leq -C_{2,H} |\Delta_x v(t)|^2,
\\\notag
H(x,\Delta_x  U_0(\cdot, \mu^\Ups_t))- H(x,\Delta_x \check u(t)) - \Delta_x v(t) \cdot\gamma^*(x,\Delta_x \check u(t)))
 &\leq -C_{2,H} |\Delta_x v(t)|^2.
\end{align} 
The last two bounds applied to the expansion in \eqref{eqn:expansion_H} imply:
\begin{align}\label{eqn:bound_H}
\begin{split}
    &\sum_{x\in[d]} \Big[\left\{H(x,\Delta_x \check u(t))- H(x,\Delta_x U_0(\cdot,\mu^\Ups_t)) \right\} m_x(t)\\
    &\qquad\qquad\qquad + v_x(t) \sum_{y\in [d]}\{ \mu^{\Ups}_{t,y} \gamma^*_x(y,\Delta_y U_0(\cdot, \mu_t^{\Ups})) - \check\mu_y(t) \check\gamma_{yx}(t) \} \Big]\\
    &\qquad\qquad\qquad\qquad\qquad\qquad\qquad\qquad \leq -C_{2,H}
    \sum_{x\in[d]} (\mu_{t,x}^\Ups + \check\mu_x(t)) |\Delta_x v (t)|^2.
    \end{split}
\end{align} Plugging in \eqref{eqn:bound_H} into \eqref{eqn:new_form}, moving the negated terms to the left-hand side,
\begin{align*}
    \begin{split}
    C_{2,H} \EE_\pi \int_0^T \sum_{x\in [d]} (\mu_{t,x}^\Ups + \check\mu_x(t)) |\Delta_x v (t)|^2 dt &\leq T \calO(n^{-1}) + \E_\pi [m(0) \cdot v(0) - m(T) \cdot v(T)].
    \end{split}
\end{align*} Dividing the prior display by $T$, taking $\limsup_{T\to\infty}$, and using the boundedness of $m$ and $v$ finishes the proof. $\square$

\section{Proofs of \Cref{prop:erg_stability} and \Cref{thm:large_deviations}}\label{sec:ld_proofs}

We finish the main theoretical portion of the paper by proving the results pertaining to the large deviations of the empirical measures.

\subsection{Proof of \Cref{prop:erg_stability}}
    Consider a globally asymptotically stable equilibrium for \eqref{erg_FP_t}, call it $\overset{\circ}{\mu}$. Define $\overset{\circ}{u} := (U_0 (x,\overset{\circ}{\mu}))_{x\in [d]}$ 
    and note that since $\overset{\circ}{\mu}$ is globally asymptotically stable, we have by taking a limit of \eqref{erg_MFG_t} as $t\to\infty$ that:
    \[
    \bar\vr = H(x,\Delta_x \overset{\circ}{u}) + F(x,\overset{\circ}{\mu}).
    \] Hence, $(\bar\vr, \overset{\circ}{u}, \overset{\circ}{\mu})$ solves \eqref{erg_MFG}. Yet by \Cref{prop:erg_MFG_red_full}, \eqref{erg_MFG} has a unique solution, where recall that $\bar u$ is unique up to an additive constant vector $k\vec{1}$, and therefore for all $x\in [d]$, $\Delta_x \overset{\circ}{u} = \Delta_x \bar u$ with $\overset{\circ}{\mu} = \bar\mu$ in the first place. 

    By definition, $\Delta_x u_0(t,\cdot) = \Delta_x U_0(\cdot, \mu_0(t))$. And since $U_0$ is Lipschitz by \Cref{prop:me_properties}, there exists $C>0$, independent of $t$, such that:
    \[
    |\Delta_x U_0 (\cdot, \mu_0(t))| \leq C (|\Delta_x \bar u| + |\mu_0(t) - \bar\mu|),
    \] which means $|\Delta_x u_0(t,\cdot)|$ is bounded. Therefore, we may re-use the proof from \cite[Theorem~3]{GomesMohrSouza_discrete} to obtain that $\mu_0(t) \to \bar\mu$ as $t\to\iy$.\footnote{It is worth mentioning that this theorem relies on the additional contraction assumptions of that work. However, their proof only uses the contractive assumptions to bound their potential functions $u_0(t)$ in a sub-additive (non-norm) function $\|\cdot\|_\sharp$. Here, $\|u_0(t)\|_\sharp$ measures the maximum difference between potentials of any two states from $[d]$ at time $t\in\R_+$ and so is equivalent to $|\Delta u_0(t)|$ in our case. Namely, for any measurable function $b:\R_+ \to \R^d$, the quantity $\|b(t)\|_\sharp$ is bounded for all $t\in\R_+$ if and only if $|\Delta b(t)|$ is bounded for all $t\in\R_+$. } Recalling that the initial condition $\eta$ was arbitrary, we have that $\bar\mu$ is a globally asymptotically stable equilibrium. $\square$

\subsection{Proof of \Cref{thm:large_deviations}}
    The proof is an application of theorems from \cite{MR3354770}, so we verify the assumptions of the theorems. Namely, the possible transitions on $[d]$ form a complete, finite graph and a fortiori such a graph is irreducible. In either case for $\kappa^n$, the rate mapping is Lipschitz because $\gamma^*$ is Lipschitz by assumption, and $U_0$ is Lipschitz, by \Cref{prop:me_properties}. Moreover, the rates are assumed to be bounded below by $\mathfrak{a}_l>0$. \Cref{prop:erg_stability} verifies the final requirement for each of the following theorems. So, \cite[Theorem~2.1]{MR3354770} proves the large deviation result for $\scrL^n_\iy$; \cite[Theorem~3.2]{MR3354770} for $\scrL^n_{[0,T]}$; and \cite[Theorem~3.3]{MR3354770} for $\scrL^n_T$. As a small comment, the set of paths over which the infimum in the good rate function $s(\cdot)$ is taken is nonempty due to \cite[Lemma~4.2]{MR3354770}. $\square$

\section{Numerics Comparing ($C/\sqrt{n}$)-Nash Equilibria}\label{sec:numerics}

By \Cref{cor:stationary_eq} and \Cref{thm:epsilon_equilibrium_2}, we saw that there are, under the full assumptions, at least two distinct types of ($C/\sqrt{n}$)-Nash equilibria: one driven by the master equation's solution, $\Gamma_0$, and one driven by the solution to the stationary ergodic MFG, $\bar\Gamma$. One might ask whether, qualitatively speaking, players adhering to the approximate Nash equilibria profiles $\bar\Gamma$ or $\Gamma_0$ experience different costs, or player behaviors. In \Cref{sec:ergodic_example}, we solve a class of quadratic examples for the stationary ergodic MFG system \eqref{erg_MFG}. This allows us to explicitly state the strategies for the stationary Markovian strategy profile $\bar\Gamma$ in our numerics. To approximate the strategies making up $\Gamma_0$ however, no such explicit formula is available due to the dependence on $U_0$. To tackle this, the following \Cref{sec:erg_me} adapts the deep Galerkin method to approximate the solution to \eqref{ME}. We then begin our comparison of the strategies forming $\bar\Gamma$ and $\Gamma_0$, compare their respective stationary distributions for each empirical measure, and contrast the realized costs under each regime.

\subsection{Explicit solutions to the stationary ergodic MFG}\label{sec:ergodic_example}

The example of quadratic costs is ubiquitous in MFG literature and will serve as our benchmark problem for the numerics that follow this section. Here, we will explicitly solve the example problem in the finite-state ergodic case. This particular example was studied for the finite-horizon, finite-state case in \cite[Example~1]{cec-pel2019}, \cite[Example~3.1]{bay-coh2019}, and \cite[Example~7.1]{CLZ2024}.

Fix a parameter $\delta \geq 0$ that will serve as a bias to avoid state $1\in [d]$ and fix a scaling for the rate costs $b>0$. We will consider an example with the parameters:
\begin{align*}
    &\A = [1,3], \qquad d = 2, \qquad f(x,a) = \delta \1_{\{x=1\}} +  b\sum_{y\neq x} (a_{xy} - 2)^2. 
\end{align*} Pausing to consider the form of $f$, we note that we have introduced two parameters $\delta$ and $b$ whose interplay will determine the equilibrium state of the game. The larger $\delta$ is, the more players will be incentivized to move quickly away from state $1$ to avoid the additional cost of remaining in that state. Meanwhile, increasing $b$ means that players who want to move faster or slower than the ``free" rate of $2$, to avoid state $1$ for instance, will pay a higher cost. \blue{For $p_y \in [-2b,2b], y\neq x$, we can simply express the Hamiltonian $H$ and its minimizer $\gamma^*$ as: 
\begin{align*}
        &H(x,p) = \delta\1_{\{x=1\}} + \sum_{y\neq x} \Big(2p_y - \frac{p_y^2}{4b}  \Big), \qquad     \gamma^*_y (x,p) = 2 - \frac{p_y}{2b}. 
\end{align*} 
Otherwise, we get that the minimizer $\gamma^*_y(x,p)$ obtains its value at one of the endpoints of the interval $[1,3]$. And so whenever $|\bar u_2 - \bar u_1| \leq 2b$,
\begin{align*}
    H(x,\Delta_x \bar u)= \delta\1_{\{x=1\}} + \sum_{y\neq x} \Big(2(\bar u_y - \bar u_x) - \frac{(\bar u_y - \bar u_x)^2}{4b}  \Big), \qquad \gamma^*_y(x,\Delta_x \bar u) = 2 + \frac{\bar u_x - \bar u_y}{2b}.
\end{align*}} 
To accommodate the full assumptions, we will consider a typical Lasry--Lions monotone mean field cost:
\[
F(x,\eta) = \eta_x.
\] \blue{For the moment, consider the case that $|\bar u_2 - \bar u_1| \leq 2b$. Specifying $b$ and $\delta$, we then compute a solution and by \Cref{prop:erg_MFG_red_full} know that this solution is unique.} With this setup, \eqref{erg_MFG} reads:
\begin{align}\label{eqn:stationary_example}
\begin{split}
    &\bar\vr = 2 (\bar u_2 - \bar u_1) - \frac{1}{4b}  (\bar u_2 - \bar u_1)^2 + \bar \mu_1 + \delta, \\
    &\bar\vr = 2 (\bar u_1 - \bar u_2) - \frac{1}{4b}  (\bar u_2 - \bar u_1)^2 + \bar \mu_2, \\
    &\bar\mu_1 (-\phi_1) + \bar\mu_2 \phi_2 = 0,\\
    &\bar\mu_1 \phi_1 + \bar\mu_2 (-\phi_2) = 0,\\
\end{split}
\end{align} where:
\begin{align}\label{eqn:phi_defs}
    \begin{split}
        \phi_1 := 2 + \frac{\bar u_1 - \bar u_2}{2b}, \quad \phi_2 := 2 + \frac{\bar u_2 - \bar u_1}{2b}.\\
    \end{split}
\end{align} We solve this system of equations explicitly. Using the fact that $\bar\mu_1 = 1-\bar\mu_2$ in the measure equations of \eqref{eqn:stationary_example}, we can show that:
\begin{align*}
    -\phi_1 + \bar\mu_2 (\phi_1 + \phi_2) =0 \implies \bar\mu_2 = \phi_1/4,
\end{align*} where the implication follows from \eqref{eqn:phi_defs}. For notational simplicity, define:
\[
A:= \bar u_2 - \bar u_1.
\] We can write the value equations from \eqref{eqn:stationary_example} as:
\begin{align*}
    &\vr = 2A -\frac{1}{4b}A^2 + \bar\mu_1 +\delta \\
    &\vr = -2A -\frac{1}{4b}A^2 + \bar\mu_2.
\end{align*} Subtracting the equations in the prior display, 
\[
0 = 4A + \bar\mu_1 - \bar\mu_2 + \delta.
\] Since $\bar\mu_2 = \phi_1/4$ and $\bar\mu_1 = 1-\bar\mu_2$, 
\[
4A + \frac{A}{4b} +\delta = 0,
\] and so rearranging gives:
\[
A = \delta \frac{4b}{16b+1}. 
\] And since we can write $\bar\mu_1$ and $\bar\mu_2$ in terms of $A$, we plug in the prior display to get:
\begin{align*}
    \bar\mu_1 = \frac{1}{2} \Big(1 - \frac{\delta}{16b+1} \Big), \qquad \bar\mu_2 = \frac{1}{2} \Big(1 + \frac{\delta}{16b +1} \Big).
\end{align*} With this, we can return to the second value equation of \eqref{eqn:stationary_example} to get an expression for $\vr$ in terms of $\delta$ and $b$,
\[
\vr = \frac{1}{2} + \delta \frac{128 b^2  +24b - 4b\delta + 1}{(16b+1)^2} .
\] Suppose $b=4$, and $\delta =0$ for instance. Then the solution reads:
\begin{align*}
    &\bar\vr = 1/2, \quad \bar u_2 - \bar u_1 = 0, \quad \bar\mu = (1/2, 1/2).
\end{align*} When we take $\delta>0$ we will no longer have symmetry in the stationary measure and in the potential function. For example, take $\delta = 1$ and $b=4$ to get:
\begin{align*}
    \bar\vr = 8483/8450 \approx 1.003,\quad \bar u_2 - \bar u_1 = 16/65,\quad \bar\mu = (32/65, 33/65).
\end{align*} For the remainder of the paper, we will study the case when $b=4$ and $\delta=0$. \blue{In either case, $|\bar u_2 - \bar u_1| \leq 2b$.}

\subsection{Approximate solutions to the ergodic master equation}\label{sec:erg_me}

In this section, we use the DGM to solve the ergodic master equation and in order to numerically compare the strategies. The main difference we find is that the master equation-derived strategy is sensitive to the players' empirical distribution in the simplex and that this effect is most pronounced at the boundary. In a sense, this means that the master equation-derived strategy has a better short-term response than the stationary strategy. For the master equation's corresponding empirical $n$-player measure then, we would expect a faster transition away from the boundary of the simplex compared to the measure of players using the stationary strategy.

We solve the master equation corresponding to this data for $U_0$ using the DGM. While the solution of the master equation is a pair $(\vr, U_0)$, we note that since $\vr = \bar\vr$ by \Cref{prop:me_properties}, we have already found that $\vr=1/2$ in \Cref{sec:ergodic_example}. In \Cref{alg:ergodic_DGM}, we present an algorithm for approximating the potential function $U_0$ of the ergodic master equation, given the game's value $\vr$.

\Cref{alg:ergodic_DGM} is, in a sense, an extension of the DGM method for the stationary ergodic MFG, presented in \cite[Section~4]{MR4264647} for the MFG on the torus. Setting aside the different state-spaces, we note that \cite{MR4264647} approximates the stationary ergodic MFG system while \Cref{alg:ergodic_DGM} approximates the ergodic master equation. Moreover, our \Cref{alg:ergodic_DGM} proceeds with $\vr$ as a known value which we solved for a priori. We showed in \Cref{sec:ergodic_example} an explicit expression for $\vr$ in the case of quadratic (and potentially asymmetric) costs. In general, to compute $\vr$, one would first solve \eqref{erg_MFG} by using \cite{MR4264647} and then using the value for $\vr$ determined, solve for $U_0$ using \Cref{alg:ergodic_DGM}.  

As in the DGM, \Cref{alg:ergodic_DGM} uses a family of parameterized functions $\calU(\cdot, \cdot ; \theta) : [d]\times\calP([d])\to\R$ to approximate $U_0$, where $\theta$ is a vector of the parameters in the neural network $\calU$. For any run of the algorithm, we fix the architecture of $\calU$ beforehand; here we use a fully-connected, feed-forward architecture like in \cite{CLZ2024}, while \cite{MR3874585} used an LSTM architecture. Regardless of the architecture choice, we need to consider a class of neural networks that is rich enough to be able to approximate $U_0$. In our case, we used a smoothed ReLU activation at each layer, ELU. We train the parameter $\theta$ using stochastic gradient descent so that $\calU(\cdot, \cdot;\theta)$ fits \eqref{ME} as well as possible.

\begin{algorithm}
\caption{Ergodic DGM}\label{alg:ergodic_DGM}
\begin{algorithmic}[1]
\State {\bf Input:} 
An initial parameter vector $\theta$. 
\State {\bf Output:} A trained vector $\hat \theta$ such that $(\vr, \calU(\cdot, \cdot; \hat\theta))$ approximately solves \eqref{ME}.

\State Compute:
\[
\hat\theta \in \argmin_{\theta \in \R^{\bar\delta}} L(\theta)
\]  where for a finite set of samples $\calS := \{S \mid S = (x,\eta)\}$, we define $L$ to be the mean squared error:
\begin{align*}
\begin{split}
    L(\theta) := \frac{1}{|\calS|} \sum_{(x,\eta) \in \calS} &\Big\{ \Big|H(x,\Delta_x \calU(\cdot, \eta ;\theta)) + \eta_x + \sum_{y,z \in [d]} \eta_y D^\eta_{yz} \calU(x,\eta ; \theta) \gamma^*_z(y,\Delta_y \calU(\cdot, \eta ; \theta)) - \vr \Big|^2 \\
    &\qquad + \Big| \sum_{y\in [d]} \calU(y,\eta ; \theta) \Big|^2 \Big\}. 
\end{split}
\end{align*}
\end{algorithmic}
\end{algorithm}

\begin{remark}
    The first term in the loss function of \Cref{alg:ergodic_DGM} is the master equation evaluated at uniformly sampled points of $[d]\times \calP([d])$. The second term, however, centralizes the potential function around $0$. As noted in \Cref{prop:me_properties}, $U_0$ is only unique up to an additive constant. Therefore, adding the centralization term encourages the algorithm to reach the same solution on different runs.
\end{remark}

\begin{remark}
    Unfortunately, the master equation is a different kind of PDE than those studied in \cite{MR3874585}, and so the theoretical guarantees of the algorithm's correctness do not apply. While we do not offer a proof of correctness, applying the DGM to the finite-horizon master equation was pioneered in \cite[Algorithm~7]{MR4368188} and proofs of algorithmic correctness for that case were offered in \cite{CLZ2024}. 
\end{remark}

In Figure \ref{fig:rate_diff_2}, we present the difference in strategies, where the strategy $\gamma^*_y(x,\Delta_x U_0(\cdot, \eta))$ is approximated by plugging in the result of the ergodic DGM \Cref{alg:ergodic_DGM}; namely, $\gamma^*_y(x,\Delta_x \calU (\cdot, \eta; \hat\theta))$. In the case we study numerically in \Cref{sec:ergodic_example}, for any $D\in\R^d$, $\gamma_y^*(x,\Delta_x D)$ is a linear transformation of the vector $\Delta_x D$. We note that $\Gamma_0$ has a greater effect near the boundary of the simplex; this has the effect of moving a representative player faster toward the center.

\begin{figure}
\begin{tabular}{cc}
\includegraphics[width=75mm]{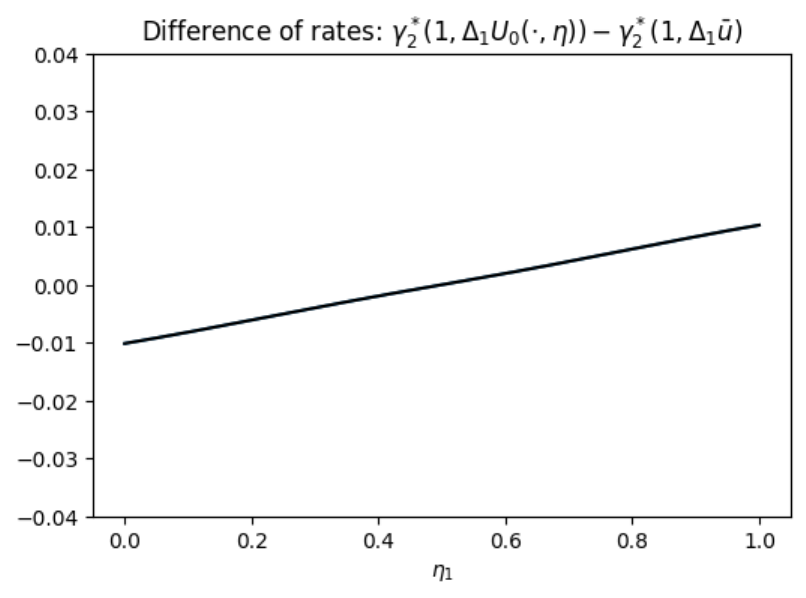}  & \includegraphics[width=75mm]{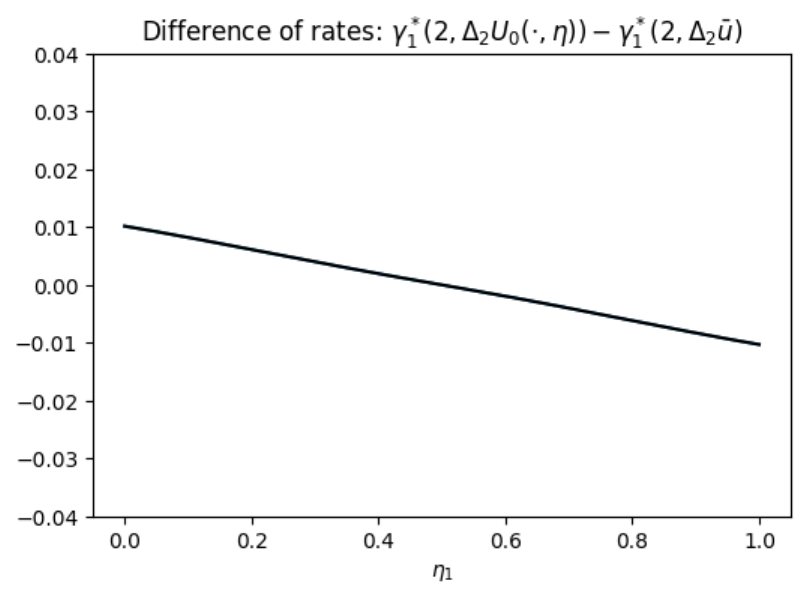} \\
\end{tabular}
\caption{We compare $\Gamma_0$ and $\bar\Gamma$ by the difference in rates $\gamma^*_y(x,\Delta_x U_0(\cdot,\eta)) - \gamma^*_y(x,\Delta_x \bar u)$ in the case $d=2$.}
\label{fig:rate_diff_2}
\end{figure}

We perform the same analysis for the case $d=3$ and observe the differences in the strategies in Figure \ref{fig:rate_diff_3}. The vertical axis represents the proportion of players in state $1$ and the horizontal the proportion in state $2$. The proportion of players in state $3$ is $1$ in the lower, left corner of the plots. Like in Figure \ref{fig:rate_diff_2}, we note that $\Gamma_0$ results in a greater rate of transition when more players are in one's own state.

\begin{figure}
\begin{tabular}{ccc}
\includegraphics[width=50mm]{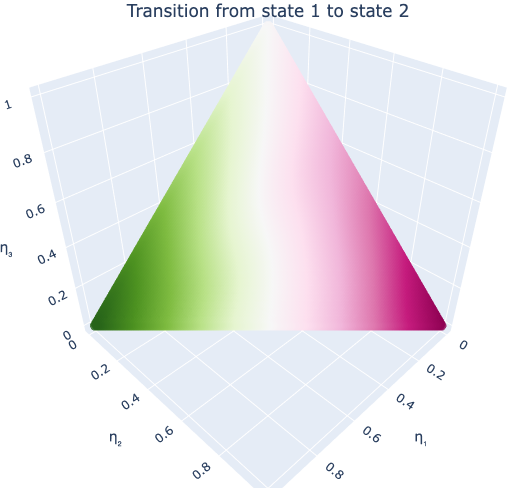}  & \includegraphics[width=50mm]{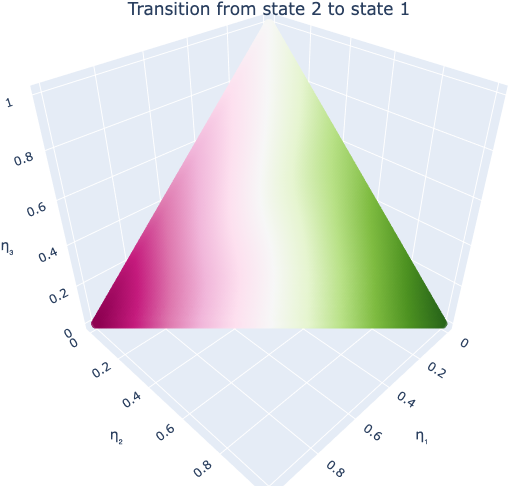} &
\includegraphics[width=50mm]{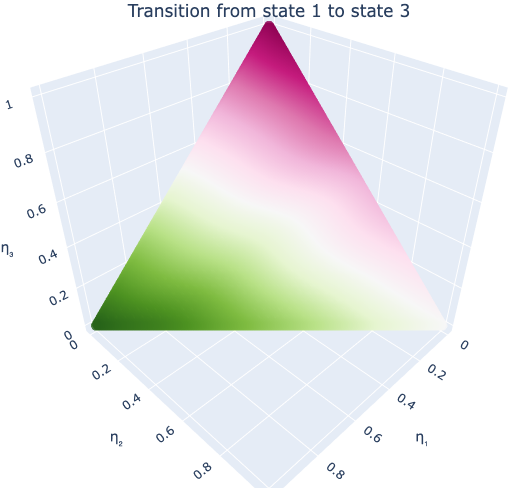} \\
\includegraphics[width=50mm]{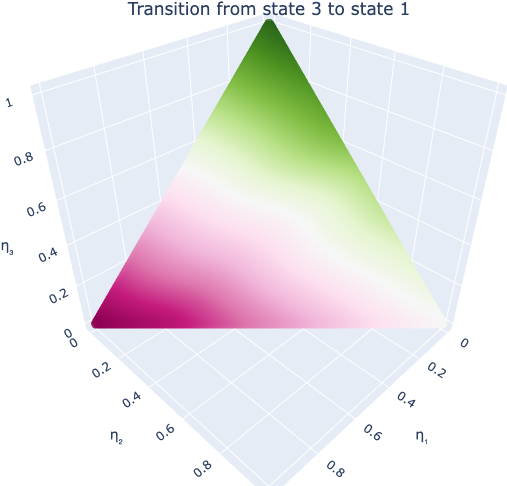} &
\includegraphics[width=50mm]{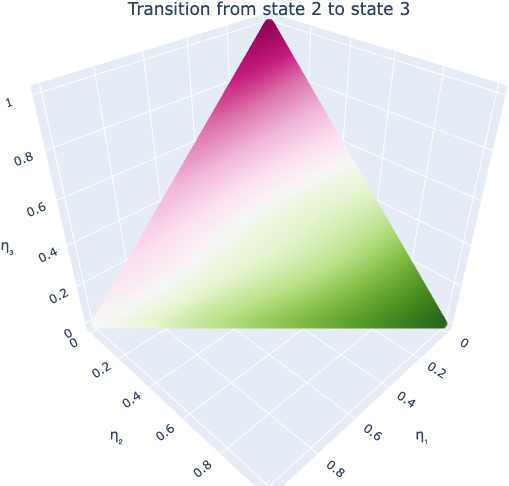} & 
\includegraphics[width=50mm]{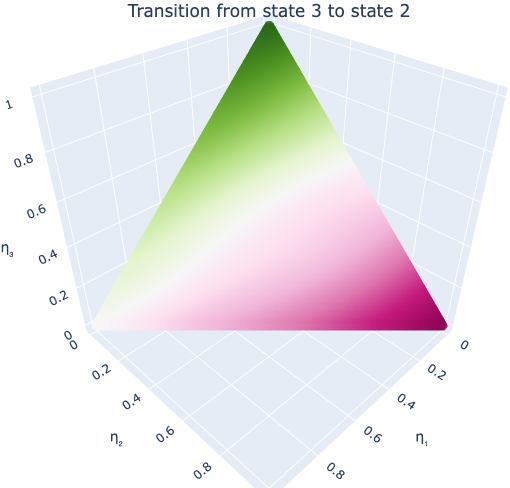} \\
\end{tabular}
\begin{tabular}{c}
\includegraphics[width=150mm]{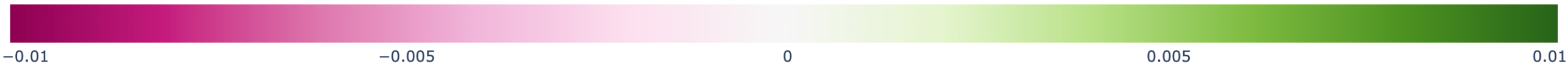}
\end{tabular}
\caption{We compare $\Gamma_0$ and $\bar\Gamma$ by the difference in rates $\gamma^*_y(x,\Delta_x U_0(\cdot,\eta)) - \gamma^*_y(x,\Delta_x \bar u)$ in the case $d=3$.}
\label{fig:rate_diff_3}
\end{figure}

In Figure \ref{fig:concentration_diff}, we note that the difference in rates leads to a greater concentration of the stationary distribution $\scrL_\iy^n$ around the center of the simplex $\calP([2])$ for $\Gamma_0$ than for $\bar\Gamma$. We computed the stationary distribution according to the formula for a birth-death process. Specifically, \cite[Part~III, Chapter~14]{MR2906848} lists the formula for the stationary distribution of a birth-death process with specified birth and death rates for every state; we then consider ``births" as when the empirical distribution increases its proportion in state $1$ and ``deaths" when it increases its proportion in state $2$.

\begin{figure}
\begin{tabular}{cc}
\includegraphics[width=78mm]{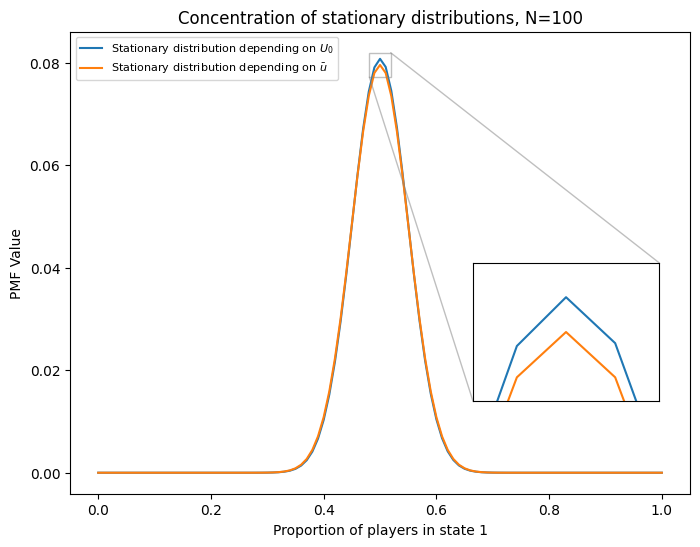}  & \includegraphics[width=78mm]{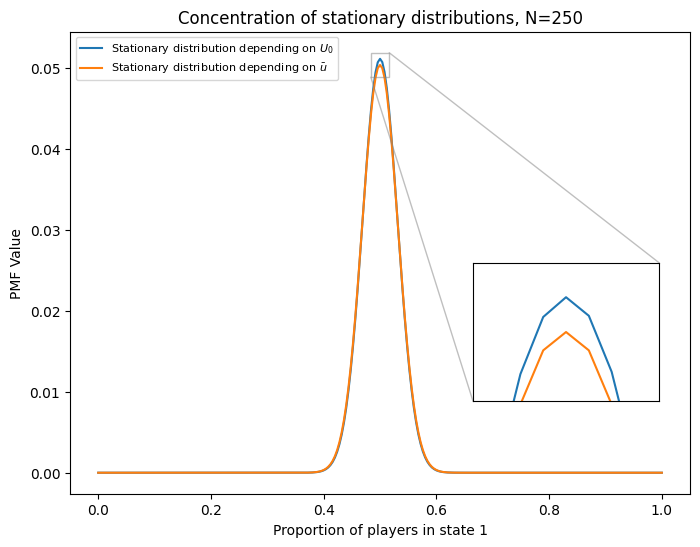} \\
\includegraphics[width=78mm]{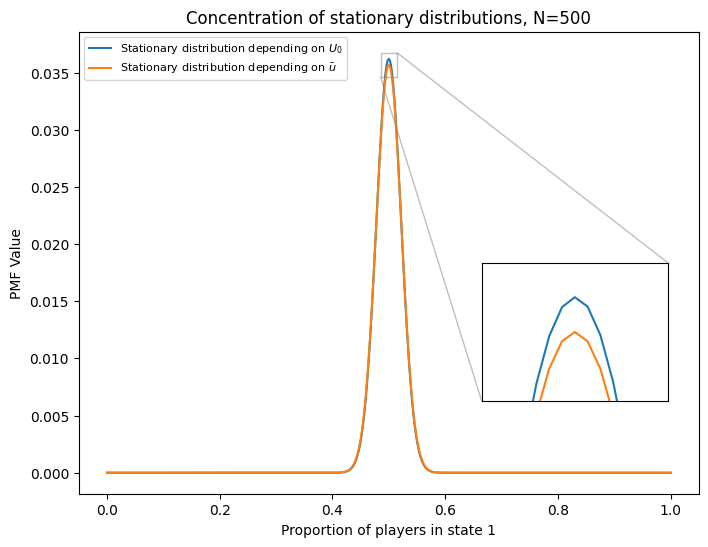} & 
\includegraphics[width=78mm]{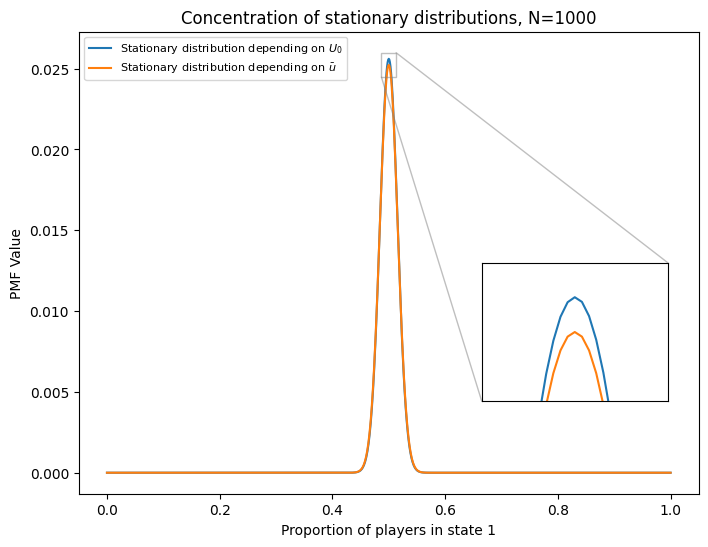} 
\end{tabular}
\caption{Due to the elevated rate for players of $\Gamma_0$ near the boundary of $\calP([2])$, we see a greater concentration of mass toward the center of the simplex, relative to $\bar\Gamma$. }
\label{fig:concentration_diff}
\end{figure}

Given the stationary distributions for $\mu^{\Gamma_0}$ and $\mu^{\bar\Gamma}$, we can compute the exact cost to a single player in each $\eps$-Nash equilibria. We write:
\[
\vr^{n,U_0} := J^n(\Gamma_0), \qquad \vr^{n, \bar u} := J^n(\bar\Gamma).
\] In Figure \ref{fig:real_cost}, we represent $\vr^{n, U_0}$ and $\vr^{n, \bar u}$ as $n$ increases. We see that $\vr^{n, U_0}$ is slightly smaller and so there is a marginal advantage to playing the profile $\Gamma_0$ associated with the master equation.

\begin{figure}
\begin{tabular}{cc}
\includegraphics[width=81mm]{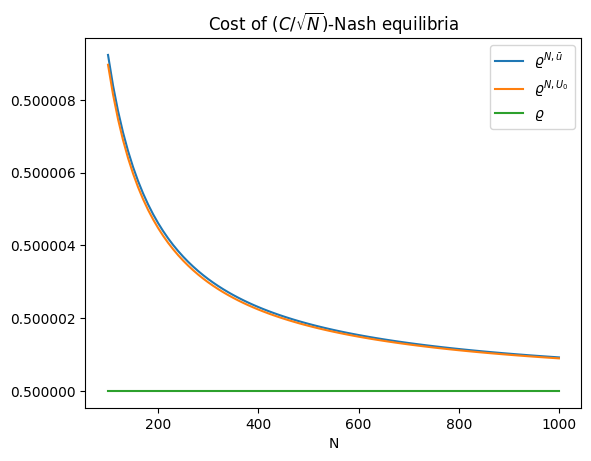}  & \includegraphics[width=81mm]{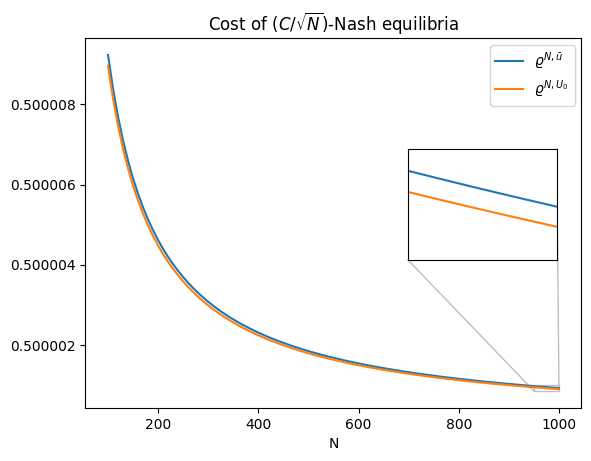} \\ 
\end{tabular}
\caption{The left graph shows the convergence of each realized cost to $\vr$ the optimal cost in the mean field limit. The right graph highlights the fact that $\vr^{n, U_0} < \vr^{n, \bar u}$.}
\label{fig:real_cost}
\end{figure}

\begin{figure}
    \centering
    \includegraphics[width=83mm]{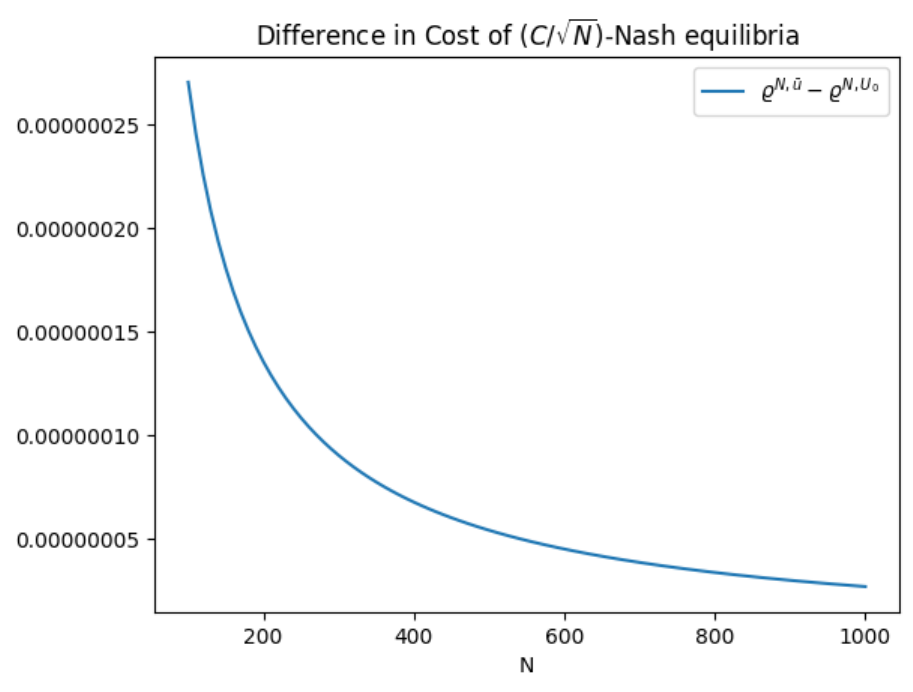}
    \caption{The difference in realized value $\vr^{n, \bar u} - \vr^{n, U_0}$.}
    \label{fig:rho_diff}
\end{figure}

\section{Large Deviations Numerics}\label{sec:ld_numerics}

In what follows, we provide an explicit formula for the good rate function of $\scrL^n_\iy$ when $d=2$. We do this by introducing natural choices for functions $r,\bar r$ that satisfy an asymptotic property sufficient for proving \Cref{thm:special_case}.


As noted briefly in \Cref{sec:numerics}, when $d=2$ we can interpret the empirical measures under $\bar\Gamma$ and $\Gamma_0$ as birth-death processes where movement into state $1$ is interpreted as a birth and movement into state $2$ is interpreted as a death. For $\bar\Gamma$ then, we can write the birth and death rates as the functions:
\begin{align}\label{eqn:bar_b_d}
    \begin{split}
        &\bar b^{(n)}(k/n) = \gamma^*_1(2,\Delta_2 \bar u) n (1-k/n), \\ 
        &\bar d^{(n)}(k/n) = \gamma^*_2(1,\Delta_1 \bar u) n (k/n). \\ 
    \end{split}
\end{align} Similarly, for $\Gamma_0$:
\begin{align}
    \begin{split}
        & b^{(n)}(k/n) = \gamma^*_1(2,\Delta_2 U_0(\cdot, (k/(n-1), 1-k/(n-1)))) n (1-k/n), \\ 
        & d^{(n)}(k/n) = \gamma^*_2(1,\Delta_1 U_0(\cdot, (k/(n-1), 1-k/(n-1)))) n (k/n). \\ 
    \end{split}
\end{align}

We now propose two functions $\bar r, r_0:[0,1]\to\R$ that, for the case $d=2$ only, will allow us to define a simpler form for the rate function than in \Cref{thm:large_deviations}. Set:
\begin{align}
    \begin{split}
        &\bar r(\lam) := \Big(\tfrac{\gamma^*_2(1,\Delta_1 \bar u)}{\gamma^*_1(2,\Delta_2 \bar u)} \Big) \frac{\lam}{1-\lam}, \\
        &r_0(\lam) := \Big(\tfrac{\gamma^*_2(1,\Delta_1  U_0(\cdot, (\lam, 1-\lam)))}{\gamma^*_1(2,\Delta_2 U_0 (\cdot, (\lam , 1-\lam)))} \Big) \frac{\lam}{1-\lam}. \\
    \end{split}
\end{align}

\begin{lemma}\label{lem:r_regular}
    The functions $\log(\bar r), \log(r_0) \in L^1([0,1])$; that is, the log of each function is absolutely integrable. Moreover, 
    \begin{align}\label{eqn:suff_lim}
    \lim_{n\to\iy}\frac{1}{n} \sum_{k=1}^{n-1} \Big(\log\Big(\frac{\bar d^{(n)} ((k+1)/n)}{\bar b^{(n)} (k/n))}\Big) - \log(\bar r(k/n) )\Big) = 0,
\end{align} and $r_0$ satisfies the same limit with $d^{(n)}$ and $b^{(n)}$. 
\end{lemma}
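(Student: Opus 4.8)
The plan is to dispatch the two assertions of \Cref{lem:r_regular} separately, handling the $\bar r$ and $r_0$ statements in parallel; only the $r_0$ case will need a small perturbation argument.

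First I would settle the integrability claim. The key point is that for every $x\neq y$ the optimal selector $\gamma^*_y(x,\cdot)$ takes values in $\A=[\fra_l,\fra_u]$, so both the constant $\gamma^*_2(1,\Delta_1\bar u)/\gamma^*_1(2,\Delta_2\bar u)$ and the function $\lam\mapsto \gamma^*_2(1,\Delta_1 U_0(\cdot,(\lam,1-\lam)))/\gamma^*_1(2,\Delta_2 U_0(\cdot,(\lam,1-\lam)))$ lie in the compact set $[\fra_l/\fra_u,\fra_u/\fra_l]\subset(0,\iy)$, and the latter is measurable since $U_0$ is Lipschitz (hence continuous) by \Cref{prop:me_properties} and $\gamma^*$ is Lipschitz in $p$ by ($A_2$). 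Thus $\log\bar r(\lam)$ and $\log r_0(\lam)$ each equal a bounded measurable function of $\lam$ plus $\log\lam-\log(1-\lam)$; since $\int_0^1|\log\lam|\,d\lam=\int_0^1|\log(1-\lam)|\,d\lam=1$, both logs lie in $L^1([0,1])$.

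Next I would verify the limit \eqref{eqn:suff_lim}. Abbreviating the constant rates $c_2:=\gamma^*_2(1,\Delta_1\bar u)$ and $c_1:=\gamma^*_1(2,\Delta_2\bar u)$, a direct computation gives $\bar d^{(n)}((k+1)/n)=c_2(k+1)$, $\bar b^{(n)}(k/n)=c_1(n-k)$, and $\bar r(k/n)=(c_2/c_1)\,k/(n-k)$, so the $k$-th summand collapses to $\log\frac{k+1}{k}$ and the sum telescopes to $\sum_{k=1}^{n-1}\log\frac{k+1}{k}=\log n$; dividing by $n$ yields $\frac{\log n}{n}\to 0$. For $r_0$ I would write the $k$-th summand as $\log\frac{k+1}{k}$ plus two increments of the form $\log\gamma^*_\bullet(\cdot,\Delta_\bullet U_0(\cdot,\eta))-\log\gamma^*_\bullet(\cdot,\Delta_\bullet U_0(\cdot,\eta'))$, where $\eta$ is an empirical measure of the form $(j/(n-1),1-j/(n-1))$ with $j\in\{k,k+1\}$ and $\eta'=(k/n,1-k/n)$. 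A one-line estimate gives $|\eta-\eta'|\le C/n$ uniformly in $1\le k\le n-1$, and then the Lipschitz continuity of $U_0$, of $\gamma^*$ in $p$, and of $\log$ on $[\fra_l,\fra_u]$ makes each increment $O(1/n)$; summing, the average is at most $\frac{\log n}{n}+(n-1)\cdot\frac1n\cdot O(1/n)\to 0$.

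The only delicate bookkeeping — and the place I expect a referee to scrutinize — is the mismatch between the $(n-1)$-normalization (and the shifted index) of the arguments of $U_0$ appearing in $d^{(n)}/b^{(n)}$ versus the $n$-normalization appearing in $r_0(k/n)$, together with the fact that for $k=n-1$ the shifted argument $n/(n-1)$ is not an element of $\calP([2])$ under the stated convention. Both are harmless: the normalization mismatch is precisely what the $O(1/n)$ Lipschitz estimate absorbs, and the lone boundary term contributes only $O(1/n)$ to the average because the $\gamma^*$-quotient stays in $[\fra_l/\fra_u,\fra_u/\fra_l]$ regardless (alternatively, one extends $U_0$ to a Lipschitz function on a neighborhood of $\calP([2])$). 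No genuinely hard step is involved; the lemma is elementary once the telescoping identity $\sum_{k=1}^{n-1}\log\frac{k+1}{k}=\log n$ is isolated.
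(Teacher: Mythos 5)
Your proposal is correct and follows essentially the same route as the paper: bound the $\gamma^*$-quotients away from $0$ and $\infty$ to reduce integrability to that of $\log(\lambda/(1-\lambda))$, and observe that the $\bar r$ summand collapses to $\log\frac{k+1}{k}$ and telescopes to $\log n$, so the Ces\`aro average vanishes. The only difference is that you spell out the $O(1/n)$ Lipschitz bookkeeping for $r_0$ (including the $(n-1)$ versus $n$ normalization mismatch and the $k=n-1$ boundary term), which the paper dismisses with a one-line remark that the $r_0$ case is ``similar'' using the Lipschitz continuity of $U_0$; your added care is harmless and, if anything, fills in details the paper omits.
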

\begin{proof}
    Recall that the rates are bounded in a compact set away from zero. Therefore, using that $\log(ab) = \log(a) + \log(b)$,  for any $r \in \{\bar r, r_0\}$, 
    \[
    \int_0^1 |\log r(\lam)| d\lam \leq C + \int_0^1 \Big|\log\Big(\tfrac{\lam}{1-\lam}\Big) \Big| d\lam = C + \log(4) < \iy.
    \] We check \eqref{eqn:suff_lim} for $r= \bar r$. We now combine the $\log$'s in \eqref{eqn:suff_lim}, use \eqref{eqn:bar_b_d}, and use that the product is telescoping to get:
    \begin{align}
        \lim_{n\to\iy} \frac{1}{n} \sum_{k=1}^{n-1} \log \Big( \frac{k+1}{n-k} \cdot \frac{n-k}{k} \Big) = \lim_{n\to\iy} \frac{1}{n} \log\Big(\prod_{k=1}^n(k+1)/k\Big) = \lim_{n\to\iy} \frac{\log(n+1)}{n} = 0. 
    \end{align} 
    The proof for $r_0$ is similar to that of $\bar r$, though we use at one point that $U_0$ is Lipschitz. 
\end{proof}

\begin{theorem}[Large Deviations, Special Case $d=2$]\label{thm:special_case}
    Recall from \Cref{thm:large_deviations} that $\scrL^n_\iy \in \calP(\calP^n([d]))$ is the stationary distribution of the empirical distribution $\kappa^n \in \{\mu^{\bar\Gamma}, \mu^{\Gamma_0}\}$ and let the $r \in \{\bar r,  r_0\}$ be the corresponding function from \Cref{lem:r_regular}. When $d=2$, $\scrL^n_\iy$ satisfies a large deviation principle with a simplified good rate function $s$  given by:
    \[
    s(\eta) = \tilde s(\eta) - \tilde s_{min},
    \] where:
    \begin{align}\label{eqn:Ys}
        \begin{split}
            &\tilde s(\eta) := \int_0^{\eta_1} \log(m(\lam)) d\lam, \\
            &\tilde s_{min} := \min_{\eta \in \calP([2])} \tilde s(\eta). 
        \end{split}
    \end{align} And in the case of $\kappa^n = \mu^{\bar\Gamma}$, we have explicit formulas for $\tilde s$ and $\tilde s_{min}$:
    \begin{align}\label{eqn:explicit_Y}
    \begin{split}
    & \tilde s(\eta) = \eta_1 \log\Big(\tfrac{\gamma^*_2(1,\Delta_1 \bar u)}{\gamma^*_1(2,\Delta_2 \bar u)}\Big) + \log(\eta_2) + \eta_1 \log(\eta_1/\eta_2),\\
    &\tilde s_{min} = \log\Big(\tfrac{\gamma^*_2(1,\Delta_1 \bar u)}{\gamma^*_1(2,\Delta_2 \bar u)}\Big) - \log\Big(\tfrac{\gamma^*_2(1,\Delta_1 \bar u)}{\gamma^*_1(2,\Delta_2 \bar u)} + 1\Big).
    \end{split}
    \end{align}
\end{theorem}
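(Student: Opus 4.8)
The plan is to exploit that, when $d=2$, the empirical measure $\kappa^n$ is a one--dimensional birth--death chain on the grid $\calP^n([2])\cong\{0,1/n,\dots,1\}$ (via $\eta\mapsto\eta_1$), for which the invariant law has a closed product form, and to read off the rate function directly from that formula; the explicit formulas in the $\bar\Gamma$ case are then an elementary integration and a one--variable minimisation. First I would record the birth--death description: the up/down rates at the state $k/n$ are $\bar b^{(n)}(k/n)$ and $\bar d^{(n)}(k/n)$ from \eqref{eqn:bar_b_d} when $\kappa^n=\mu^{\bar\Gamma}$, and $b^{(n)}(k/n),d^{(n)}(k/n)$ when $\kappa^n=\mu^{\Gamma_0}$. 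By the classical formula for birth--death chains \cite[Part~III, Chapter~14]{MR2906848},
\[
\scrL^n_\iy(\{k/n\})=Z_n^{-1}\prod_{j=1}^{k}\frac{b^{(n)}((j-1)/n)}{d^{(n)}(j/n)},\qquad Z_n:=\sum_{k=0}^{n}\prod_{j=1}^{k}\frac{b^{(n)}((j-1)/n)}{d^{(n)}(j/n)},
\]
with empty product equal to $1$ (and likewise with $\bar b^{(n)},\bar d^{(n)}$).

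Next I would take $-n^{-1}\log$ of this expression and identify the limit. Setting $\tilde s_n(k/n):=n^{-1}\sum_{j=1}^{k}\log\bigl(d^{(n)}(j/n)/b^{(n)}((j-1)/n)\bigr)$, the telescoping identity appearing in the proof of \Cref{lem:r_regular} — together with the Lipschitz continuity of $U_0$ from \Cref{prop:me_properties} in the $\Gamma_0$ case, used to absorb the $O(1/n)$ discrepancy between the arguments $(j-1)/(n-1)$ and $j/(n-1)$ of $U_0$ and the index mismatch between $b^{(n)}((j-1)/n)$ and $r(j/n)$ — shows that the partial sums of $\log\bigl(d^{(n)}(j/n)/b^{(n)}((j-1)/n)\bigr)-\log r(j/n)$ are $O(\log n)$ uniformly in $k$. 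Since $\log r\in L^1([0,1])$ with its only singularities coming from the monotone endpoint factor $\log(\lambda/(1-\lambda))$, the Riemann sums $n^{-1}\sum_{j=1}^k\log r(j/n)$ converge to $\int_0^{k/n}\log r(\lambda)\,d\lambda$ uniformly in $k$. Hence $\tilde s_n(k/n)\to\tilde s(\eta)$ locally uniformly as $k/n\to\eta_1$, and a Laplace/Varadhan argument gives $-n^{-1}\log Z_n\to-\min_{[0,1]}\tilde s=-\tilde s_{min}$, so that $-n^{-1}\log\scrL^n_\iy(\{k/n\})\to\tilde s(\eta)-\tilde s_{min}$ locally uniformly.

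This I would then upgrade to the LDP. Because all the state spaces lie inside the compact interval $[0,1]$ and each $\scrL^n_\iy$ has only $n+1$ atoms, polynomial prefactors are irrelevant at exponential scale, so the uniform convergence above yields the lower bound on open sets and the upper bound on closed sets, i.e.\ an LDP at speed $n$ with good rate function $\tilde s-\tilde s_{min}$ (which is nonnegative, vanishes only at $\bar\mu$, and is lower semicontinuous since $\tilde s$ is continuous); by uniqueness of the good rate function for an LDP this agrees with the one supplied by \Cref{thm:large_deviations}. For the explicit $\bar\Gamma$ formulas I would just compute: writing $\log\bar r(\lambda)=\log\frac{\gamma^*_2(1,\Delta_1\bar u)}{\gamma^*_1(2,\Delta_2\bar u)}+\log\lambda-\log(1-\lambda)$ and using the primitives $\int_0^{\eta_1}\log\lambda\,d\lambda=\eta_1\log\eta_1-\eta_1$ and $\int_0^{\eta_1}\log(1-\lambda)\,d\lambda=-\eta_1-(1-\eta_1)\log(1-\eta_1)$ gives $\tilde s(\eta)=\eta_1\log\frac{\gamma^*_2(1,\Delta_1\bar u)}{\gamma^*_1(2,\Delta_2\bar u)}+\eta_1\log\eta_1+\eta_2\log\eta_2$, which is the stated expression after the identity $\eta_1\log\eta_1+\eta_2\log\eta_2=\log\eta_2+\eta_1\log(\eta_1/\eta_2)$; differentiating in $\eta_1$ gives $\log\frac{\gamma^*_2(1,\Delta_1\bar u)}{\gamma^*_1(2,\Delta_2\bar u)}+\log\frac{\eta_1}{1-\eta_1}$, which vanishes at the interior point with $\eta_1/\eta_2=\gamma^*_1(2,\Delta_2\bar u)/\gamma^*_2(1,\Delta_1\bar u)$ (this is exactly $\bar\mu$ by the measure equation of \eqref{erg_MFG}, consistent with $s(\bar\mu)=0$), and plugging back, with $\rho:=\gamma^*_2(1,\Delta_1\bar u)/\gamma^*_1(2,\Delta_2\bar u)$, collapses to $\tilde s_{min}=\log\rho-\log(\rho+1)$.

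The step I expect to be the main obstacle is the passage from the Cesàro statement of \Cref{lem:r_regular} to \emph{uniform-in-$k$} control of $\tilde s_n(k/n)-\tilde s(\eta)$: this is what is actually needed both for the LDP bounds and for the normalisation $Z_n$, and it forces one to handle carefully the endpoint singularities of $\log(\lambda/(1-\lambda))$, the half-step index shifts between $b^{(n)},d^{(n)}$ and $r$, and — in the $\Gamma_0$ case — the dependence of $r_0$ on $U_0$, for which one leans on the Lipschitz bounds of \Cref{prop:me_properties} together with the fact that the rates stay in the compact set $[\mathfrak{a}_l,\mathfrak{a}_u]$ bounded away from $0$.
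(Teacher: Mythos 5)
Your proposal is correct, and it takes a genuinely different route from the paper on the LDP part. The paper treats the large deviation principle as a black box: \Cref{lem:r_regular} is there precisely to verify hypotheses (3.3a)--(3.3b) of Shwartz--Weiss \cite{MR1668135}, and then \cite[Theorem~3.1]{MR1668135} is invoked to deliver the rate function $s=\tilde s-\tilde s_{min}$; only the explicit integration and minimisation for $\bar\Gamma$ are carried out by hand (via integration by parts on $\log(\lambda/(1-\lambda))$, equivalent to your choice of primitives). You instead rederive the LDP from first principles: the product-form stationary law of the birth--death chain, the $-n^{-1}\log$ of the atoms, the telescoping/Riemann-sum identification of the limit, and the max-over-$(n+1)$-atoms bound for both $Z_n$ and the upper bound on closed sets. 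This is essentially a self-contained proof of the cited theorem in this special case. The one place where you go beyond what \Cref{lem:r_regular} literally states is the upgrade from its Ces\`aro (full-sum) statement to uniform-in-$k$ control of the partial sums; you flag this correctly, and it does hold, since for $\bar\Gamma$ the partial sums of $\log\bigl(d^{(n)}(j/n)/b^{(n)}((j-1)/n)\bigr)-\log\bar r(j/n)$ telescope to a quantity of size $O(\log n)$ uniformly in $k$, and for $\Gamma_0$ the extra per-term error is $O(1/n)$ by the Lipschitz bounds on $\gamma^*$ and $U_0$ together with the rates being bounded away from zero. Your approach buys transparency (it shows exactly why the conditions of \Cref{lem:r_regular} are the right ones) at the cost of length; the paper's buys brevity by outsourcing the LDP machinery. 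Your closing computations (the identity $\eta_1\log\eta_1+\eta_2\log\eta_2=\log\eta_2+\eta_1\log(\eta_1/\eta_2)$, the critical point at $\eta_1=(\rho+1)^{-1}$ coinciding with $\bar\mu$, and $\tilde s_{min}=\log\rho-\log(\rho+1)$) match the paper's.
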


\begin{proof}
    The purpose of Lemma \ref{lem:r_regular} is to satisfy the assumptions from \cite[(3.3a), (3.3b)]{MR1668135}. As a consequence, \cite[Theorem~3.1]{MR1668135} asserts that $s$ is the good rate function as claimed. It remains to show that, in the case of $\bar r$, we have an explicit form of $s$. 

    That is, we can integrate, using that $\log(ab) = \log(a) + \log(b)$, integration by parts, and the fact that $\eta_1 = 1-\eta_2$,
    \begin{align*}
    \tilde s(\eta) &= \int_0^{\eta_1} \log\Big( \tfrac{\gamma^*_2(1,\Delta_1 \bar u)}{\gamma^*_1(2,\Delta_2 \bar u)} \tfrac{\lam}{1-\lam}\Big) d\lam  \\
    &= \eta_1 \log\Big(\tfrac{\gamma^*_2(1,\Delta_1 \bar u)}{\gamma^*_1(2,\Delta_2 \bar u)} \Big) + \int_0^{\eta_1} \log\Big( \tfrac{\lam}{1-\lam}\Big) d\lam \\
    &= \eta_1 \log\Big(\tfrac{\gamma^*_2(1,\Delta_1 \bar u)}{\gamma^*_1(2,\Delta_2 \bar u)} \Big) + \log(1-\lam) + \lam\log\Big(\tfrac{\lam}{1-\lam}\Big) \Big|_0^{\eta_1} \\
    &= \eta_1 \log\Big(\tfrac{\gamma^*_2(1,\Delta_1 \bar u)}{\gamma^*_1(2,\Delta_2 \bar u)} \Big) + \log(\eta_2) + \eta_1 \log(\eta_1/\eta_2).
    \end{align*} Taking the derivative,
    \[
    \frac{\partial}{\partial \eta_1} \tilde s(\eta) = \log\Big( \tfrac{\gamma^*_2(1,\Delta_1 \bar u)}{\gamma^*_1(2,\Delta_2 \bar u)} \tfrac{\eta_1}{1-\eta_1}\Big).
    \] Therefore, we can find that $\tilde s$ has a critical point at:
    \[
    \eta =(\eta_1,\eta_2), \quad \text{ where } \quad \eta_1 = \Big(\tfrac{\gamma^*_2(1,\Delta_1 \bar u)}{\gamma^*_1(2,\Delta_2 \bar u)} + 1\Big)^{-1}, \quad \eta_2 = 1-\eta_1.
    \] By the first derivative test, this point is the minimum of $\tilde s$. Plugging this point into $\tilde s$ and by some arithmetic, 
    \[
    \tilde s_{min} = \log\Big(\tfrac{\gamma^*_2(1,\Delta_1 \bar u)}{\gamma^*_1(2,\Delta_2 \bar u)}\Big) - \log\Big(\tfrac{\gamma^*_2(1,\Delta_1 \bar u)}{\gamma^*_1(2,\Delta_2 \bar u)} + 1\Big).
    \]
\end{proof}

In the numerical case we visited in the prior section, this means that the good rate function for the $\bar \Gamma$-system is given by:
\[
s(\eta) = \log(\eta_2) + \eta_1 \log(\eta_1/\eta_2) + \log(2).
\] Unfortunately, the good rate function associated with the $\Gamma_0$-system is not explicitly solvable since the integral contains $U_0$. Nonetheless, since we approximately solved for $U_0$ using \Cref{alg:ergodic_DGM}, we can use Monte Carlo integration to approximate $\tilde s$ in that case and thus obtain the good rate function. In Figure \ref{fig:rate_fns}, we compare the good rate functions for $\bar \Gamma$ and $\Gamma_0$.

\begin{figure}
\begin{tabular}{cc}
\includegraphics[width=70mm]{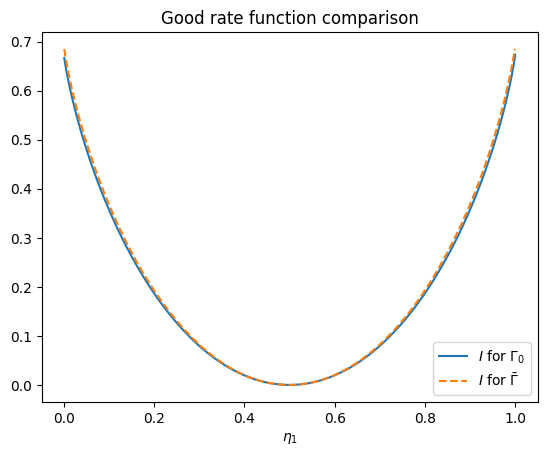}  & \includegraphics[width=73mm]{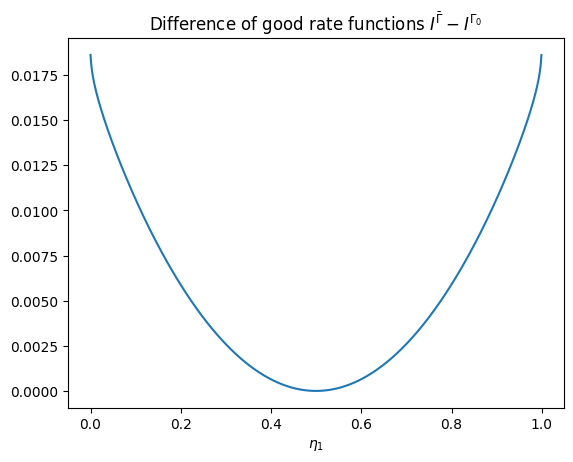} \\ 
\end{tabular}
\caption{On the left-hand side, we compare the good rate functions associated with the invariant measures for $\mu^{\Gamma_0}$ and $\mu^{\bar\Gamma}$. \Cref{thm:special_case} gives the rate function associated with $\mu^{\bar\Gamma}$ while \Cref{alg:ergodic_DGM} and Monte Carlo integration obtain the rate function for $\mu^{\Gamma_0}$. On the right, we highlight the difference between the rate functions. We note that the graph on the right-hand side implies that playing the profile $\bar\Gamma$ is associated with larger deviations as compared with $\Gamma_0$, and that the distinction is most pronounced toward the boundary of $\calP([2])$.}
\label{fig:rate_fns}
\end{figure}

\section{Appendix: Proof of Lemma \ref{lem:freedom}}

    Note that $(\bX^{\bbeta}_t)_{t\ge 0}$ is a Markov process. Denote its distributional law at time $t$ by $(\calL(\bX^{\bbeta}_t)_{\bx}=\PP(\bX^{\bbeta}_t=\bx))_{\bx\in[d]^n}$. Then, it satisfies the Kolmogorov equation:
    \[
    \frac{d}{dt} \calL(\bX^{\bbeta}_t)_{\bx} = \sum_{\by \in [d]^n} \calL(\bX^{\bbeta}_t)_{\by} Q^{\bbeta}_{\by \bx} (t, \bX^{\bbeta}_t), \quad (t,\bx) \in \R_+ \times [d]^n,
    \] where $Q^{\bbeta}_{\by \bx} (t, \bX^{\bbeta}_t)$ is the rate matrix:
    \[
    Q^{\bbeta}_{\by \bx} (t, \bX^{\bbeta}_t) :=
    \begin{cases}
        0, & \by \neq \bx + e_y - e_x, \text{ for any } y\neq x, \\
        \sum_{i\in [n]} \beta^i_{xy}(t, \bX^{\bbeta}_t) \1_{\{X^{\bbeta, i}_t = x\}}, & \by = \bx + e_y - e_x, \text{ for some } y\neq x, \\
        \sum_{i\in [n]} \beta^i_{xx}(t, \bX^{\bbeta}_t) \1_{\{X^{\bbeta, i}_t = x\}}, & \by = \bx.
    \end{cases}
    \] 
    \blue{We note that \(\boldsymbol{\beta} \in \mathcal{A}^n\). According to Assumption (A1), all rates are uniformly bounded away from zero. This condition is sufficient for applying \cite[Lemma~3.3]{CZ2022}, which allows for the rates to be time-dependent. Hence, } 
\begin{align}\label{eqn:exp_ergodic}
    |\calL(\bX^{\bbeta}_t \mid \bX^{\bbeta}_0 \sim \pi_0) - \calL(\bX^{\bbeta}_t \mid \bX^{\bbeta}_0 \sim \pi_1)| \leq K_n e^{-k_n t} |\pi_0 - \pi_1|,
    \end{align} where $K_n, k_n$ are positive constants depending only on $\A$ and $n$. 
    \blue{To illustrate this intuitively, consider two time-inhomogeneous Markov chains, each starting from a different initial state. Because the transition rates are bounded away from zero, these chains will meet exponentially fast. Once they meet, they can be coupled to evolve together. Now, the dependence of the exponential rate above on \( n \) is not an issue, as \( g \) is bounded and we are considering a long-time average. This is what we show in the sequel.
}
\begin{align}\label{eqn:irrelevance_original}
        \begin{split}
            \limsup_{T\to\iy} \frac{1}{T} \E_{\pi_0} &\int_0^T g(\bX^{\bbeta}_t) dt = \limsup_{T\to\iy} \frac{1}{T} \int_0^T \sum_{\bx\in [d]^n} g(\bx) \calL(\bX^{\bbeta}_t \mid \bX^{\bbeta}_0 \sim \pi_0 )_{\bx} dt \\ 
            &= \limsup_{T\to\iy} \frac{1}{T} \int_0^T \sum_{\bx\in [d]^n} g(\bx) \big[\calL(\bX^{\bbeta}_t \mid \bX^{\bbeta}_0 \sim \pi_0 )_{\bx} - \calL(\bX^{\bbeta}_t \mid \bX^{\bbeta}_0 \sim \pi_1 )_{\bx} \big] dt \\
            &\quad + \limsup_{T\to\iy} \frac{1}{T} \int_0^T \sum_{\bx\in [d]^n} g(\bx)  \calL(\bX^{\bbeta}_t \mid \bX^{\bbeta}_0 \sim \pi_1 )_{\bx}  dt \\
            &= \limsup_{T\to\iy} \frac{1}{T} \int_0^T \sum_{\bx\in [d]^n} g(\bx) \big[\calL(\bX^{\bbeta}_t \mid \bX^{\bbeta}_0 \sim \pi_0 )_{\bx} - \calL(\bX^{\bbeta}_t \mid \bX^{\bbeta}_0 \sim \pi_1 )_{\bx} \big] dt \\
            &\quad +\limsup_{T\to\iy} \frac{1}{T} \E_{\pi_1} \int_0^T g(\bX^{\bbeta}_t) dt.
        \end{split}
    \end{align}
But then by triangle inequality, the boundedness of $g$, and finally \eqref{eqn:exp_ergodic},
\begin{align}
    \begin{split}
        \Big| \limsup_{T\to\iy} \frac{1}{T} \int_0^T &\sum_{\bx\in [d]^n} g(\bx) \big[\calL(\bX^{\bbeta}_t \mid \bX^{\bbeta}_0 \sim \pi_0 )_{\bx} - \calL(\bX^{\bbeta}_t \mid \bX^{\bbeta}_0 \sim \pi_1 )_{\bx} \big] dt \Big| \\
        &\leq \limsup_{T\to\iy} \frac{1}{T} \int_0^T \sum_{\bx\in [d]^n} g(\bx) \big|\calL(\bX^{\bbeta}_t \mid \bX^{\bbeta}_0 \sim \pi_0 )_{\bx} - \calL(\bX^{\bbeta}_t \mid \bX^{\bbeta}_0 \sim \pi_1 )_{\bx} \big| dt \\ 
        &\leq C_g \limsup_{T\to\iy} \frac{1}{T} \int_0^T \sum_{\bx\in [d]^n}  \big|\calL(\bX^{\bbeta}_t \mid \bX^{\bbeta}_0 \sim \pi_0 )_{\bx} - \calL(\bX^{\bbeta}_t \mid \bX^{\bbeta}_0 \sim \pi_1 )_{\bx} \big| dt \\
        &\leq C_g \limsup_{T\to\iy} \frac{1}{T} \int_0^T K_n e^{-k_n t}|\pi_0 - \pi_1| dt \\
        &=0.
    \end{split}
\end{align} 
Using this fact in \eqref{eqn:irrelevance_original} concludes the proof. $\square$

%
%
%

 \blue{\section*{Acknowledgments.}
We thank the referees and associate editor for their time, energy, and valuable comments
which greatly improved the quality of our paper.}

\footnotesize
\bibliographystyle{abbrv} 
\bibliography{mor} 

\end{document}